\def\dOi{13(3:34)2017}
\keywords{arithmetical proof ; strong normalization ; classical logic ; $\ls$-calculus ;  $\lmts$-calculus ; zoom-in sequences of redexes}
\theoremstyle{plain} 
\newcommand{\m}{\mu}
\newcommand{\mut}{\tilde{\mu}}
\newcommand{\ma}{\mathcal}
\newcommand{\lmp}{\l \mu \mu'}
\newcommand{\lbe}{\l_{\b \pi \eta}}
\newcommand {\ls}{{\lambda}^{\tiny{\textit{Sym}}}_{\tiny{\textit{Prop}}}}
\newcommand{\lmt}{\overline{\lambda}\mu\tilde{\mu}}
\newcommand{\lmts}{\overline{\lambda}\mu\tilde{\mu}^*}
\def\a{\alpha}
\def\b{\beta}
\def\bb{\b^{\bot}}
\def\pib{\pi^{\bot}}
\def\l{\lambda}
\def\r{\rho}
\def\lm{\l \mu}
\def\lmp{\l \mu \mu'}
\def\etb{\eta_\bot}
\def\wt{\widetilde}
\def\ra{\rightarrow}
\def\raa{\hookrightarrow}
\def\reb{\ra_\b}
\def\rebb{\ra_{\b^{\bot}}}
\def\ree{\ra_e}
\def\rebn{\ra_{\b_0}}
\def\rep{\ra_\pi}
\def\sta{\star}
\def\lan{\langle}
\def\ran{\rangle}
\def\lfl{\lfloor}
\def\rfl{\rfloor}
\def\mf{\mathfrak}
\def\si{\sigma}
\def\r{\rho}
\def\t{\tau}
\def\v{\vdash}
\def\G{\Gamma}
\def\S{\Sigma}
\def\D{\triangle}
\def\F{\displaystyle\frac}
\def\vv{\rhd}
\begin{document}

\title[Strong normalization of $\ls$- and $\lmts$-calculi]{Strong normalization of $\ls$- and $\lmts$-calculi}

\author[P.~Batty\'anyi]{P\'eter Batty\'anyi\rsuper a}	
\address{{\lsuper a}Department of Computer Science, Faculty of Informatics,
University of Debrecen, Kassai \'ut 26, 4028 Debrecen, Hungary}	
\email{battyanyi.peter@inf.unideb.hu}  

\author[K.~Nour]{Karim Nour\rsuper b}	
\address{{\lsuper b}LAMA - \'Equipe LIMD,
Universit\'e Savoie Mont Blanc,
73376 Le Bourget du Lac}	
\email{karim.nour@univ-smb.fr}  





\begin{abstract}
  \noindent In this paper we give an arithmetical proof of the strong normalization of $\ls$ of Berardi and Barbanera
\cite{Ber-Bar}, which can be considered as a formulae-as-types translation of classical propositional
logic in natural deduction style.
Then we give a translation between the $\ls$-calculus and the $\lmts$-calculus, which is the implicational part of
the $\lmt$-calculus invented by Curien and Herbelin \cite{Cur-Her} extended with negation.
In this paper we adapt the method of David and Nour \cite{Dav-Nou3} for proving strong normalization.
The novelty in our proof is the notion of zoom-in sequences of redexes, which leads us directly to the proof of the main theorem.
\end{abstract}

\maketitle

\section*{Introduction}\label{S:one}

  It was revealed by the works of Murthy \cite{Mur} and Griffin \cite{Gri} that the Curry-Howard isomorphism,
{which establishes a correspondence between natural deduction style proofs in intuitionistic logic and terms
of the typed $\l$-calculus},  can be extended to the case of classical logic, as well. Since their discovery many
calculi appeared aiming to give an encoding of proofs formulated either
in classical natural deduction or in classical sequent calculus.

The $\lm$-calculus presented by Parigot in \cite{Par5} finds its origin in the so called Free Deduction (FD).
Parigot resolves the deterministic nature of intuitionistic natural deduction: unlike in the case of intuitionistic natural
deduction, when eliminating an instance of a cut in FD, there can be several choices for picking out the subdeductions to be
transformed. By introducing variables of a new kind, the so called $\m$-variables, {Parigot distinguishes formulas that}
are not active at the moment but the current continuation can be passed over to them. Besides the usual $\b$-reduction,
Parigot introduces a new reduction rule called the $\m$-rule corresponding to structural cut eliminations made necessary
by the occurrence of new forms of cuts due to the rule in connection with the $\m$-variables. The result is a calculus,
the $\lm$-calculus (Parigot \cite{Par1}), which is in relation with classical natural deduction.
The $\m'$-rule is the symmetric counterpart of the $\m$-rule. It was introduced by Parigot \cite{Par2} with the intention
of keeping the unicity of representation of data (Nour \cite{Nou1}), {the price was, however, that confluence had been lost}.
In the presence of other simplification rules besides $\m$ and $\m'$, even the strong normalization property is lost 
(Batty\'anyi \cite{Batt}).

Historically, the first calculus reflecting the symmetry of classical propositional logic was the $\ls$-calculus of Berardi
and Barbanera \cite{Ber-Bar} establishing a formulae-as-types connection with natural deduction in classical logic.
The calculus $\ls$ {uses an involutive negation which is not defined as} $A\ra \bot$. {There are negated and non-negated atomic types},
and the main connective is not the arrow but the classical $\wedge$ and $\vee$. Berardi and Barbanera make use of the natural symmetry 
of classical logic expressed by the de Morgan laws in defining negated types. In their paper,
Berardi and Barbanera proved that $\ls$ is strongly normalizable with a symmetric version of the Tait-Girard reducibility
method (Tait \cite{Tai}). In this paper, leaning on the combinatorial proof applied by David and Nour in \cite{Dav-Nou4},
we prove that $\ls$ is strongly normalizing. The novelty in our proof is the application of so-called zoom-in sequences of redexes,
which was inspired by the work in Raamsdonk et al. \cite{Sor}. We prove strong normalizability by verifying that it is closed under 
substitution.
From the assumption that $U[x:=V]$ is strongly normalizing and $U$, $V$ are strongly normalizing, we can identify a subterm $U'$
of a reduct of $U$ such that $U'[x:=V]$ also is strongly normalizing. The reduction sequence leading to $U'$ is a so-called
zoom-in sequence of redexes: each subsequent element is a subterm of the one-step reduct of the preceding one. We prove that
zoom-in sequences have useful invariant properties,
which makes it relatively easy for us to set the stage for the main theorem.
Due to its intrinsic symmetry in dealing with the typing relation, the $\ls$-calculus also proves to be very close to the calculus
named by Nour as classical combinatory logic (CCL). Nour \cite{Nou} defined a calculus of combinators which is equivalent to the
full classical propositional logic in natural deduction style. Then a translation is given in both directions between $\ls$ and CCL.

Curien and Herbelin introduced the $\lmt$-calculus (Curien et al. \cite{Cur-Her}), which established a correspondence, via the 
Curry-Howard isomorphism, between classical Gentzen-style sequent calculus {and a logical calculus}. The $\lmt$-calculus
possesses a rather strong symmetry: it has right-hand side and left-hand side terms (also referred to as environments).
The strong normalization of the calculus was proved by Polonovski \cite{Poi}, and a proof formalizable in first order Peano
arithmetic was found{ by David and Nour \cite{Dav-Nou3}.}

As to the connection between the $\lm$ and the $\lmt$-calculus, Curien and Herbelin \cite{Cur-Her} defined a translation both
for the call-by-value and the call-by-name part of the $\l \mu$-calculus into the $\lmt$-calculus. Rocheteau \cite{Roc} finished
this work by defining
simulations between the two calculi in both directions. In this paper we define the $\lmts$-calculus, which is the $\lmt$-calculus
extended with negation, and we describe translations between the $\lmts$-calculus and the $\ls$-calculus. As a consequence,
 we obtain that, if one of the calculi is strongly normalizable, then the other one necessary admits this property.

The proof applied in the paper is an adaptation of that of David and Nour \cite{Dav-Nou3}. David and Nour \cite{Dav-Nou3} gave 
arithmetic proofs, that is, proofs formalizable in first-order Peano arithmetic, for the strong normalizability of the $\lmt$- and 
Parigot's symmetric $\lm$-calculus. It is demonstrated that the set of strongly normalizable terms are closed under substitution.
The goal is achieved by applying implicitly an alternating substitution to find out which part of the substitution would be
responsible for being not strongly normalizable provided the basis of the substitution and the terms written in are strongly
normalizable. In this paper we reach the same goal by identifying a minimal non strongly normalizing sequence of redexes provided
an infinite reduction sequence is given. We call this sequence of redexes a minimal zoom-in reduction sequence. The idea of zoom-in
sequence was inspired by Raamsdonk et al. \cite{Sor}, where perpetual reduction strategies are defined in order to locate the minimal 
non strongly normalizing subterms of the elements of an infinite reduction sequence. Again, alternating substitutions are defined 
inductively starting from two sets of terms, and it is proven that zoom-in reduction sequences do not lead out of these substitutions.
With this in hand, the method of David and Nour \cite{Dav-Nou3} can be applied.

We prove the strong normalization of the \smash{$\ls$}-calculus, though our proof works with some slight modification in the
case of the $\lmt$-calculus, as well (Batty\'anyi \cite{Batt}). However, instead of repeating the proof here, we give a translation of
the \smash{$\ls$}-calculus into the $\lmt$-calculus, and vice versa. In fact, to make the connection more visible we define the
 $\lmts$-calculus, which is the $\lmt$-calculus extended with terms expressing negated types. Hence, we also obtain a new proof
of strong normalization of the $\lmt$-calculus.

The paper is organized as follows. In the first section we introduce the \smash{$\ls$}-calculus of Berardi and Barbanera,
and, as the first step towards strong normalization, prove that the permutation rules can be postponed. In the next section
we show that the $\b$, $\b^\bot$, $\pi$ and $\pi^\bot$ rules together are strongly normalizing. Section \ref{section3} introduces
the $\lmt$-calculus defined by Curien and Herbelin, and we augment the calculus with negation in order to make the comparison
of the \smash{$\ls$}- and the $\lmt$-calculi simpler. Section \ref{section4} provides translations between the \smash{$\ls$}- and
the $\lmts$-calculi such that the strong normalization of one of the calculi implies that of the other.
The last section contains conclusions with regard to the results of the paper.

\section{The $\ls$-calculus}

The ${\lambda }^{{\tiny\textit{Sym}}}$-calculus was
introduced by Berardi and Barbanera \cite{Ber-Bar}. It is
organized entirely around the duality in classical logic.
It has a negation ``built-in'': the negation of $A$ is not
defined as $A\ra\bot$. Each type is rather related to its
natural negated type by the notion of duality
introduced by negation in classical logic. In fact, Berardi and
Barbanera defined a calculus equivalent to first order Peano
arithmetic. However, we only consider here its propositional part,
denoted by $\ls$, since all the other calculi treated by us in
this work are concerned with propositional logic.

\begin{defi}\label{int:typels}
The set of types are built from two sets of base
types
$\mathcal{A}=\{ a,b,\ldots \}$ (atomic types) and
${\mathcal A}^{\bot} =\{ a^{\bot},b^{\bot},\ldots \}$ (negated
atomic types).

\begin{enumerate}
\item The set of m-types is defined by the following grammar
$$A := \a \mid {\a}^{\bot} \mid A\wedge A\mid A\vee A$$
where $\a $ ranges over $\mathcal{A}$ and ${\a}^{\bot}$ over
${\mathcal A}^{\bot}$.

\item The set of types is defined by the following grammar
$$C := A\mid \bot.$$
\item We define the negation of an m-type as follows

\begin{tabular}{ l l l l}
$(\a )^{\bot}=\a ^\bot \;$   & $(\a^{\bot})^{\bot}=\a\;$ &
$(A\wedge B)^{\bot}=A^{\bot}\vee B^{\bot}\;$   & $(A\vee B)^{\bot}=A^{\bot}\wedge B^{\bot}.$
 \end{tabular}\\
In this way we get an involutive negation,
i.e. for every $m$-type $A$, $(A^{\bot})^{\bot}=A$.
\newpage

\item The complexity of a type is defined inductively as follows.
\begin{itemize}
\item[] $cxty(A)=0$, if $A \in \mathcal{A} \cup \mathcal{A}^{\bot} \cup \{ \bot \}$.
\item[] $cxty(A_{1}\wedge A_{2})=cxty(A_{1}\vee A_{2})=cxty(A_{1})+cxty(A_{2})+1$.
\end{itemize}
Then, for every $m$-type $A$, $cxty(A)=cxty(A^{\bot})$.
\end{enumerate}

\end{defi}

\begin{defi}\hfill
\begin{enumerate}

\item We denote by $Var$ the set of term-variables.
The set of terms ${\mathcal T}$ of the $\ls$-calculus together with their typing rules are defined as
follows. In the definition below the type of a variable must be an
$m$-type and $\G$ denotes a context (the set of declarations of variables).

$$\hspace{.7cm} var\;\;\; \F{}{\G, x:A\;\v\;x:A}\vspace{.1cm}$$

\begin{tabular}{l l}
$\langle \: ,\rangle\;\;\; \F
{\G\;\v\;P_{1}:A_1\;\;\;\;\; \G\;\v\;P_{2}:A_2}{\G\;\v\;\lan P_{1},P_{2}\ran :A_{1}\wedge A_{2}}$ & $\;\;\;
\sigma_{i}\;\;\; \F {\G\;\v\;P_{i}:A_{i}}{\G\;\v\;{\sigma_{i}}\; (P_{i}):A_{1}\vee A_{2}}\;\;i\in\{1,2\}$\vspace{.4cm}\\

$\lambda \;\;\; \F {\G, x:A\;\v\;P:\bot}{\G\;\v\;\l
xP:A^{\bot}}$ & $\;\;\;\;
\star\;\;\; \F {\G\;\v\;P_{1}: A^{\bot}\;\;\;\;\;\G\;\v\;P_{2}:A}{\G\;\v\;(P_{1}\star P_{2}):\bot}$
\end{tabular}
\item We say that $M$ has type $A$, if there is a context $\Gamma$ such that $\Gamma\;\v\;M:A$. 
We consider $A$ as fixed for a certain element $\Gamma\;\v\;M:A$ of the typability relation.
\item As usual, we denote by $Fv(M)$, the set of the free variables of the term $M$.
\item The complexity of a term of ${\mathcal T}$ is defined as follows.
\begin{itemize}
\item[] $cxty(x)=0$,
\item[] $cxty(\lan  P_1 , P_2\ran)=   cxty((P_1\sta P_2)) =  cxty(P_1)+cxty(P_2)$,
\item[] $cxty(\l xP)=  cxty(\si_i(P)) = cxty(P)+1$, for $i\in \{ 1,2\}$.
\end{itemize}
\end{enumerate}
\end{defi}

\begin{defi}\hfill
\begin{enumerate}

\item The reduction rules are enumerated below.

\begin{tabular}{ l l l l l}
$(\b)$ & $(\l xP\star Q)$ & $\rightarrow_{\b}$ & $P[x:=Q]$ & $\;$ \\
$(\b^{\bot})$ & $(Q \star \l xP)$ & $\rightarrow_{\b^{\bot}}$ & $P[x:=Q]$ & $\;$ \\
$(\eta)$ & $\l x(P\star x)$ & $\rightarrow_{\eta}$ & $P$ & if $x\notin Fv(P)$\\
$(\eta^{\bot})$ & $\l x(x\star P)$ & $\rightarrow_{\eta^{\bot}}$ & $P$ & if $x\notin Fv(P)$\\
$(\pi)$ & $(\langle P_{1},P_{2}\rangle \star \si_{i}(Q_{i}))$ & $\rightarrow_{\pi}$ &  $(P_{i}\star Q_{i})$ & $i
 \in \{1,2\}$\\
$(\pi^{\bot})$ & $(\si_{i}(Q_{i})\star \langle P_{1},P_{2}\rangle)$ & $\rightarrow_{\pi^{\bot}}$ & $(Q_{i}\star
P_{i})$ & $i \in \{1,2\}$\\
$(Triv)$ & $E[P]$ & $\rightarrow_{Triv}$ & $P$ & $(*)$\\
\end{tabular}\medskip

\noindent{\rm (*)} If $E[-]$ is a context with type $\bot$ and $E[-]\neq[-]$, $P$ has type $\bot$ and $E[-]$ does not bind
any free variables in $P$.

\item Let us take the union of the above rules.
Let $\ra$ stand for the compatible closure of this union and, as usual, $\ra^*$ denote the reflexive, symmetric and transitive
closure of $\ra$. The notions of reduction sequence, normal form and normalization are defined with respect to $\ra$.

\item Let $M,N$ be terms. Assume $M \ra^* N$. The length (i.e. the number of steps) of the reduction $\ra^*$ is 
denoted by $lg(M \ra^* N)$.
\end{enumerate}

\end{defi}

\noindent We enumerate below some theoretical properties of the $\ls$-calculus following Berardi and Barbanera \cite{Ber-Bar} and de Groote \cite{de Gro2}.\newpage

\begin{prop}[Type-preservation property]
If $\G\;\v\; P:A$ and $P \ra^* Q$, then $\G \;\v\; Q:A$.
\end{prop}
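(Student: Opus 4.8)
The plan is to reduce the statement to the one-step case and then to proceed by a case analysis on the redex contracted. Since $\ra^*$ is the reflexive-transitive closure of $\ra$, a straightforward induction on $lg(P \ra^* Q)$ reduces the claim to showing that $\G\;\v\;P:A$ together with a single step $P \ra Q$ implies $\G\;\v\;Q:A$. Because $\ra$ is the compatible closure of the seven rules, I would prove this single-step statement by induction on the term structure: the inductive cases, in which the contracted redex lies strictly inside $\lan P_1,P_2\ran$, $\si_i(P)$, $\l xP$ or $(P_1\sta P_2)$, follow at once from the induction hypothesis applied to the affected immediate subterm, since the introduction of each constructor is governed by a single typing rule that determines the type of the whole from the types of the parts. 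This leaves the base cases, one for each reduction rule.

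For the base cases the essential tool is a substitution lemma: if $\G, x:A\;\v\;P:C$ and $\G\;\v\;Q:A$, then $\G\;\v\;P[x:=Q]:C$. I would establish it by induction on the derivation of $\G, x:A\;\v\;P:C$ (equivalently, on the structure of $P$), the only delicate point being the $var$ case for the variable $x$ itself. With the lemma in hand, the $\b$ and $\b^{\bot}$ cases are handled by inverting the $\sta$ and $\l$ rules: for $(\l xP\sta Q)\reb P[x:=Q]$ the typing forces $\l xP:A^{\bot}$ and $Q:A$ for some $m$-type $A$, hence $\G, x:A\;\v\;P:\bot$, and the lemma yields $\G\;\v\;P[x:=Q]:\bot$, which is the required type. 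The $\eta$ and $\eta^{\bot}$ cases use inversion together with the side condition $x\notin Fv(P)$ to strengthen $\G, x:A\;\v\;P:A^{\bot}$ to $\G\;\v\;P:A^{\bot}$. For $\pi$ and $\pi^{\bot}$, inverting the $\lan,\ran$ and $\si_i$ rules produces $P_i:A_i$ and $Q_i:A_i^{\bot}$, so that the reduct $(P_i\sta Q_i)$ is again well-typed of type $\bot$. The $Triv$ case is immediate, since its side condition already asserts that $P$ has type $\bot$ and that the erased context binds no free variable of $P$.

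The main obstacle is the bookkeeping with the involutive negation in the symmetric rules $\b^{\bot}$ and $\pi^{\bot}$. In $(Q\sta \l xP)\rebb P[x:=Q]$ the $\sta$ rule types its first argument with a negated type and its second with the corresponding positive type, so one must read off $Q:A^{\bot}$ and $\l xP:A$; inverting the $\l$ rule then gives $\G, x:A^{\bot}\;\v\;P:\bot$, where the identity $(A^{\bot})^{\bot}=A$ from Definition \ref{int:typels} is exactly what makes the types of $Q$ and of the bound variable match. The same involution is what lines up the components in $\pi^{\bot}$. Once this duality is tracked correctly, the substitution lemma applies verbatim and the remaining verifications reduce to routine equalities of types.
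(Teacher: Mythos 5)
Your proposal is correct, but there is nothing in the paper to compare it against: the paper states this proposition without proof, as one of several ``theoretical properties'' quoted from Berardi--Barbanera \cite{Ber-Bar} and de Groote \cite{de Gro2}. What you give is the standard self-contained subject-reduction argument, and it does go through: the typing system is syntax-directed, so every inversion step you invoke is immediate; the substitution lemma (together with the weakening needed in its binder cases, which you should acknowledge alongside the $var$ case) disposes of $\b$ and $\b^{\bot}$; a strengthening lemma (a hypothesis on a variable not free in the subject can be dropped) disposes of $\eta$, $\eta^{\bot}$ and $Triv$ --- note that $Triv$ is not quite ``immediate'', since the side condition types $P$ in the context extended by the binders of $E[-]$, and you need strengthening to return to $\G$; and involutivity $(A^{\bot})^{\bot}=A$ is indeed the only delicate point in the symmetric rules, exactly as you describe. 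One further remark: the paper literally defines $\ra^*$ as the ``reflexive, \emph{symmetric} and transitive closure'' of $\ra$, under which the proposition would be false --- e.g.\ with $z:c$, $w:a$ one has $\G\v z:c$ and $(\l xz\star(w\star w))\ra_{\b} z$, yet $(\l xz\star(w\star w))$ is untypable, so $z \ra^* (\l xz\star(w\star w))$ under the symmetric closure breaks preservation. Your tacit reading of $\ra^*$ as the reflexive--transitive closure, forced by your induction on $lg(P\ra^* Q)$, is clearly the intended one; the ``symmetric'' in the paper's definition is best regarded as a typo. In sum: the paper buys brevity by citation, your argument buys self-containedness, and it is the argument one would find in the cited sources.
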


\begin{prop}[Subformula property]
If $\Pi$ is a derivation of $\G \;\v\; P:A$ and $P$ is in normal
form, then every type occurring in $\Pi$ is a subformula of a type
occurring in $\G$, or a subformula of $A$.
\end{prop}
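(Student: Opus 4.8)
The plan is to argue by structural induction on the normal term $P$ (equivalently, on the derivation $\Pi$), maintaining the invariant that every type labelling a node of $\Pi$ is a subformula of a type in $\G$ or a subformula of $A$. For this to be the right statement one must fix the appropriate notion of subformula for $\ls$: since $cxty(B)=cxty(B^{\bot})$ and the connectives are exchanged by the de Morgan clauses of Definition~\ref{int:typels}, I read the subformula relation as closed under $(\cdot)^{\bot}$, so that $B$ and $B^{\bot}$ have corresponding subformulas, and I also regard $\bot$ as a subformula of every type, it being the common conclusion of the $\l$- and $\sta$-rules. With these conventions the introduction cases are routine.

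Concretely, if $P=x$ then $\Pi$ is a single $var$ node whose only type $A$ is a type of $\G$. If $P=\lan P_{1},P_{2}\ran$ with $A=A_{1}\wedge A_{2}$, or $P=\si_{i}(P_{i})$ with $A=A_{1}\vee A_{2}$, then each premise type $A_{j}$ is a subformula of $A$, so applying the induction hypothesis to the immediate subderivations yields the claim. If $P=\l xQ$ with $A=B^{\bot}$, then the subderivation types $Q:\bot$ in the context $\G,x:B$; by the induction hypothesis every type there is a subformula of $\bot$, of a type in $\G$, or of $B$, and since $B$ is a subformula of $B^{\bot}=A$ under the dual-closed convention, the invariant is preserved.

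The only delicate case is a cut $P=(P_{1}\sta P_{2})$, where $A=\bot$, $\G\v P_{1}:C^{\bot}$ and $\G\v P_{2}:C$ for some $m$-type $C$. Here the premise types $C^{\bot},C$ are not subformulas of $\bot$, so I must anchor them to $\G$, and this is exactly where normality is used. Because $(P_{1}\sta P_{2})$ is in normal form, neither $P_{1}$ nor $P_{2}$ is a $\l$-abstraction (else a $\b$- or $\b^{\bot}$-redex), and we cannot have one side a pair and the other an injection (else a $\pi$- or $\pi^{\bot}$-redex). I then observe that $P_{1}$ and $P_{2}$ can be neither both pairs nor both injections: if, say, both were pairs, then $C^{\bot}=D_{1}\wedge D_{2}$ while $C=E_{1}\wedge E_{2}$, yet $C=(C^{\bot})^{\bot}=D_{1}^{\bot}\vee D_{2}^{\bot}$ is a disjunction, a contradiction; the two-injection case is symmetric. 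Hence at least one of $P_{1},P_{2}$ must be a variable.

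Assuming without loss of generality that $P_{1}=x$, its type $C^{\bot}$ is a type of $\G$, and then $C=(C^{\bot})^{\bot}$ is a subformula of $C^{\bot}$ by dual-closure, so both premise types are subformulas of types in $\G$. Applying the induction hypothesis to the derivation of $P_{2}:C$ shows that every type occurring there is a subformula of $C$ or of a type of $\G$, hence of a type of $\G$; the same holds trivially for the $var$ leaf $x$. This closes the cut case and the induction. I expect the cut case to be the sole obstacle: the crux is the combined use of normality, to exclude $\l$-abstractions and the pair/injection clash, and of the de Morgan duality, to exclude the two-pair and two-injection configurations, which together force a variable onto one side of every normal cut and thereby tie the cut types back to the context.
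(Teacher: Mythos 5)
The paper itself offers no proof of this proposition: it is listed among the known properties of the $\ls$-calculus, quoted from Berardi and Barbanera \cite{Ber-Bar} and de Groote \cite{de Gro2}. So your argument must be judged on its own merits, and it is essentially correct and self-contained. Your two conventions are not optional embellishments but exactly what makes the statement true: in the normal term $\l x(z\sta \si_1(x))$ with $z:B^{\bot}\wedge D^{\bot}$ in $\G$, the type $B\vee D$ of $\si_1(x)$ is the dual, not a literal subformula, of a type in $\G$ or of the conclusion, so the subformula relation must indeed be closed under $(\cdot)^{\bot}$; and $\bot$, which labels every application node, must be admitted as well. Your handling of the cut case --- no $\l$-abstraction on either side by $\b/\b^{\bot}$-normality, no pair/injection clash by $\pi/\pi^{\bot}$-normality, no pair/pair or injection/injection configuration by the de Morgan duality of the two premise types $C^{\bot}$ and $C$, hence a variable on one side anchoring both premise types to $\G$ --- is precisely the one delicate point of the induction.

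One small hole in the case analysis: you never exclude the possibility that $P_1$ or $P_2$ is itself an application $(Q_1\sta Q_2)$. This is immediate --- applications are the only terms to which the rules assign type $\bot$, whereas the premises of the $\sta$-rule carry the m-types $C^{\bot}$ and $C$ --- but it must be said, since without it the conclusion ``at least one of $P_1,P_2$ is a variable'' does not follow from the cases you enumerated.
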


\begin{thm}[Strong normalization]\label{SN1}
If $\G \;\v\; P:A$, then $P$ is strongly normalizable, i.e. every reduction sequence starting from $P$ is finite.
\end{thm}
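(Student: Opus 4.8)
The plan is to reduce the strong normalization of the full reduction relation to that of its computational core, the rules $\b$, $\bb$, $\pi$ and $\pib$. Using the postponement of the permutation rules (the first step of the development), any infinite $\ra$-reduction starting from a typed term can be rearranged so that the remaining rules $\eta$, $\eta^\bot$ and $Triv$ are performed only after the core rules; since each of these strictly decreases the size of the term, they cannot by themselves sustain an infinite reduction, and hence an infinite $\ra$-reduction yields an infinite reduction using only $\reb$, $\rebb$, $\rep$ and $\ra_{\pib}$. It therefore suffices to prove strong normalization for the core relation, and from now on $SN$ denotes the set of terms strongly normalizing with respect to it.

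The heart of the argument is to show that $SN$ is closed under substitution: whenever $U,V\in SN$ and the typing makes $U[x:=V]$ well-formed, then $U[x:=V]\in SN$. Granting this, strong normalization of every typed term follows by structural induction. Variables lie in $SN$, and the constructors $\lan\cdot,\cdot\ran$, $\si_i(\cdot)$ and $\l x(\cdot)$ preserve $SN$, since under the core rules they admit only internal reductions. The one substantial case is a cut $(P_1\sta P_2)$ with $P_1,P_2\in SN$: here a subsidiary induction on $cxty$ of the cut formula disposes of the head $\pi$- and $\pib$-redexes, which produce cuts at strictly smaller complexity on already strongly normalizing components, while the head $\b$- and $\bb$-redexes yield substitution instances $R[x:=Q]$ handled directly by the closure lemma.

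To prove closure under substitution we argue by contradiction. Suppose $U,V\in SN$ but $U[x:=V]$ has an infinite core reduction. As $U$ and $V$ are individually in $SN$, the divergence must originate in redexes created when a reduct of a substituted copy of $V$ is confronted with the context surrounding an occurrence of $x$. The device for isolating this is the \emph{zoom-in sequence of redexes}: from the infinite reduction one extracts redexes $R_0,R_1,R_2,\ldots$ such that each $R_{i+1}$ is a subterm of the one-step reduct of $R_i$, so the sequence focuses progressively on a single non-normalizing locus. Analysing how each core step transforms a term of the shape $U'[x:=V]$, one shows that a \emph{minimal} zoom-in sequence keeps the current term in this substituted shape with $U'$ a subterm of a reduct of $U$; this is precisely the reduction step announced in the Introduction. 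These invariants then support a well-founded induction on a measure built from the lengths of the longest reductions of $U$ and $V$ together with the complexity of the cut formula created at a critical step, and this measure is forced to decrease strictly along the zoom-in sequence, contradicting its infinity.

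The main obstacle will be the design and verification of this measure, and the difficulty is intrinsic to the symmetry of $\ls$. Because negation is involutive, $cxty(A^{\bot})=cxty(A)$, so a $\b$- or $\bb$-step does not by itself reduce the complexity of the cut formula; type complexity alone therefore cannot drive the induction, and it is exactly the combinatorial information carried by the zoom-in sequence that must compensate. One must also handle the two substitution-creating rules $\b$ and $\bb$ symmetrically, a cut $(P\sta Q)$ being able to fire on either side according to which component carries the abstraction, and control their interaction with the selectors $\pi$ and $\pib$ within a single zoom-in sequence. Establishing that minimal zoom-in sequences genuinely preserve the shape $U'[x:=V]$, and that the measure is well-founded and strictly decreasing at every critical redex, is where essentially all the effort lies; the remaining verifications are routine.
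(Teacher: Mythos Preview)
Your overall architecture matches the paper's: postpone $\eta,\eta^\bot,Triv$ to reduce to the core $\b,\bb,\pi,\pib$, then prove the core is SN by showing $SN_{\b\pi}$ is closed under substitution, using zoom-in sequences to locate the source of divergence. The final derivation of SN from closure under substitution is also the same (via $(M\sta x)\in SN_{\b\pi}$ and then $[x:=N]$).

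There is, however, a real gap in your invariant. You claim a minimal zoom-in sequence ``keeps the current term in the shape $U'[x:=V]$ with $U'$ a subterm of a reduct of $U$''. This is true as far as it goes (it is the content of Lemma~\ref{ch3:nsnus}), but it is \emph{not} the invariant that closes the induction. The critical case is when the head redex has its binder supplied by the substituted side: say $U'=(x\sta P)$, $V\ra^*\l zV'$, and the next step is $(V\sta P[x:=V])\reb V'[z:=P[x:=V]]$. The result is no longer of the form $U''[x:=V]$ with $U''\preceq U$; the roles of base and substituent have swapped. Since $cxty(A^\bot)=cxty(A)$, this swap does not lower the type complexity, so your proposed measure cannot decrease here without further structure.

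The paper's remedy is the device you do not mention: \emph{alternating substitutions}. One defines two mutually recursive families $\Pi_A(\ma{B})\subseteq\Sigma_A$ and $\Theta_{A^\bot}(\ma{A})\subseteq\Sigma_{A^\bot}$, where each substituted term is itself allowed to carry a substitution of the opposite colour, and a set $\ma{S}_A(\ma{A},\ma{B})$ of terms built this way (Definition~\ref{ch3:lssimsub}). The genuine zoom-in invariant (Lemma~\ref{ch3:zoom}) is that minimal zoom-in sequences stay inside $\ma{S}_A(\ma{A},\ma{B})$, under an inductive hypothesis $H$ on smaller type complexity; this is what lets Lemma~\ref{ch3:lsmain} pin the blame on a subterm $(U'\sta a)\preceq U$ with a controlled $\si'\in\Sigma_A$. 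The closure lemma (Lemma~\ref{ch3:sn}) is then proved for this generalized substitution form $M\rho$ with $\rho=[x_i:=N_i\tau_i]$, by induction on the $5$-tuple $(cxty(A),\eta_{\b\pi}(M),cxty(M),\Sigma_i\eta_{\b\pi}(N_i),\Sigma_i cxty(N_i))$. You correctly diagnosed that type complexity alone cannot drive the argument; the alternating substitutions are precisely the missing combinatorial compensation.
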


Berardi and Barbanera proved Theorem \ref{SN1} for the
extension of the $\ls$-calculus equivalent to first-order
Peano-arithmetic. The proof of this result by Berardi and Barbanera
\cite{Ber-Bar} is based on reducibility candidates, but the
definition of the interpretation of a type relies on
non-arithmetical fixed-point constructions.

We present a syntactical and arithmetical proof of the strong normalization of the $\ls$-calculus in Section 3.
The proof was inspired by a method of David and Nour \cite{Dav-Nou3}. First we establish that the permutation rules $\eta$, $\eta^\bot$ and
$Triv$ can be postponed so that we can restrict our attention uniquely to the rules $\b$, $\b^\bot$, $\pi$, and $\pi^\bot$.

\subsection{Permutation rules}

First of all, we prove that the $\eta$- and
$\eta^{\bot}$-reductions can be postponed w.r.t. $\b$, $\b^\bot$,
$\pi$, and $\pi^\bot$.

\begin{defi}\hfill
\begin{enumerate}

\item Let $\l_{\b\pi}$-calculus denote the calculus with only the reduction rules $\ra_\b$, $\ra_{\bb}$, $\ra_\pi$, and $\ra_{\pib}$.
\item Let $\ra_{\b \pi}$ stand for the union of $\ra_\b, \ra_{\bb}, \ra_\pi
, \ra_{\pib}$ and let $M\ra_{e}N$ denote the fact that
$M\ra_{\eta}N$ or $M\ra_{\eta^{\bot}}N$.
\item We denote by
$\ra_{\beta_{0}}$ (resp. by $\ra_{\beta^{\bot}_{0}}$) the $\b$-reduction $(\l
xM\sta N)\reb M[x:=N]$ (resp. the $\b^{\bot}$-reduction $(N\sta \l
xM)\rebb M[x:=N]$), where $x$ occurs at most once in $M$.
\item We use the standard notation $\ra^+$ and $\ra^*$ for the transitive and reflexive, transitive closure of a reduction,
respectively.
\end{enumerate}
\end{defi}

\noindent We examine the behaviour of a $\ree$ rule followed by a $\reb$ and a $\rebn$ rule in Lemmas \ref{ch3:ete} and \ref{ch3:bn}.

\begin{lem}\label{ch3:ete}
If $U \ree V \reb W$, then $U\reb V'\ra^*_e W$ or $U\rebn V'\reb W$ for some $V'$.
\end{lem}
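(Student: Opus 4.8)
The plan is to reason by case analysis on the relative positions in $U$ of the redex $R$ contracted in $U \ree V$ and the redex $S = (\l x M \sta N)$ contracted in $V \reb W$. By the left--right symmetry of $\ls$ (exchanging the two operands of every $\sta$, which swaps $\reb$ with $\rebb$ and $\rebn$ with $\rebbn$) it is enough to analyse in detail the case $R = \l y(P \sta y)$ with $y \notin Fv(P)$, contracting to $P$, and to read off the $\eta^\bot$-case as its mirror image. There are three possibilities: either (a) $R$ and $S$ are disjoint, or $S$ lies inside the contractum $P$, or $R$ lies inside $M$ or inside $N$; (b) the contractum $P$ is itself the redex $S$; or (c) $S$ is created by the contraction of $R$.

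First I would settle the commuting cases (a) and (b), all of which give the left disjunct $U \reb V' \ra^*_e W$. In each of them the pattern of $S$ is already present in $U$ (contracting $R$ leaves it untouched, or only relocates and copies it), so I contract $S$ in $U$ first to get $U \reb V'$ and then erase the residual(s) of $R$ by $\ree$-steps. The one point requiring care is that when $R$ sits inside the argument $N$ of $S$, contracting $S$ copies $R$ once for each occurrence of $x$ in $M$; this is exactly why the conclusion needs the closure $\ra^*_e$ rather than a single $\ree$-step, and it also subsumes the case $x \notin Fv(M)$, where no residual survives and $V' = W$. I would also check that a $\b$-step never destroys the side condition of $R$: substitution adds no new free variables, so after $\a$-renaming $y$ away from $N$ one still has $y \notin Fv(P[\cdots])$ and every residual of $R$ remains a genuine $\eta$-redex; in particular, in case (b) contracting $S$ inside $R$ turns $R$ into $\l y(M[x:=N]\sta y)$, still an $\eta$-redex by the free-variable bound, and one $\ra_\eta$-step reaches $W$.

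The heart of the argument is case (c), the only genuinely non-commuting interaction, and it produces the right disjunct. An $\eta$-contraction can create a $\b$-redex only by exposing an abstraction when its $\sta$ is removed, i.e.\ $U$ contains $(\l y(P \sta y) \sta N)$ with $P = \l x M$, so that $V$ contains the new redex $S = (\l x M \sta N)$. The decisive remark is that the outer application $(\l y(P \sta y) \sta N)$ is already a $\b$-redex of $U$, and since $y \notin Fv(P)$ the bound variable $y$ occurs exactly once in the body $P \sta y$; hence this application is a $\rebn$-redex. Contracting it yields $(P \sta y)[y:=N] = (P \sta N)$, which is \emph{literally} the term $V$ --- the $\b_0$-step reproduces the effect of the $\eta$-step in this configuration. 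Therefore $U \rebn V$, and the given step $V \reb W$ finishes the right disjunct $U \rebn V' \reb W$ with $V' = V$.

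The hard part is thus conceptual rather than computational: verifying that (a)--(c) are exhaustive and that the creation of a redex is the sole interaction that fails to commute, together with the recognition that there the linearity of the vanishing $\eta$-variable is precisely what degrades the ambient $\b$-step to the restricted $\rebn$-step demanded by the statement. Everything else is routine residual tracking and the free-variable checks indicated above.
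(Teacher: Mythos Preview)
Your proof is correct and follows essentially the same strategy as the paper: a case analysis on how the $\eta$-redex and the $\beta$-redex interact, with the decisive observation that the only non-commuting case is the creation case $(\l y(P\sta y)\sta N)$ with $P=\l xM$, where linearity of $y$ makes the outer application a $\rebn$-redex reproducing $V$. The paper organizes the same analysis as an induction on $cxty(U)$ (so the deep and disjoint cases fall to the induction hypothesis rather than to explicit residual tracking), but the substantive cases and the key $\rebn$ insight are identical.
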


\begin{proof}
We assume $\ree$ is an $\eta$-reduction, the proof of the case of $\reb$ is similar.
The proof is by induction on $cxty(U)$. The only interesting case is $U=(U_{1}\sta
U_{2})$. We consider only some of the subcases.
\begin{enumerate}

\item $U_{1}=\l x(U_{3}\sta x)$, with $x\notin Fv(U_3)$, and $V=(U_3\sta U_2)
\ra_{\b}U_4[y:=U_2]=W$, where $U_3=\l yU_4$. In this case $U=(\l x(U_{3}\sta
x)\sta U_{2})\rebn (U_{3}\sta U_{2})\reb U_{4}[y:=U_{2}]=W$, so
$\ra_{\eta} \reb$ turns into $\rebn \reb$.
\item $U_{1}=\l xU_{3}$, $U_{3}\ra_{\eta} U_{4}$ and $V=(\l xU_4\sta U_2)
\ra_{\b}U_{4}[x:=U_{2}]=W$. Then $U\reb V'=U_{3}[x:=U_{2}]\ra_{\eta}U_{4}[x:=U_{2}]=W$.
\item $U_{1}=\l xU_{3}$, $U_{2}\ra_{\eta} U_{4}$ and $V=(\l xU_3\sta U_4)
\ra_{\b}U_{3}[x:=U_{4}]=W$. Then $U\reb
V'=U_{3}[x:=U_{2}]\ra^*_{\eta}U_{3}[x:=U_{4}]$.\qedhere
\end{enumerate}
\end{proof}

\begin{lem}\label{ch3:bn}
If $U \ree V\rebn W$, then $U\rebn W$ or $U\rebn V'\ree W$ or $U\rebn V'\rebn W$ for some $V'$.
\end{lem}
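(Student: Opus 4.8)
The plan is to follow the pattern of Lemma~\ref{ch3:ete}, arguing by induction on $cxty(U)$. By the de~Morgan symmetry of the calculus I may assume that the first step is an $\eta$-reduction and the second the matching $\b_0$-reduction, the dual situation (an $\eta^\bot$-step followed by a $\b_0^\bot$-step) being the exact mirror image. As in Lemma~\ref{ch3:ete} the only genuinely interesting case is $U=(U_1\sta U_2)$; the cases $U=x$ (vacuous), $U=\l zP$, $U=\si_i(P)$ and $U=\lan P_1,P_2\ran$ are either immediate or reduce to the induction hypothesis applied to a $\l$- or $\si$-body, where $cxty$ strictly drops. The guiding observation, and the one point where this lemma genuinely differs from Lemma~\ref{ch3:ete}, is that $\rebn$ is \emph{linear}: a $\b_0$-contraction inserts its argument at most once, so at most one copy of an $\eta$-redex occurring in that argument can survive. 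This is precisely why the residual $\eta$-activity in the conclusion is a single step $\ree$, or disappears altogether (the alternative $U\rebn W$), rather than the multi-step $\ra^*_e$ of Lemma~\ref{ch3:ete}.

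For $U=(U_1\sta U_2)$ I would case on the positions of the contracted $\eta$-redex (in $U\ree V$) and the contracted $\b_0$-redex (in $V\rebn W$). The substantive cases are those in which the $\b_0$-redex sits at the root of $V$, so that $V=(\l xU_3'\sta U_2)\rebn U_3'[x:=U_2]=W$ with $x$ occurring at most once in $U_3'$. First, if the $\eta$-step contracts the head of $U_1$, i.e. $U=(\l w(U_3\sta w)\sta U_2)$ with $w\notin Fv(U_3)$ and $U_3=\l yU_4$, then $U$ is \emph{already} a $\b_0$-redex (its head abstraction binds $w$, which occurs exactly once), so $U\rebn(U_3\sta U_2)=V\rebn W$, giving the alternative $U\rebn V'\rebn W$. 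Second, if the $\eta$-step lies inside the body, $U=(\l xU_3\sta U_2)$ with $U_3\ra_\eta U_3'$, then I use that $\eta$-reduction leaves unchanged the number of free occurrences of every variable other than its own bound variable; hence $x$ occurs at most once in $U_3$ too, $U$ is a $\b_0$-redex, and $U\rebn U_3[x:=U_2]=V'$. Since substitution commutes with the $\eta$-redex (after renaming its bound variable off $x$ and $U_2$), one has $V'\ree W$ in a single step, giving $U\rebn V'\ree W$.

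The third root case is where the $\eta$-step lies in the argument: $U=(\l xU_3\sta U_2)$ with $U_2\ra_\eta U_2'$ and $W=U_3[x:=U_2']$. Here I contract $U\rebn U_3[x:=U_2]=V'$ and compare with $W$: if $x\notin Fv(U_3)$ both equal $U_3$, so $V'=W$ and we land in $U\rebn W$; if $x$ occurs once, the unique copy of $U_2$ in $V'$ reduces by $\eta$ to $U_2'$, so $V'\ree W$. This is exactly the place where linearity caps the $\eta$-activity at one step. The remaining configurations are routine: if the two redexes are disjoint they commute to $U\rebn V'\ree W$, and if they lie in a common proper subterm I invoke the induction hypothesis on that subterm (recursing into a $\l$- or $\si$-body when the enclosing application carries a variable argument, so that $cxty$ strictly decreases) and re-wrap the resulting three-way alternative.

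I expect the main obstacle to be the side-condition bookkeeping rather than any conceptual difficulty: one must consistently $\alpha$-rename the $\eta$-bound variables off the $\b_0$-bound variable $x$ and off $U_2$, and carry the occurrence-count invariant for $x$ through the $\eta$-step, in order to certify both that the pre-$\eta$ term is a genuine $\b_0$-redex and that the residual $\eta$-redex is unique. It is worth stressing that one cannot simply cite Lemma~\ref{ch3:ete} with $\rebn$ read as $\reb$: that would only yield its weaker conclusion $U\reb V'\ra^*_e W$ (general $\b$, multi-step $\eta$), and recovering the sharper statement with $\b_0$ throughout and a single trailing $\eta$ is exactly what the linearity analysis above supplies.
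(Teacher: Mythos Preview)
Your proposal is correct and follows essentially the same route as the paper: induction on $cxty(U)$, focus on $U=(U_1\sta U_2)$, and the same three-way split according to whether the $\eta$-redex is the head of $U_1$, lies inside the body $U_3$, or lies in the argument $U_2$. Your explicit linearity rationale (why only a single trailing $\ree$ can survive) and the occurrence-count bookkeeping are spelled out more carefully than in the paper, but the argument is the same.
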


\begin{proof}
By induction on $cxty(U)$. We assume $U=(U_1\sta U_2)$ and we consider some of the more interesting cases.
\begin{enumerate}

\item $U_{1}=\l x(U_{3}\sta x)$, with $x\notin Fv(U_3)$, and $V=(U_3\sta U_2)
\ra_{\b_0}U_4[y:=U_2]=W$, where $U_3=\l yU_4$. In this case $U=(\l x(U_{3}\sta
x)\sta U_{2})\rebn (U_{3}\sta U_{2})\rebn U_{4}[y:=U_{2}]=W$, thus
$\ra_{\eta} \rebn$ turns into $\rebn \rebn$.
\item $U_{1}=\l xU_{3}$, $U_{3}\ra_{\eta} U_{4}$ and $V=(\l xU_4\sta U_2)
\ra_{\b_0}U_{4}[x:=U_{2}]=W$. Then $U\rebn V'=U_{3}[x:=U_{2}]\ra_{\eta}U_{4}[x:=U_{2}]=W$.
\item $U_{1}=\l xU_{3}$, $U_{2}\ra_{\eta} U_{4}$ and $V=(\l xU_3\sta U_4)
\ra_{\b_0}U_{3}[x:=U_{4}]=W$. Then $U\rebn
V'=U_{3}[x:=U_{2}]\ra_{\eta}U_{3}[x:=U_{4}]$ provided $x$ occurs in $U_3$. Otherwise $U\rebn U_3=W$.\qedhere
\end{enumerate}
\end{proof}

\noindent We obtain easily the following lemma on the behaviour of several $\ree$ rules followed by a $\reb$ or a $\rebn$ rule.

\begin{lem}\label{ch3:bnk}
If $U{\ra^*_e} V\rebn W$, then $U{\rebn}^+ V'{\ra^*_e} W$ for some $V'$, and \\
$lg(U{\rebn}^+ V'{\ra^*_e} W)\leq lg(U{\ra^*_e} V\rebn W)$.
\end{lem}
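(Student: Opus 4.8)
The plan is to commute the single $\rebn$-step at the right end of the sequence leftward, past all the $\ree$-steps, one step at a time, using Lemma \ref{ch3:bn} as the atomic commutation move. The point to exploit is that each of the three outcomes of Lemma \ref{ch3:bn} applied to a pattern $\ree\,\rebn$ begins with a $\rebn$-step and consists of at most two steps in total; in particular no outcome increases the number of reduction steps (two steps in, one or two out), and every outcome produces at least one $\rebn$-step. These two features are precisely what will let me both reach the required shape ${\rebn}^+\,\ra^*_e$ and maintain the length bound.

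The difficulty is that Lemma \ref{ch3:bn} does not merely swap a $\ree$ and a $\rebn$: its second outcome leaves a fresh $\ree$-step to the right of the new $\rebn$, and its third outcome creates a second $\rebn$-step. Hence a single induction on the number of $\ree$-steps does not close, since commuting one $\ree$-step can re-introduce a $\ree$-step further right or increase the number of $\rebn$-steps still to be moved. I would therefore first isolate an auxiliary statement: if $U\ree V\,{\rebn}^m\,W$ with $m\geq 1$, then $U\,{\rebn}^+ V'\ra^*_e W$ for some $V'$, of length at most $m+1$. This auxiliary claim I prove by induction on the size $m$ of the $\rebn$-block. For $m=1$ it is exactly Lemma \ref{ch3:bn}. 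For the step I apply Lemma \ref{ch3:bn} to the leftmost $\ree\,\rebn$ pattern: in its first and third outcomes every $\rebn$-step already sits at the front, whereas its second outcome, $U\rebn V'\ree V_1$, leaves a single $\ree$-step followed by a $\rebn$-block of one smaller size, to which the induction hypothesis applies before prepending the new leading $\rebn$-step.

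With the auxiliary claim in hand, the lemma itself follows by induction on the number $k$ of $\ree$-steps in $U\ra^*_e V$. The base case $k=0$ is immediate. For the step I peel off the leftmost $\ree$-step, $U\ree U_1\ra^*_e V\rebn W$, apply the induction hypothesis to $U_1\ra^*_e V\rebn W$ to obtain $U_1\,{\rebn}^+ Z\ra^*_e W$, and then apply the auxiliary claim to the prefix $U\ree U_1\,{\rebn}^+ Z$ to push the remaining $\ree$-step through the $\rebn$-block, yielding $U\,{\rebn}^+ Z'\ra^*_e Z\ra^*_e W$. The main obstacle is exactly this bookkeeping: I must check that the two nested inductions never let the length grow, which holds precisely because Lemma \ref{ch3:bn} is length–non-increasing, so each splice adds lengths of already-bounded segments and the final count stays at most $k+1$.
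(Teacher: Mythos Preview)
Your argument is correct. The paper's own proof is a single line: induction on the total length $lg(U\ra^*_e V\rebn W)$, using Lemma~\ref{ch3:bn}. Unfolded, this amounts to peeling off the \emph{rightmost} $\ree$-step, applying Lemma~\ref{ch3:bn} to the adjacent pair $\ree\,\rebn$, and re-invoking the induction hypothesis on the strictly shorter sequence(s) that result; in the third outcome of Lemma~\ref{ch3:bn} this needs two calls to the hypothesis, which is harmless since both calls are on shorter sequences. You instead peel from the \emph{left} and factor out an explicit auxiliary claim (a single $\ree$ against a $\rebn$-block), proving it by a separate induction on the block size before running the main induction on the number of $\ree$-steps. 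Both routes rest on the same atomic commutation and the same length bookkeeping; yours makes the propagation of the swap through a block explicit, while the paper's single induction on total length is more economical and avoids the nested structure.
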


\begin{proof}
By induction on $lg(U \ra^*_e V\rebn W)$, using Lemma \ref{ch3:bn}.
\end{proof}

\begin{lem}\label{ch3:incb}
If $U{\ra^*_e} V\reb W$, then $U{\reb}^+ V'\ra^*_e W$ for some $V'$.
\end{lem}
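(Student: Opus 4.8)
The plan is to prove the statement by induction on $n = lg(U \ra^*_e V)$, the number of $e$-steps preceding the single $\b$-step. The base case $n=0$ is immediate: then $U=V\reb W$, so $U\reb^+ W$ with an empty $e$-tail. For the inductive step I would isolate the \emph{last} $e$-step, writing $U\ra^*_e U'\ree V\reb W$ with $lg(U\ra^*_e U')=n-1$, and apply Lemma \ref{ch3:ete} to the three-term fragment $U'\ree V\reb W$. Peeling off the last (rather than the first) $e$-step is the crucial design choice: it keeps a single $\b$-step at the right end throughout, which is exactly the shape the induction hypothesis consumes.

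Lemma \ref{ch3:ete} then splits into two cases. In the first, $U'\reb V''\ra^*_e W$; here $U\ra^*_e U'\reb V''$ is a reduction with $n-1$ $e$-steps followed by a single $\b$-step, so the induction hypothesis yields $U{\reb}^+ V'\ra^*_e V''$, and concatenating the tail $V''\ra^*_e W$ gives the desired $U{\reb}^+ V'\ra^*_e W$.

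The second case is where the real work lies: Lemma \ref{ch3:ete} returns $U'\rebn V''\reb W$, replacing the $\eta$-step by a \emph{linear} $\b_0$-step. Here I would feed $U\ra^*_e U'\rebn V''$ into Lemma \ref{ch3:bnk}, obtaining $U{\rebn}^+ V_2\ra^*_e V''$ together with the length bound $lg(U{\rebn}^+ V_2\ra^*_e V'')\le lg(U\ra^*_e U'\rebn V'')=n$. Since the ${\rebn}^+$ block contributes at least one step, the number of $e$-steps in $V_2\ra^*_e V''$ is at most $n-1$, so the induction hypothesis applies to $V_2\ra^*_e V''\reb W$ and produces $V_2{\reb}^+ V'\ra^*_e W$. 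As every $\b_0$-step is in particular a $\b$-step, prepending $U{\rebn}^+ V_2$ yields $U{\reb}^+ V'\ra^*_e W$.

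The main obstacle, and the reason a naive induction fails, is that a full $\b$-step can duplicate the $\eta$-redex, so commuting one $e$-step past one $\b$-step may create \emph{several} new $e$-steps (this is precisely the $\ra^*_e$ tail in Lemma \ref{ch3:ete}); a plain count of $e$-steps therefore need not decrease. What rescues the argument is the length inequality built into Lemma \ref{ch3:bnk}: it guarantees that the linear ($\rebn$) case strictly shrinks the $e$-prefix, and it is exactly for this purpose that the bound was recorded there. I expect no further difficulty beyond the length-bookkeeping in the second case.
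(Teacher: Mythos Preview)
Your proposal is correct and matches the paper's approach: the paper's proof is just ``By induction on $lg(U{\ra^*_e} V\reb W)$. Use Lemmas \ref{ch3:ete} and \ref{ch3:bnk},'' and your write-up is exactly a fleshing out of this, peeling off the last $e$-step, applying Lemma~\ref{ch3:ete}, and in the $\rebn$-branch invoking Lemma~\ref{ch3:bnk} with its length bound before the inductive call. Your induction variable (the number of $e$-steps) differs from the paper's (total length) only by the constant~$1$, so the two are equivalent.
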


\begin{proof} By induction on $lg(U{\ra^*_e} V\reb W)$.
Use Lemmas \ref{ch3:ete} and \ref{ch3:bnk}.
\end{proof}

\begin{lem}\label{ch3:incbb}
If $U{\ra^*_e} V\rebb W$, then $U{\rebb}^+ V'{\ra^*_e} W$ for some $V'$.
\end{lem}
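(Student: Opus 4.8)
The plan is to exploit the left--right symmetry of the $\ls$-calculus and reduce this statement to its $\b$-analogue, Lemma~\ref{ch3:incb}. Concretely, I would introduce a \emph{mirror} operation $M\mapsto M^m$ on terms by structural recursion: it fixes variables, commutes with $\lan\,,\ran$, $\sigma_i$ and $\l$, and swaps the two arguments of every cut, $((P_1\sta P_2))^m=(P_2^m\sta P_1^m)$. The mirror is plainly an involution and commutes with substitution, $(M[x:=N])^m=M^m[x:=N^m]$, by an immediate induction. Moreover $M^m$ is typable at the same type whenever $M$ is: the only nontrivial case is the cut rule, where from $P_1:A^\bot$, $P_2:A$ one re-derives $(P_2^m\sta P_1^m):\bot$ using that negation is involutive, $(A^\bot)^\bot=A$.

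The key observation is that the mirror is an automorphism of the term structure (up to swapping cut arguments), so it carries a one-step reduction at any position to a one-step reduction at the mirrored position, and it conjugates the reduction system into its dual. Inspecting the rules one checks that $M\reb N$ iff $M^m\rebb N^m$; that $M\ra_\eta N$ iff $M^m\ra_{\eta^\bot}N^m$ and symmetrically, so that $\ree$ is self-dual, $M\ree N$ iff $M^m\ree N^m$; and that the linearity side-condition is preserved, so $M\rebn N$ iff $M^m\rebbn N^m$ (the rules $\pi$, $\pi^\bot$ are interchanged likewise, though they do not intervene here). In particular composition of these relations and their transitive and reflexive--transitive closures all transport across the mirror.

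Granting this, the proof is immediate. Suppose $U\ra^*_e V\rebb W$. Applying the mirror gives $U^m\ra^*_e V^m\reb W^m$, which is exactly the hypothesis of Lemma~\ref{ch3:incb}. That lemma produces some $Z$ with $U^m\,{\reb}^+\,Z\ra^*_e W^m$. Mirroring back and using involutivity, $U={(U^m)}^m\,{\rebb}^+\,Z^m\ra^*_e {(W^m)}^m=W$, so $V'=Z^m$ witnesses the claim.

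I expect the only real work to be the verification that the mirror faithfully conjugates the reductions --- in particular that it sends $\ra_\eta$ to $\ra_{\eta^\bot}$ and the linear $\b$-step to the linear $\b^\bot$-step while preserving the side-conditions $x\notin Fv(P)$ and that $x$ occurs at most once. This is routine, but it is the one place where the symmetry of the $\eta/\eta^\bot$ and $\b/\b^\bot$ rules and involutivity of negation (for typability of $M^m$) are genuinely used. If one preferred to avoid the mirror, the alternative is to re-run the argument directly: establish the $\b^\bot$-analogues of Lemmas~\ref{ch3:ete}, \ref{ch3:bn} and \ref{ch3:bnk} (with $\rebb$, $\rebbn$ in place of $\reb$, $\rebn$) by the same case analysis on $U=(U_1\sta U_2)$, now with the abstraction on the right of the cut, and then induct on $lg(U\ra^*_e V\rebb W)$ exactly as in Lemma~\ref{ch3:incb}. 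This costs more writing but presents no new difficulty.
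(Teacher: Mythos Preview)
Your proposal is correct. The paper's own proof is a single line, ``Similar to that of the previous lemma,'' which is precisely your \emph{alternative} route: one tacitly re-establishes the $\b^\bot$-analogues of Lemmas~\ref{ch3:ete}, \ref{ch3:bn} and \ref{ch3:bnk} and then inducts on the length of the reduction as in Lemma~\ref{ch3:incb}.

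Your primary approach via the mirror involution is a genuinely different, and cleaner, argument. Instead of duplicating three auxiliary lemmas and an induction, you factor the symmetry of the calculus into a single structural map and invoke Lemma~\ref{ch3:incb} once. The verifications you flag (that the mirror conjugates $\reb$ with $\rebb$, $\ra_\eta$ with $\ra_{\eta^\bot}$, preserves the linearity and free-variable side-conditions, and commutes with substitution) are all routine and your sketch of them is sound; typability of $(P_2^m\sta P_1^m)$ indeed uses $(A^\bot)^\bot=A$. The trade-off is that the paper's approach requires no new definition and is immediate to a reader who has just worked through the $\b$-case, whereas your approach makes the $\b/\b^\bot$ duality explicit and reusable (it would equally dispatch the $\pi^\bot$-cases in Lemma~\ref{ch3:etp}, for instance). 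Either is acceptable; the mirror is arguably more informative.
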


\begin{proof}
Similar to that of the previous lemma.
\end{proof}

We investigate now how a $\ree$ rule behaves when followed by a $\rep$ or $\ra_{\pib}$ rule.

\begin{lem}\label{ch3:etp}
If $U\ree V\rep W$ (resp. $U\ree V\ra_{\pib} W$), then  $U\rep V'\ree W$ or $U\rep W$
(resp. $U\ra_{\pib} V'\ree W$ or $U\ra_{\pib} W$) for some $V'$.
\end{lem}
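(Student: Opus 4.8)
The plan is to argue by induction on $cxty(U)$, following the pattern of Lemmas~\ref{ch3:ete} and~\ref{ch3:bn}, treating only the $\rep$ case since the $\ra_{\pib}$ case is symmetric. As in those lemmas the only interesting case is $U=(U_1\star U_2)$: for the other shapes of $U$ the two contracted redexes lie in disjoint subterms, so they commute and give $U\rep V'\ree W$, or else one lies strictly inside the other and the induction hypothesis applies to a proper subterm. When $U=(U_1\star U_2)$, if the $\pi$-redex contracted in $V\rep W$ lies strictly inside a component of $V$ the induction hypothesis again settles the case, so it remains to treat the situation in which this $\pi$-redex sits at the top of $V$, namely $V=(\langle R_1,R_2\rangle\star\sigma_i(S_i))\rep(R_i\star S_i)=W$, and to locate the preceding $\ree$-step relative to it.

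First I would dispose of the situations in which this top $\pi$-redex is already present in $U$, that is, the $\ree$-step is performed strictly inside one of $R_1,R_2,S_i$. If it lies inside the surviving components $R_i$ or $S_i$, I contract the $\pi$-redex of $U$ first and recover the $e$-step on the reduct, obtaining $U\rep V'\ree W$. If instead it lies inside $R_j$ with $j\neq i$, that subterm is erased by the $\pi$-contraction, the $e$-step becomes superfluous, and $U\rep W$ holds directly; this accounts for the second disjunct of the statement.

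The step I expect to be the main obstacle is the one in which the $\ree$-step itself creates the top $\pi$-redex. This occurs when the contracted $\eta$-redex is the immediate left child of the relevant $\star$ and its contractum is the pair, so that $U=(\l x(\langle R_1,R_2\rangle\star x)\star\sigma_i(S_i))$ with $x\notin Fv(\langle R_1,R_2\rangle)$, the $\eta$-step replacing an abstraction by $\langle R_1,R_2\rangle$ and thereby forming the $\pi$-redex of $V$ (and symmetrically when the exposed contractum is an injection sitting as the right child). Here $U$ carries no $\pi$-redex of its own, so the resolution must pass through the abstraction that the $\eta$-step removes; this is exactly the head-creating interaction already met in case~(1) of Lemma~\ref{ch3:ete}, where an $\eta$-step standing in front of an application collapsed together with the reduction it enabled. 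I would treat it by the same device, inspecting the one-step reducts and checking that the created redex is reached without leaving a pending $\ree$-redex, so that one of the two stated forms results. This is the only place requiring genuine care; once it is settled, the remaining subcases are routine and the symmetric treatment of $\ra_{\pib}$ completes the induction.
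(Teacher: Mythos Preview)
Your case analysis is more careful than the paper's, and you correctly single out the head-creating interaction---$U=(\l x(\langle R_1,R_2\rangle\star x)\star\sigma_i(S_i))$ with $x\notin Fv(\langle R_1,R_2\rangle)$---as the crux. But this case is not merely an obstacle: it is a counterexample to the lemma as stated. Take $R_1,R_2,S_i$ to be distinct variables; then $U$ contains no $\pi$-redex whatsoever (the outer $\star$ is a $\beta$-redex, the inner $(\langle R_1,R_2\rangle\star x)$ has a variable on the right), so $U\rep V'$ is impossible for any $V'$, and neither disjunct of the conclusion can hold. The symmetric configuration with $\eta^{\bot}$ exposing the injection on the right gives the same phenomenon.

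Your analogy with case~(1) of Lemma~\ref{ch3:ete} is exactly the right intuition but delivers the wrong conclusion: there the collapsed $\eta$-step was replaced by a $\rebn$-step, yielding $U\rebn V'\reb W$. The parallel here is $U\rebn V\rep W$, which does hold, but is not among the two alternatives permitted by the present statement. So the device of Lemma~\ref{ch3:ete} cannot produce ``one of the two stated forms''; a third alternative $U\rebn V'\rep W$ (and dually $U\rebbn V'\ra_{\pib}W$) is needed.

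For comparison, the paper's own proof simply asserts that only two possibilities arise: the $\eta$-step at the root of $U$, or the $\eta$-step strictly inside a component of an already-present top $\pi$-redex. The creating case you isolate is omitted, so the paper's argument has the same gap. Fortunately this does not affect the downstream use: Lemma~\ref{ch3:inc} allows $\ra_{\b\pi}^+$ in its conclusion, so a corrected version of the present lemma with the extra $\rebn$ alternative feeds into it without change.
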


\begin{proof}
Observe that in case of $U\ree V\rep W$ the following possibilities can occur:
either $U=\l x(V\sta x)$ and $V\rep W$ or $U=(\lan P_1,P_2\ran \sta \si(Q))$ and $V=(\lan P_1',P_2'\ran \sta \si(Q'))$,
 where exactly one of $P_i\ree P_i'$, $Q\ree Q'$ holds, the other two terms are left unchanged.
From this, the statement easily follows.
\end{proof}

\begin{lem}\label{ch3:inc}
If $U\ra^*_e V\ra_{\b \pi} W$, then $U\ra^{+}_{\b \pi} V'\ra_e W$ for some $V'$.
\end{lem}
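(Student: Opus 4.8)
The plan is to split on the shape of the $\b\pi$-redex contracted in $V\ra_{\b\pi}W$ and to treat the four rules separately, handing the $\b$- and $\bb$-cases to the postponement results already proved and settling the $\pi$- and $\pib$-cases by an induction driven by Lemma~\ref{ch3:etp}.

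First, if $V\reb W$, then Lemma~\ref{ch3:incb} applies directly and gives $U\reb^{+}V'\ra^*_e W$ for some $V'$, hence $U\ra^{+}_{\b\pi}V'\ra^*_e W$; symmetrically, if $V\rebb W$, Lemma~\ref{ch3:incbb} yields $U\rebb^{+}V'\ra^*_e W$, again of the required form. Thus only the two rules $\pi$ and $\pib$ remain, and by the left--right symmetry of the calculus it is enough to treat $V\rep W$.

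For the $\pi$-case I would prove, by induction on $n=lg(U\ra^*_e V)$, that $U\ra^*_e V\rep W$ implies $U\rep^{+}V'\ra^*_e W$ for some $V'$. The base case $n=0$ is immediate. For the inductive step I write $U\ree U_{1}\ra^*_e V\rep W$ and apply the induction hypothesis to $U_{1}\ra^*_e V\rep W$, obtaining $U_{1}\rep^{+}V''\ra^*_e W$; it then only remains to pull the single leading $\eta$-step of $U\ree U_{1}$ through the block $U_{1}\rep^{+}V''$. This I handle by an inner induction on the length of that block, invoking Lemma~\ref{ch3:etp} one $\pi$-step at a time: each application either turns $\ree\rep$ into $\rep\ree$, so that the lone $\eta$-step survives and can be pushed against the next $\pi$-step, or turns $\ree\rep$ into a bare $\rep$, which happens exactly when the $\eta$-redex sits in a component erased by the $\pi$-contraction and therefore disappears. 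In both branches at least one $\pi$-step is created and at most one $\eta$-step is left in front, so the block yields $U\rep^{+}V'\ra^{\leq 1}_e V''$; composing with $V''\ra^*_e W$ closes the inductive step. The $\pib$-case is the mirror image, using Lemma~\ref{ch3:etp} in its $\ra_{\pib}$ form.

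The one delicate point is this $\eta/\pi$ interaction: since Lemma~\ref{ch3:etp} may either preserve or annihilate the $\eta$-redex, I must check that the inner induction closes in both branches and, crucially, that permuting an $\eta$-step past a $\pi$-contraction never duplicates it --- in contrast with the $\b$-case, where the possible multiplication of $\eta$-steps from substituting into a variable with several occurrences is already absorbed inside Lemma~\ref{ch3:incb}. With that bookkeeping in place the four cases combine to give $U\ra^{+}_{\b\pi}V'\ra^*_e W$, which is the claim.
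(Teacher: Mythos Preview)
Your approach is essentially the paper's own: the paper's proof reads in full ``By Lemmas~\ref{ch3:incb}, \ref{ch3:incbb} and \ref{ch3:etp}'', and you are simply unpacking what those citations mean --- a case split on the contracted rule, direct appeal to Lemmas~\ref{ch3:incb} and~\ref{ch3:incbb} for $\b$ and $\bb$, and an induction driven by Lemma~\ref{ch3:etp} for $\pi$ and $\pib$. So the proposal is correct and on the same track.

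One simplification worth noting: your nested double induction in the $\pi$-case is heavier than needed. If you strengthen the auxiliary claim to ``$U\ra^*_e V\rep W$ implies $U\rep V'\ra^*_e W$'' (a \emph{single} $\pi$-step), the outer induction on $lg(U\ra^*_e V)$ goes through directly: from $U\ree U_1\ra^*_e V\rep W$ the hypothesis gives $U_1\rep V_1\ra^*_e W$, and a single application of Lemma~\ref{ch3:etp} to $U\ree U_1\rep V_1$ finishes the step with no inner induction. Also, your informal gloss that the $\eta$-step vanishes ``exactly when the $\eta$-redex sits in a component erased by the $\pi$-contraction'' is not quite the case analysis of Lemma~\ref{ch3:etp}, but since you only use the lemma's conclusion this does not affect correctness.
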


\begin{proof}
By Lemmas \ref{ch3:incb}, \ref{ch3:incbb} and \ref{ch3:etp}.
\end{proof}

\begin{lem} \label{ch3:exch}
If $U\ra^*_e V\ra^*_{\b \pi} W$, then $U\ra^+_{\b \pi} V'\ra^*_e W$ for some $V'$.
\end{lem}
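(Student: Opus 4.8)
The plan is to prove Lemma~\ref{ch3:exch} by induction on the number $n$ of $\b\pi$-steps in the reduction $V\ra^*_{\b \pi} W$, using Lemma~\ref{ch3:inc} as the engine that pushes a single $\b\pi$-step to the front of a block of $e$-reductions. For $n=1$ the claim is exactly Lemma~\ref{ch3:inc} (weakening the final $\ree$-step, which is a single step, to $\ra^*_e$), so this serves as the base case; the degenerate case $n=0$ produces no $\b\pi$-redex and is excluded by the $\ra^+_{\b \pi}$ in the conclusion.

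For the inductive step, I would assume the statement holds whenever the $\b\pi$-part consists of $n$ steps and consider a reduction $U\ra^*_e V\ra^*_{\b \pi} W$ in which $V\ra^*_{\b \pi} W$ has $n+1$ steps. I factor off the \emph{last} step, writing $V\ra^*_{\b \pi} W_1\ra_{\b \pi} W$, where $V\ra^*_{\b \pi} W_1$ has $n$ steps. Applying the induction hypothesis to $U\ra^*_e V\ra^*_{\b \pi} W_1$ then yields a term $V_1$ with $U\ra^+_{\b \pi} V_1\ra^*_e W_1$.

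This leaves me with the configuration $V_1\ra^*_e W_1\ra_{\b \pi} W$, which is precisely the shape to which Lemma~\ref{ch3:inc} applies. Invoking it produces a $V'$ with $V_1\ra^+_{\b \pi} V'\ra^*_e W$. Concatenating the two $\b\pi$-segments gives $U\ra^+_{\b \pi} V_1\ra^+_{\b \pi} V'\ra^*_e W$, that is $U\ra^+_{\b \pi} V'\ra^*_e W$, which is the desired conclusion.

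The argument is essentially bookkeeping, and I do not expect a genuine obstacle once Lemma~\ref{ch3:inc} is in hand; the only point requiring care is to keep each appeal to that lemma aimed at exactly one $\b\pi$-step preceded by a block of $e$-reductions. This is why I isolate the \emph{final} $\b\pi$-step rather than the first: the induction hypothesis then delivers the intermediate term $W_1$ reached from $V_1$ by $\ra^*_e$, so that the remaining single $\b\pi$-redex lines up perfectly with the $\ra^*_e$-prefix that Lemma~\ref{ch3:inc} consumes.
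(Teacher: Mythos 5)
Your proof is correct and matches the paper's approach: the paper simply says the lemma ``follows from the previous lemma'' (Lemma~\ref{ch3:inc}), and your induction on the number of $\b\pi$-steps, peeling off the last one and feeding the configuration $V_1\ra^*_e W_1\ra_{\b\pi} W$ back into Lemma~\ref{ch3:inc}, is exactly the routine elaboration of that one-line argument. Your remark that the degenerate case with zero $\b\pi$-steps must be excluded (since the conclusion demands $\ra^+_{\b\pi}$) is a fair and careful reading of the lemma's implicit intent.
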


\begin{proof}
Follows from the previous lemma.
\end{proof}

We are now in a position to prove the main result of the section.

\begin{lem}\label{ch3:sne}
The $\eta$- and the $\etb$-reductions are strongly normalizing.
\end{lem}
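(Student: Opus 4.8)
The plan is to exhibit a strictly decreasing, well-founded measure that is respected by every $\ree$-step, the obvious candidate being the term complexity $cxty$. Since $cxty$ takes values in the natural numbers, it suffices to prove that whenever $M \ree N$ one has $cxty(N) < cxty(M)$; no infinite $\ree$-sequence can then exist. The essential feature to exploit is that, in contrast to $\b$, $\bb$, $\pi$ and $\pib$, the rules $\eta$ and $\eta^{\bot}$ involve no substitution and strictly erase structure, so complexity alone should settle the matter.

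First I would check the decrease at the level of the redex. For an $\eta$-redex one computes $cxty(\l x(P \sta x)) = cxty((P \sta x)) + 1 = cxty(P) + cxty(x) + 1 = cxty(P) + 1 > cxty(P)$, and symmetrically $cxty(\l x(x \sta P)) = cxty(P) + 1 > cxty(P)$ for an $\eta^{\bot}$-redex. Hence contracting the redex itself lowers the complexity by exactly one.

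Since $\ree$ is the compatible closure of these two rules, the contracted redex may sit inside an arbitrary context, so the second step is to propagate the strict decrease from the redex to the whole term. This is a routine induction on the structure of $M$ (equivalently, on the context surrounding the redex): in each clause defining $cxty$ the complexity of a term is a sum, or the successor of a sum, of the complexities of its immediate subterms, so replacing one immediate subterm by a strictly less complex one strictly decreases the complexity of the enclosing term, the base case being the contraction computed above. Consequently $M \ree N$ implies $cxty(N) < cxty(M)$.

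Combining the two observations, every $\ree$-reduction sequence $M_0 \ree M_1 \ree \cdots$ yields a strictly decreasing sequence $cxty(M_0) > cxty(M_1) > \cdots$ of natural numbers, which must be finite; therefore $\ree$ is strongly normalizing. I do not expect a genuine obstacle here: the only thing to be careful about is the bookkeeping in the monotonicity induction, and this is immediate from the fact that $cxty$ is defined compositionally over the term constructors.
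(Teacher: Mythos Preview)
Your proposal is correct and is essentially the same approach as the paper's: the paper's proof is the one-liner ``the $\eta$- and $\eta^\bot$-reductions on $M$ reduce the complexity of $M$,'' and you have simply unpacked this by computing the drop at the redex and propagating it through the context. Nothing further is needed.
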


\begin{proof}
The $\eta$- and $\etb$-reductions on $M$ reduce the complexity of $M$.
\end{proof}

\begin{defi}\hfill
\begin{enumerate}

\item Let $\lbe$-calculus denote the calculus obtained from the $\l_{\b\pi}$-calculus by adding the $\eta$- and
$\eta^{\bot}$-reductions to it.
\item Let $\ra_{\b \pi \eta}$ denote the union of $\ra_\b$,
$\ra_{\b^\bot}$, $\ra_\pi$, $\ra_{\pi^\bot}$, $\ra_\eta$ and
$\ra_{\eta^\bot}$.
\item Assume $M$ is a term strongly normalizable in the $\l_{\b \pi}$-calculus. Then we denote by $\eta_{\b \pi}(M)$
 the length of the longest reduction sequence $\ra^*_{\b\pi}$ starting from $M$.
\end{enumerate}
\end{defi}

\begin{cor}\label{ch3:b+e}
If the $\l_{\b\pi}$-calculus is strongly normalizing, then the $\lbe$-calculus is also strongly normalizing.
\end{cor}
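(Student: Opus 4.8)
The plan is to argue by contradiction, turning any infinite $\lbe$-reduction into an infinite $\l_{\b\pi}$-reduction by systematically postponing the $\eta$- and $\eta^\bot$-steps. Suppose the $\lbe$-calculus is not strongly normalizing, so that there is an infinite reduction sequence $\delta : M_0 \ra M_1 \ra M_2 \ra \cdots$, each step being either a $\ra_{\b\pi}$-step or a $\ree$-step. The first observation I would make is that $\delta$ must contain infinitely many $\ra_{\b\pi}$-steps: otherwise, from some index onward every step would be a $\ree$-step, yielding an infinite $\eta/\eta^\bot$-reduction and contradicting Lemma \ref{ch3:sne}.

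Next I would decompose $\delta$ according to its $\b\pi$-steps. Writing the (possibly empty) maximal blocks of consecutive $\ree$-steps explicitly, and setting $Q_0 := M_0$, the sequence $\delta$ takes the shape
$$Q_0 \ra^*_e P_1 \ra_{\b\pi} Q_1 \ra^*_e P_2 \ra_{\b\pi} Q_2 \ra^*_e \cdots,$$
indexed by $k = 1, 2, \ldots$, one index for each of the infinitely many $\b\pi$-steps. The heart of the argument is then to construct a sequence of terms $R_0, R_1, R_2, \ldots$ together with an infinite chain $R_0 \ra^+_{\b\pi} R_1 \ra^+_{\b\pi} R_2 \ra^+_{\b\pi} \cdots$, while maintaining the invariant $R_k \ra^*_e Q_k$. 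I would start with $R_0 = M_0$, so that $R_0 \ra^*_e Q_0$ holds trivially, and then proceed by induction: given $R_k \ra^*_e Q_k$, I concatenate with the next block $Q_k \ra^*_e P_{k+1} \ra_{\b\pi} Q_{k+1}$ of $\delta$ to obtain $R_k \ra^*_e P_{k+1} \ra^*_{\b\pi} Q_{k+1}$, and then apply the exchange Lemma \ref{ch3:exch} to get $R_k \ra^+_{\b\pi} R_{k+1} \ra^*_e Q_{k+1}$ for some $R_{k+1}$. This simultaneously extends the $\b\pi$-chain by at least one step and restores the invariant.

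Since each link $R_k \ra^+_{\b\pi} R_{k+1}$ contains at least one genuine $\ra_{\b\pi}$-step, the resulting chain $R_0 \ra^+_{\b\pi} R_1 \ra^+_{\b\pi} \cdots$ is an infinite $\l_{\b\pi}$-reduction, contradicting the hypothesis that the $\l_{\b\pi}$-calculus is strongly normalizing. Hence no infinite $\lbe$-reduction exists.

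I do not expect a serious obstacle, since the exchange Lemma \ref{ch3:exch} already does the real work; the only points requiring care are the justification that infinitely many $\b\pi$-steps occur (which rests on Lemma \ref{ch3:sne}) and the bookkeeping of the invariant $R_k \ra^*_e Q_k$ across the induction, so that the $\ree$-tail produced by one application of the exchange lemma is absorbed into the $\ree$-prefix of the next block. The subtlety to keep in mind is that postponement must contribute \emph{at least one} $\b\pi$-step at every stage (hence the use of $\ra^+_{\b\pi}$, rather than $\ra^*_{\b\pi}$, in Lemma \ref{ch3:exch}); otherwise the constructed chain could stall and fail to be infinite.
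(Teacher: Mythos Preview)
Your argument is correct and follows the same postponement strategy as the paper, resting on Lemma~\ref{ch3:sne} together with the exchange lemmas. The paper packages the reasoning as an induction on $\eta_{\b\pi}(M)$, invoking Lemma~\ref{ch3:inc} once to strictly decrease this measure after the first $\b\pi$-step, whereas you unwind that induction into an explicit iterative construction of an infinite $\b\pi$-chain via Lemma~\ref{ch3:exch}; the content is the same.
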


\begin{proof}
 Let $M$ be a term, we prove by induction on $\eta_{\b \pi}
(M)$ that $M$ is strongly normalizable in the $\lbe$-calculus. Assume $S$ is an infinite ${\b \pi
\eta}$-reduction sequence starting from $M$. If $S$ begins with a $\ra_{\b \pi}$, then the induction
hypothesis applies. In the case when  $S$
contains only $\ra_e$-reductions, we are done by Lemma \ref{ch3:sne}. Otherwise
there is an initial subsequent $M\ra_e^+ M'\ra_{\b \pi}N$. By Lemma \ref{ch3:inc}, we have $M\ra^+_{\b
\pi}M''\ra^*_{e}N$. Thus, we can apply the induction hypothesis to
$M''$.
\end{proof}

In the rest of the section we deal with the rule $Triv$.
For strong normalization, it is enough to show
that $\ra_{Triv}$ can be postponed w.r.t. $\ra _{\b\pi\eta}$.

\begin{lem}\label{ch3:exct}
If $U\ra^*_{Triv} V\ra_{\b\pi\eta} W$, then $U\ra _{\b\pi\eta}^+ V'\ra^*_{Triv} W$ for some $V'$.
\end{lem}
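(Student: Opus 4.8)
The plan is to reduce the statement to its \emph{one-step} version, namely that $U\ra_{Triv}V\ra_{\b\pi\eta}W$ implies $U\ra_{\b\pi\eta}V'\ra_{Triv}W$ for a suitable $V'$, and then to extend it to arbitrarily long $Triv$-prefixes by an induction on the number of $Triv$-steps in $U\ra^*_{Triv}V$. This mirrors the pattern used for the earlier permutation lemmas, the only new ingredient being the bookkeeping of the side condition $(*)$ attached to the rule $Triv$.

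For the one-step case I would unfold the single $Triv$-reduction: by definition of the rule there are a context $E[-]$ of type $\bot$ with $E[-]\neq[-]$ and a term $P$ of type $\bot$ such that $U=E[P]$, $V=P$, and $E[-]$ binds no free variable of $P$. Since $V=P$ is the whole term, the reduction $V\ra_{\b\pi\eta}W$ is a reduction $P\ra_{\b\pi\eta}W$ of the term $P$ itself. Because $\ra_{\b\pi\eta}$ is the compatible closure and $E[-]$ merely surrounds $P$ without capturing any of its free variables, the very same contraction can be performed inside $U$, giving $U=E[P]\ra_{\b\pi\eta}E[W]$. I then set $V'=E[W]$ and close with a single step $E[W]\ra_{Triv}W$.

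The delicate point — and the one thing that really has to be checked — is that this final $Triv$-step is legitimate, i.e.\ that the side condition $(*)$ still holds for $E[W]$. The context $E[-]$ is untouched, so it still has type $\bot$ and still differs from $[-]$; by the type-preservation property $W$ has type $\bot$ exactly as $P$ did; and since reduction never introduces new free variables we have $Fv(W)\subseteq Fv(P)$, so $E[-]$ still binds none of the free variables of $W$. Hence $E[W]\ra_{Triv}W$ is a genuine instance of $Triv$, and the one-step case is complete.

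For the general statement I would argue by induction on the number $n$ of $Triv$-steps in $U\ra^*_{Triv}V$. If $n=0$ then $U=V$ and the claim is immediate with $V'=W$. If $n\geq 1$, split the reduction as $U\ra^*_{Triv}U_0\ra_{Triv}V\ra_{\b\pi\eta}W$ with $n-1$ initial steps, apply the one-step case to the last two reductions to obtain $U_0\ra_{\b\pi\eta}V_0\ra_{Triv}W$, and then apply the induction hypothesis to $U\ra^*_{Triv}U_0\ra_{\b\pi\eta}V_0$, whose $Triv$-prefix now has only $n-1$ steps, to get $U\ra^+_{\b\pi\eta}V'\ra^*_{Triv}V_0$. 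Concatenating the remaining step $V_0\ra_{Triv}W$ yields $U\ra^+_{\b\pi\eta}V'\ra^*_{Triv}W$, as required. The only real obstacle throughout is the preservation of the $Triv$ side condition under the lifted $\b\pi\eta$-contraction, which the type-preservation property and the monotonicity of free variables under reduction settle.
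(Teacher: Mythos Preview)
Your approach is exactly the paper's: reduce to the one-step case, note that both $V$ and $W$ have type $\bot$, and lift the $\b\pi\eta$-contraction through the Triv-context $E[-]$, then close with a single Triv step. You are considerably more explicit than the paper's one-sentence argument --- spelling out the verification of the side condition $(*)$ via type preservation and free-variable monotonicity, and writing out the induction on the length of the Triv-prefix --- all of which the paper leaves to the reader.
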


\begin{proof}
It is enough to prove that if $U\ra_{Triv} V\ra_{\b\pi\eta} W$, then $U\ra _{\b\pi\eta} V'\ra_{Triv} W$ for some $V'$.
Observe that if $U=E[V]\ra_{Triv}V\ra_{\b\pi\eta} W$, then $V:\bot$ and $W:\bot$, from which the statement follows.
\end{proof}

\begin{lem}\label{ch3:tsn}
The reduction $\ra_{Triv}$ is strongly normalizing.
\end{lem}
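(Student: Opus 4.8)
The plan is to show that every $\ra_{Triv}$ step strictly decreases the complexity $cxty$ of a term, in exactly the same spirit as the $\eta$- and $\eta^{\bot}$-reductions do in Lemma \ref{ch3:sne}. Since $cxty$ takes non-negative integer values, strong normalization of $\ra_{Triv}$ will then follow immediately. Because $cxty$ is defined compositionally and additively over the constructors $\lan\,,\ran$, $\sta$, $\l$, and $\si_i$, replacing one occurrence of a subterm $S$ by a subterm $S'$ changes the complexity of the whole term by exactly $cxty(S')-cxty(S)$. Hence it suffices to prove that for every $Triv$-redex $E[P]$, with $E[-]\neq[-]$ and $P$ of type $\bot$, we have $cxty(E[P])>cxty(P)$.

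The key step is a typing observation. The hole of the context $E[-]$ carries type $\bot$, since it is precisely the slot into which the $\bot$-typed term $P$ is plugged. Inspecting the typing rules, the argument positions of $\lan\,,\ran$, $\sta$, and $\si_i$ all receive $m$-types, never $\bot$; the only constructor admitting a $\bot$-typed argument is $\l$, whose body has type $\bot$ in the rule $\l$. Consequently, in a well-typed context $E[-]$ the immediate constructor enclosing the $\bot$-typed hole must be a $\l$-abstraction. Since the side condition of $Triv$ requires $E[-]\neq[-]$, such an enclosing constructor does exist, so $E[-]$ has the shape $E'[\l x\,[-]]$ for some context $E'[-]$. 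Using $cxty(\l xP)=cxty(P)+1$ together with additivity, I obtain $cxty(E[P])\geq cxty(\l xP)=cxty(P)+1>cxty(P)$, as required.

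The only delicate point I anticipate is resisting the tempting shortcut of arguing merely that $P$ is a proper subterm of $E[P]$ and therefore has strictly smaller complexity. That argument is false here, because variables have complexity $0$: for instance $cxty((x\sta P))=cxty(P)$, so a proper subterm can carry the same complexity as the whole. It is precisely the typing discipline --- the fact that a $\bot$-typed hole can sit only directly beneath a $\l$, which always contributes $1$ to the complexity --- that rules out such flat contexts and forces the strict decrease. With the inequality $cxty(E[P])>cxty(P)$ in hand, every $\ra_{Triv}$ step lowers the complexity of the term it acts on, so no infinite $\ra_{Triv}$ sequence can exist and the lemma follows.
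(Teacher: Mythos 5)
Your proof is correct and takes essentially the same approach as the paper, which disposes of this lemma in one line by asserting that $\ra_{Triv}$ reduces the complexity of the term being reduced. Your typing argument --- that a $\bot$-typed hole can only sit immediately beneath a $\l$, which contributes the strict $+1$ --- supplies exactly the justification the paper leaves implicit, and rightly rejects the fallacious ``proper subterm, hence smaller complexity'' shortcut (which indeed fails, e.g.\ $cxty((x\sta P))=cxty(P)$).
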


\begin{proof}
The reduction $\ra_{Triv}$ on $M$ reduces the complexity of $M$.
\end{proof}

\begin{cor}\label{ch3:snlsfull}
If the $\l_{\b\pi}$-calculus is strongly normalizing, then the ${\lambda}^{\tiny{\textit{Sym}}}_{Prop}$-calculus
is also strongly normalizing.
\end{cor}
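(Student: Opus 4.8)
The plan is to read off that the full $\ls$-calculus is exactly the $\lbe$-calculus augmented with the single rule $\ra_{Triv}$, so that its one-step reduction splits as $\ra_{\b\pi\eta}\cup\ra_{Triv}$. Under the hypothesis that the $\l_{\b\pi}$-calculus is strongly normalizing, Corollary \ref{ch3:b+e} already delivers that $\ra_{\b\pi\eta}$ is strongly normalizing. It therefore suffices to combine this with the two facts about $Triv$ proved just above — that $\ra_{Triv}$ is strongly normalizing (Lemma \ref{ch3:tsn}) and that it can be postponed past $\ra_{\b\pi\eta}$ (Lemma \ref{ch3:exct}) — in a postponement argument that mirrors the proof of Corollary \ref{ch3:b+e}.

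Since $\ra_{\b\pi\eta}$ is strongly normalizing, for each term $M$ the length $n$ of the longest $\ra_{\b\pi\eta}$-reduction issuing from $M$ is well defined, and I would prove by induction on $n$ that $M$ is strongly normalizable in the full $\ls$-calculus. Suppose, for contradiction, that $S$ is an infinite $\ra$-reduction starting from $M$. If $S$ begins with a $\ra_{\b\pi\eta}$-step $M\ra_{\b\pi\eta}M'$, then the longest $\ra_{\b\pi\eta}$-reduction from $M'$ is strictly shorter than $n$, so the induction hypothesis applies to $M'$ and contradicts the infinitude of $S$. If $S$ consists only of $\ra_{Triv}$-steps, it contradicts Lemma \ref{ch3:tsn}. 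Otherwise $S$ has an initial segment $M\ra^+_{Triv}M'\ra_{\b\pi\eta}N$ followed by an infinite reduction; by Lemma \ref{ch3:exct} this rearranges to $M\ra^+_{\b\pi\eta}M''\ra^*_{Triv}N$, so there is again an infinite $\ra$-reduction from $M$ whose first step is a $\ra_{\b\pi\eta}$-step, and we are back in the first case.

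The corollary is essentially routine once the preceding lemmas are in place, so I do not expect a genuine obstacle; the only points requiring care are bookkeeping ones. One must note that the measure $n$ is well defined, which is precisely where Corollary \ref{ch3:b+e} is used, and that a single $\ra_{\b\pi\eta}$-step strictly decreases it while the trailing $\ra^*_{Triv}$-steps produced by Lemma \ref{ch3:exct} are harmlessly deferred to the tail. An equivalent, more explicitly \emph{extractive} formulation would instead transform $S$ directly: writing it as alternating blocks
$$M\;\ra^*_{Triv}\;\cdot\;\ra_{\b\pi\eta}\;\cdot\;\ra^*_{Triv}\;\cdot\;\ra_{\b\pi\eta}\;\cdots$$
with infinitely many $\ra_{\b\pi\eta}$-steps (the alternative being excluded by Lemma \ref{ch3:tsn}) and repeatedly applying Lemma \ref{ch3:exct} to migrate every $Triv$-block rightwards, one extracts at each round at least one honest $\ra_{\b\pi\eta}$-step at the front and so builds an infinite pure $\ra_{\b\pi\eta}$-reduction, contradicting Corollary \ref{ch3:b+e}. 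The inductive phrasing is preferable here because it stays manifestly within first-order arithmetic.
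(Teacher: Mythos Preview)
Your proposal is correct and follows exactly the approach the paper intends: the paper's own proof is the one-line ``By Corollary~\ref{ch3:b+e} and Lemmas~\ref{ch3:exct} and~\ref{ch3:tsn}'', and you have simply unfolded this into the same postponement-plus-induction argument used for Corollary~\ref{ch3:b+e}, with $\ra_{Triv}$ playing the role that $\ra_e$ played there.
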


\begin{proof}
By Corollary \ref{ch3:b+e} and Lemmas \ref{ch3:exct} and \ref{ch3:tsn}.
\end{proof}

\section{Strong normalization of the $\l_{\b\pi}$-calculus}

In this section, we give an arithmetical proof for the strong normalization of the $\l_{\b\pi}$-calculus.
In the sequel we detail the proofs for
the $\b$- and $\pi$-reductions only, all the proofs below can be
extended with the cases of the $\b^{\bot}$- and $\pi^{\bot}$-reduction rules
in a straightforward way. We intend to examine how substitution behaves with respect to strong normalizability.
The first milestone in this way is Lemma \ref{ch3:nsnus}. Before stating the lemma, we formulate some auxiliary statements.

\begin{defi}\hfill
\begin{enumerate}

\item Let $SN_{\b\pi}$ denote the set of strongly normalizable terms of the $\l_{\b\pi}$-calculus.
\item Let $M\in SN_{\b\pi}$, then $\eta c(M)$ stands for the pair $\lan  \eta_{\b\pi}(M) , cxty(M)\ran$.
\end{enumerate}
\end{defi}

\begin{lem}\label{ch3:nsn} Let us suppose $M\in SN_{\b\pi}$, $N\in SN_{\b\pi}$ and
$(M\sta N)\notin SN_{\b\pi}$. Then there are $P\in SN_{\b\pi}$, $Q\in SN_{\b\pi}$
such that $M \ra^*_{\b\pi} P$ and $N \ra^*_{\b\pi}  Q$ and $(P\star Q)\notin SN_{\b\pi}$ is a redex.
\end{lem}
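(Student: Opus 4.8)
The plan is to read off $P$ and $Q$ directly from an infinite reduction sequence starting at $(M\star N)$, by locating the first moment at which the outermost $\star$ gets contracted. Since $(M\star N)\notin SN_{\b\pi}$, fix an infinite $\b\pi$-reduction sequence $S$ beginning at $(M\star N)$. The structural observation driving the argument is that, as long as no step of $S$ contracts the outermost $\star$, every term occurring in $S$ has the shape $(M'\star N')$ with $M\ra^*_{\b\pi}M'$ and $N\ra^*_{\b\pi}N'$: each such step contracts a redex lying entirely inside the left component or entirely inside the right component, and the two components never interact across the top $\star$ unless the top $\star$ itself is the contracted redex.

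Next I would bound the two kinds of ``internal'' steps separately. Collecting the steps internal to the left component, in order, yields a genuine $\b\pi$-reduction of $M$; since $M\in SN_{\b\pi}$ there can be at most $\eta_{\b\pi}(M)$ of them, and symmetrically at most $\eta_{\b\pi}(N)$ steps internal to the right component. Hence at most $\eta_{\b\pi}(M)+\eta_{\b\pi}(N)$ steps of $S$ can precede the first contraction of the outermost $\star$; as $S$ is infinite, such a contraction must occur. Let $(P\star Q)$ be the term immediately before it. By the observation above $M\ra^*_{\b\pi}P$ and $N\ra^*_{\b\pi}Q$, so $P,Q\in SN_{\b\pi}$ as reducts of strongly normalizable terms; the step in question exhibits $(P\star Q)$ as a redex; and the tail of $S$ from $(P\star Q)$ onward is infinite, so $(P\star Q)\notin SN_{\b\pi}$. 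This is exactly the desired conclusion.

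The one point requiring care is the decomposition used for the bound: I must check that a step internal to $M'$ is really a step of a bona fide reduction sequence of $M$ (and dually on the right), so that the measure $\eta_{\b\pi}$ delivers the two finite bounds. This holds because contracting a redex inside $M'$ leaves $N'$ unchanged and vice versa, so between two consecutive left-internal steps the left component is fixed, and the accumulated left steps and right steps form independent reductions of $M$ and $N$. With this in place the remainder is routine bookkeeping, and the finiteness of $\eta_{\b\pi}(M)$ and $\eta_{\b\pi}(N)$ forces the top-level redex to fire.
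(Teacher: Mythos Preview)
Your argument is correct and is essentially the same as the paper's: the paper packages the same idea as an induction on $\eta c(M)+\eta c(N)$, picking a one-step reduct of $(M\star N)$ that is not in $SN_{\b\pi}$ and applying the induction hypothesis when the step is internal, while you unfold this induction by following a fixed infinite sequence until the top $\star$ fires. The content---internal steps are bounded by $\eta_{\b\pi}(M)+\eta_{\b\pi}(N)$, so a head contraction must occur---is identical.
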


\begin{proof}
By induction on $\eta c(M)+\eta c(N)$. Assume $M\in SN_{\b\pi}$, $N\in SN_{\b\pi}$ and
$(M\sta N)\notin SN_{\b\pi}$. When $(M\sta N)\ra (M'\star N)$ or $(M\sta N)\ra (M\sta N')$, then the induction hypothesis applies.
Otherwise $(M\sta N)\ra P\notin SN_{\b\pi}$, and we have the result.
\end{proof}

\begin{defi}\hfill
\begin{enumerate}
\item A proper term is a term differing from a variable.
\item For a type $A$, $\Sigma_A$ denotes the set of simultaneous substitutions of the form
$[x_{1}:=N_{1},\ldots ,x_{k}:=N_{k}]$ where $N_i$ ($1 \leq i \leq n$) is proper and has type $A$.
\item A simultaneous substitution $\si \in \Sigma_A$ is said to be in $SN_{\b\pi}$, if, for every $x\in dom(\si )$, $\si
(x)\in SN_{\b\pi}$ holds.
\end{enumerate}
\end{defi}

\begin{lem} \label{ch3:ml}
Let $M,N$ be terms such that $M \ra^*_{\b\pi} N$.
\begin{enumerate}

\item If $N=\l xP$, then $M=\l xP_1$ with $P_1 \ra^*_{\b\pi} P$.
\item If $N=\lan P , Q\ran$, then $M=\lan P_1 , Q_1 \ran$ with $P_1 \ra^*_{\b\pi} P$ and $Q_1 \ra^*_{\b\pi} Q$.
\item If $N=\si_i (P)$, then $M=\si_i (P_1)$ with $P_1 \ra^*_{\b\pi} P$, for $i\in \{ 1,2\}$.
\end{enumerate}
\end{lem}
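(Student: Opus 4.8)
The plan is to argue by induction on the length $n=lg(M\ra^*_{\b\pi}N)$, after isolating a single-step reverse statement that I then iterate. The base case $n=0$ is immediate, since then $M=N$ already has the required shape. For the inductive step I factor the reduction as $M\ra_{\b\pi}M_1\ra^*_{\b\pi}N$ with the tail of length $n-1$, apply the induction hypothesis to $M_1\ra^*_{\b\pi}N$ to learn the top-level shape of $M_1$, and then transport this shape back one step from $M_1$ to $M$.

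The single-step statement I need is: if $M\ra_{\b\pi}M'$ and $M'$ is an abstraction (resp.\ a pair, resp.\ an injection $\si_i$), then $M$ has the same top-level constructor, with the immediate subterms of $M$ reducing by $\ra^*_{\b\pi}$ to the corresponding subterms of $M'$. The crucial observation driving this is that every redex of the $\l_{\b\pi}$-calculus --- whether $\b$, $\bb$, $\pi$ or $\pib$ --- is a cut $(\cdot\sta\cdot)$, and a cut has type $\bot$. By the type-preservation property $M$ and $M'$ carry the same type; since $M'=\l xP'$ (resp.\ $\lan P',Q'\ran$, resp.\ $\si_i(P')$) has an $m$-type, which is never $\bot$, the term $M$ cannot be a cut. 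As $M$ reduces, it is not a variable either, so $M$ is itself an abstraction, a pair, or an injection. The root of such a term is not a redex (redexes are cuts), so the contracted redex lies in a proper subterm and the reduction descends into one of the immediate subterms, leaving the top-level constructor untouched. Matching $M$ against $M'$ then forces the stated shape together with the componentwise reductions, irrespective of which of the four rules was applied inside.

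With the single-step statement in hand, the induction closes by composition: for instance in case (1), the induction hypothesis gives $M_1=\l xR$ with $R\ra^*_{\b\pi}P$, the single-step statement gives $M=\l xP_1$ with $P_1\ra^*_{\b\pi}R$, and transitivity yields $P_1\ra^*_{\b\pi}P$; the pair and injection cases are identical up to bookkeeping of the two components.

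I expect the only real obstacle to be the step where I rule out $M$ being a cut. Syntactically this is false --- a cut such as $(\l y\,y\sta\l xP)$ $\b$-reduces to the abstraction $\l xP$ --- so the lemma genuinely relies on typing: the body of a $\l$-abstraction must have type $\bot$, and no abstraction, pair, or injection can sit there, so such offending cuts are simply not well-typed terms of ${\mathcal T}$. Making this dependence on the type-preservation property explicit is the one point that requires care; everything else is a routine case analysis on the position of the contracted redex.
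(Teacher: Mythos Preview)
Your argument is correct and matches what the paper intends by ``Straightforward'': induct on the reduction length, and for the one-step case observe that every $\b\pi$-redex is a cut of type $\bot$, so a term whose reduct has an $m$-type cannot itself be a cut, forcing the top-level constructor to persist. Your explicit identification of the dependence on typing (and the counterexample showing the lemma fails untyped) is exactly the content of the paper's remark that the lemma would break in the presence of $\eta$-rules.
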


\begin{proof}
Straightforward.
\end{proof}

We remark that in the presence of the $\ra_{\eta}$ and $\ra_{\eta}^{\bot}$ rules the above lemma would not work.
For example, $\l x(y\star x)\ra_{\eta}y$.

\begin{lem}\label{ch3:mx}
If $M\in SN_{\b\pi}$ and $x \in Var$, then $(M\sta x)\in SN_{\b\pi}$ (resp. $(x\star M)\in SN_{\b\pi}$).
\end{lem}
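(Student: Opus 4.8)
The plan is to prove the statement by induction on $\eta_{\b\pi}(M)$, which is legitimate since $M\in SN_{\b\pi}$, and to analyse exactly which reductions $(M\sta x)$ admits. The crucial observation is that a redex occurring at the root of $(M\sta x)$ can only be a $\b$-redex: the rule $\bb$ demands a $\l$-abstraction in the right-hand component, while $\pi$ and $\pib$ demand a $\si_i(\cdot)$ or $\lan\cdot,\cdot\ran$ there, and none of these shapes is a variable. Hence the only way to contract at the root is when $M=\l yP$, in which case $(\l yP\sta x)\reb P[y:=x]$. Consequently every one-step reduct of $(M\sta x)$ is of one of two forms: an \emph{internal} reduct $(M'\sta x)$ with $M\ra_{\b\pi}M'$, or the \emph{root} reduct $P[y:=x]$ when $M=\l yP$.

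First I would dispose of the internal reducts. If $M\ra_{\b\pi}M'$ then $\eta_{\b\pi}(M')<\eta_{\b\pi}(M)$, so $M'\in SN_{\b\pi}$ and the induction hypothesis gives $(M'\sta x)\in SN_{\b\pi}$. For the root reduct, since $M=\l yP$ reduces from $M\in SN_{\b\pi}$, both $\l yP$ and its subterm $P$ lie in $SN_{\b\pi}$. It then remains to see that $P[y:=x]\in SN_{\b\pi}$, i.e. that replacing the variable $y$ by the variable $x$ cannot destroy strong normalizability. Granting this, every one-step reduct of $(M\sta x)$ belongs to $SN_{\b\pi}$, and as the $\l_{\b\pi}$-calculus is finitely branching we conclude $(M\sta x)\in SN_{\b\pi}$, closing the induction.

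The step I expect to be the only real content is the auxiliary fact that $P[u:=v]\in SN_{\b\pi}$ whenever $P\in SN_{\b\pi}$ and $u,v\in Var$. The point is that the substitution $[u:=v]$ acts only on variable leaves and leaves all other constructors in place, so it preserves the head-shapes ($\l$, $\lan\cdot,\cdot\ran$, $\si_i$) on which the redex patterns of $\b,\bb,\pi,\pib$ depend. Thus a variable can never become a $\l$-abstraction, a pair, or an injection, so no \emph{new} redexes are created, and the contraction operation commutes with $[u:=v]$; one checks that $P\ra_{\b\pi}Q$ implies $P[u:=v]\ra_{\b\pi}Q[u:=v]$ and that every redex of $P[u:=v]$ comes from a corresponding redex of $P$ at the same position. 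Hence the reductions of $P$ and of $P[u:=v]$ are in bijection and $\eta_{\b\pi}(P[u:=v])=\eta_{\b\pi}(P)$, giving $P[u:=v]\in SN_{\b\pi}$.

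Finally, the parenthetical case $(x\sta M)\in SN_{\b\pi}$ is handled symmetrically: here the only possible root contraction is the $\bb$-step $(x\sta\l yP)\rebb P[y:=x]$ when $M=\l yP$, and the rules $\b,\pi,\pib$ are excluded because they would require $x$ to be a $\l$-abstraction, a pair, or an injection. The same induction on $\eta_{\b\pi}(M)$ together with the renaming fact then applies verbatim.
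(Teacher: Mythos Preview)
Your proof is correct and rests on the same two observations as the paper's: the only possible root contraction of $(M\sta x)$ is a $\b$-step (since a variable cannot match the $\l$-, $\lan\cdot,\cdot\ran$-, or $\si_i$-shape required by $\bb$, $\pi$, $\pib$), and replacing a free variable by another variable preserves $SN_{\b\pi}$.

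The organizational difference is that the paper argues by contradiction and invokes the preceding Lemma~\ref{ch3:nsn}: assuming $(M\sta x)\notin SN_{\b\pi}$, that lemma produces reducts $M\ra^*_{\b\pi}\l yM_1$ and $x\ra^*_{\b\pi}x$ with $(\l yM_1\sta x)\notin SN_{\b\pi}$ a redex, whence $M_1[y:=x]\notin SN_{\b\pi}$; but $M_1\in SN_{\b\pi}$ as a subterm of a reduct of $M$, and the paper dismisses $M_1[y:=x]$ as an ``$\a$-conversion'' of $M_1$. Your direct induction on $\eta_{\b\pi}(M)$ bypasses Lemma~\ref{ch3:nsn} entirely and is more self-contained; it also spells out the renaming step more carefully than the paper's (slightly loose) appeal to $\a$-conversion. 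The paper's route is shorter because the case analysis has already been packaged into Lemma~\ref{ch3:nsn}; yours is more elementary and would stand on its own even without that lemma.
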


\begin{proof}
Let us suppose $M\in SN_{\b\pi}$ and $(M\sta x)\notin SN_{\b\pi}$.
By Lemma \ref{ch3:nsn}, we must have $M \ra^*_{\b\pi} \l y M_1\in SN_{\b\pi}$
such that $(M\star x) \ra^*_{\b\pi} (\l yM_1\star x)\ra_{\b\pi}  M_1[y:=x]$ and $M_1[y:=x]\notin SN_{\b\pi}$.
Being a subterm of a reduct of $M\in SN_{\b\pi}$, we also have $M_1\in SN_{\b\pi}$. Moreover, $M_1[y:=x]$
is obtained from $M_1$ by $\a$-conversion, hence $M_1[y:=x]\in SN_{\b\pi}$, a contradiction.
\end{proof}

\begin{defi}\hfill
\begin{enumerate}

\item Let $M,N$ be terms.
\begin{enumerate}

\item We denote by  $M \leq N$ (resp. $M < N$) the fact that $M$ is a sub-term
(resp. a strict sub-term) of $N$.
\item We denote by $M \prec N$ the fact that $M \leq P$ for some $N \ra^+_{\b\pi} P$
   or $M < N$. We denote by $\preceq$ the
   reflexive closure of $\prec$.
\item Let $R$ be a ${\b\pi}$-redex. We write $M \ra^R N$ if $N$ is the term $M$
after the reduction of $R$.
\end{enumerate}
\item Let ${\mathcal R} = [R_1,\dots ,R_n]$ where $R_i$ is a ${\b\pi}$-redex $(1 \leq i \leq n)$.
Then ${\mathcal R}$ is called zoom-in if, for every $1\leq i< n$,
$R_i\ra^{R_i} R_i'$ and $R_{i+1} \leq R_i'$. Moreover, ${\mathcal R}$ is minimal, if, for each $R_i=(P_i\star Q_i)$,
we have $P_i$, $Q_i\in SN_{\b\pi}$ and $(P_i\star Q_i)\notin SN_{\b\pi}$. We write $M \ra^{\mathcal R} N$, if
$M \ra^{R_1} ... \ra^{R_n} N$.
\end{enumerate}
\end{defi}

\noindent For the purpose of proving the strong normalization of the calculus,
it is enough to show that the set of strongly normalizable terms is closed under substitution.
To this end, we show that, if $U$, $S\in SN_{\b\pi}$ and $U[x:=S]\notin SN_{\b\pi}$, then there is a term $W\leq U$ of a
special form such that $W\in SN_{\b\pi}$ and $W[x:=S]\notin SN_{\b\pi}$. Moreover, we show that the sequence of redexes
leading to $W$ is not completely general, this is a zoom-in sequence defined below. Reducing the outermost redexes of a
zoom-in sequence preserve some useful properties,
which is the statement of Lemma \ref{ch3:zoom}.

\begin{lem} \label{ch3:nsnus}
Let $U$, $S\in SN_{\b\pi}$ and suppose $U[x:=S]\notin
SN_{\b\pi}$. Then there are terms $P,V\preceq U$ and a zoom-in minimal ${\mathcal R}$ such that
$U[x:=S] \ra^{\mathcal R} V[x:=S]$, $(x\star P) \leq V$ (or $(P\star x) \leq V$), $P[x:=S]\in SN_{\b\pi}$
and $(x\star P)[x:=S]\notin SN_{\b\pi}$ (or $(P\star x)[x:=S]\notin SN_{\b\pi}$).
\end{lem}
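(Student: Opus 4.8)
The plan is to argue by induction on $\eta c(U)=\lan \eta_{\b\pi}(U),cxty(U)\ran$, ordered lexicographically, the statement being universally quantified over all $U,S\in SN_{\b\pi}$ with $U[x:=S]\notin SN_{\b\pi}$. First I would dispose of the case where $U$ is a variable: if $U=x$ then $U[x:=S]=S\in SN_{\b\pi}$, and if $U=y\neq x$ then $U[x:=S]=y\in SN_{\b\pi}$; either way this contradicts the hypothesis, so $U$ is proper. Since every subterm of a strongly normalizable term is again in $SN_{\b\pi}$ while $U[x:=S]\notin SN_{\b\pi}$, the term $U[x:=S]$ has a minimal (with respect to $\leq$) subterm $T$ that is not in $SN_{\b\pi}$. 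As neither $\l$, $\lan\,\ran$ nor $\si_i$ can be the head of such a minimal term (that would force a strictly smaller non-normalizable subterm), $T$ must be a cut $(A\sta B)$ with $A,B\in SN_{\b\pi}$ and $(A\sta B)\notin SN_{\b\pi}$. Because a cut lying entirely inside a single copy of $S$ is a subterm of $S\in SN_{\b\pi}$, this cut is the image of a cut $(W_A\sta W_B)\leq U$, i.e.\ $A=W_A[x:=S]$ and $B=W_B[x:=S]$.

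Next I would apply Lemma \ref{ch3:nsn} to $(A\sta B)$ to obtain $P',Q'\in SN_{\b\pi}$ with $A\ra^*_{\b\pi}P'$, $B\ra^*_{\b\pi}Q'$ and $(P'\sta Q')$ a minimal non-normalizable redex. The crux is then a classification of the head of this redex via Lemma \ref{ch3:ml}. Suppose it is a $\b$-redex, so $P'=\l z P_0$; by Lemma \ref{ch3:ml}(1) the term $A=W_A[x:=S]$ already carries a $\l$ at its head, which can only come from the skeleton ($W_A=\l z W_A'$) or from the substituted value through an occurrence of $x$ that has reached head position ($W_A\ra^*_{\b\pi}x$ and $S=\l z S_0$); the $\pi$-, $\b^\bot$- and $\pi^\bot$-cases are analogous, inspecting both components in the $\pi$-cases. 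In the second alternative I have exposed the desired configuration: reducing $W_A\ra^*_{\b\pi}x$ inside $U$ yields a reduct-skeleton $V$ with $(x\sta P)\leq V$ (respectively $(P\sta x)\leq V$), where $P$ is the corresponding reduct of $W_B$; here $P[x:=S]\in SN_{\b\pi}$ because it is a reduct of $B\in SN_{\b\pi}$, whereas $(x\sta P)[x:=S]=(S\sta P[x:=S])\notin SN_{\b\pi}$ since it is exactly the non-normalizable redex being tracked. Lemma \ref{ch3:mx} confirms that the trouble genuinely stems from $S$ and not from a bare variable on the opposite side. This is the terminal case of the construction.

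In the first alternative the redex is the image of a genuine redex of the skeleton, so contracting it realizes a one-step reduction $U\ra_{\b\pi}U^+$ with $\eta_{\b\pi}(U^+)<\eta_{\b\pi}(U)$, hence $\eta c(U^+)<\eta c(U)$. Because $(P'\sta Q')$ is a minimal non-normalizable redex its components are in $SN_{\b\pi}$, so its contractum is not in $SN_{\b\pi}$; consequently $U^+[x:=S]\notin SN_{\b\pi}$ and, more precisely, the non-normalizable part has descended into the contractum. I would then apply the induction hypothesis to the skeleton of that contractum and prepend the just-contracted redex to the zoom-in sequence it furnishes. The nesting condition $R_2\leq R_1'$ holds precisely because the recursion is launched inside the contractum $R_1'$, and minimality is inherited since every redex on the spine has strongly normalizable components. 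Finally, $V\preceq U$ and $P\preceq U$ follow globally rather than by local surgery: each contracted redex is the image of a skeleton reduction, so the whole spine projects onto a reduction $U\ra^*_{\b\pi}\hat U$ with $V$ a subterm of $\hat U$.

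The hard part will be reconciling the strict zoom-in discipline with Lemma \ref{ch3:nsn}. The latter reduces the two components of the cut in order to expose a redex, but such exposing steps are themselves strongly normalizable and hence can appear neither on a \emph{minimal} zoom-in spine nor, in general, respecting the nesting condition. The delicate point is thus to show that, in the typed calculus, once a minimal redex has been contracted the non-normalizability localizes into a cut of the contractum that is already in redex form, so that the next spine step is immediately available, and to verify the head-origin dichotomy rigorously through Lemma \ref{ch3:ml}. Establishing these invariants of zoom-in sequences is exactly what Lemma \ref{ch3:zoom} is designed to supply, and termination of the whole recursion rests on the well-foundedness of $\eta_{\b\pi}$ on the strongly normalizable skeletons $\preceq U$.
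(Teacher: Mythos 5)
Your overall route is the same as the paper's: induction on $\eta c(U)$, isolation of a cut $(W_A\star W_B)\leq U$ whose substituted components are in $SN_{\b\pi}$ while the cut itself is not, an appeal to Lemma \ref{ch3:nsn}, a dichotomy on whether the redex head comes from the skeleton or from $S$ through an occurrence of $x$, recursion into the contractum in the first case and termination in the second. However, there is a genuine gap, and it sits exactly at the point you yourself flag as ``the hard part''. You claim that reconciling the exposing reductions of Lemma \ref{ch3:nsn} with the zoom-in/minimality discipline ``is exactly what Lemma \ref{ch3:zoom} is designed to supply''. It is not: Lemma \ref{ch3:zoom} only asserts preservation of membership in the classes $\ma{S}_A(\ma{A},\ma{B})$ along an \emph{already given} zoom-in minimal sequence, its hypothesis $H$ is not available inside Lemma \ref{ch3:nsnus}, and it is used only later, in Lemma \ref{ch3:lsmain}. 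So, as written, the central step of your construction is left open. The actual resolution is the observation you half-make in your second paragraph and then fail to exploit: by Lemma \ref{ch3:ml}, the head constructor of $P'$ (and, in the $\pi$-cases, of $Q'$) is already \emph{syntactically} present in $A=W_A[x:=S]$ (and $B$), so $(A\star B)$ is itself a redex of the same kind $\rho$, and its contractum reduces to the contractum of $(P'\star Q')$, hence is not in $SN_{\b\pi}$. Consequently no exposing step ever needs to appear on the spine: the sequence consists of the root contraction of $(A\star B)$ followed by the sequence furnished by the induction hypothesis, which is what the paper's ``we may assume that $(U_1\star U_2)[x:=S]\ra_{\rho}U'\notin SN_{\b\pi}$'' abbreviates.

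A second, related defect is your statement of the dichotomy itself. Once Lemma \ref{ch3:ml} tells you that $A=W_A[x:=S]$ is syntactically an abstraction (resp.\ pair, injection), the only possibilities are that $W_A$ is syntactically of that form or that $W_A$ is \emph{literally} the variable $x$; there is no alternative ``$W_A\ra^*_{\b\pi}x$ and $S=\l zS_0$'' involving actual reduction steps. This is not a cosmetic point: your handling of that alternative (``reducing $W_A\ra^*_{\b\pi}x$ inside $U$'') would insert contractions of strongly normalizable redexes into the sequence, and such steps can belong to no zoom-in \emph{minimal} $\mathcal R$ (their components are in $SN_{\b\pi}$ and the redexes themselves are in $SN_{\b\pi}$, and the nesting condition fails as well), so the reduction you would exhibit is not of the form $U[x:=S]\ra^{\mathcal R}V[x:=S]$ demanded by the statement. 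The terminal case must simply be $W_A=x$, with $P=W_B$, $V$ the current skeleton, and no extension of the sequence. With these two repairs -- the dichotomy stated syntactically and the ``hard part'' discharged by Lemma \ref{ch3:ml} rather than deferred to Lemma \ref{ch3:zoom} -- your argument becomes precisely the paper's proof.
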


\begin{proof}
The proof goes by induction on $\eta c(U)$. If $U$ is other than an application, we can apply the induction hypothesis.
Assume $U=(U_1\star U_2)$ with $U_i[x:=S]\in SN_{\b\pi}$ $(i\in \{1,2\})$ and $U[x:=S]\notin SN_{\b\pi}$.
By Lemma \ref{ch3:nsn} and the induction hypothesis we may assume that $(U_1\star U_2)[x:=S]\ra_{\rho} U'\notin SN_{\b\pi}$,
where $\rho\in \{\beta,\beta^\bot,\pi,\pi^\bot\}$. Let us suppose $\rho=\beta$, the other cases can be treated similarly.
If $U_1=\l yU_1'$, then the induction hypothesis applies to $U_1'[y:=U_2]$. Otherwise $U_1=x$, and we have obtained the result.
\end{proof}

Next we define an alternating substitution: we start from two sets of terms of complementary types and the substitution is
defined in a way that we keep track of which newly added sets of substitutions come from which of the two sets.
The reason for this is that Lemma \ref{ch3:nsnus} in itself is not enough for proving the strong normalizability
of \smash{$\ls$} even if we would consider the $\beta$ and $\beta^\bot$ rules alone.
We have to show that, if we start from a term $(U_1\star U_2)$, where $U_1$ and $U_2\in SN_{\b\pi}$ and we assume that
$U_1[x:=U_2]\notin SN_{\b\pi}$, then there are no deep interactions between the terms which come from $U_1$ and from $U_2$.
We can identify a subterm of a reduct of $U_1$ which is the cause for being non $SN_{\b\pi}$, when performing a substitution
with $U_2$.

\begin{defi}\label{ch3:lssimsub}\hfill
\begin{enumerate}

\item A set $\ma{A}$ of proper terms is called $\preceq$-closed from
below if, for all terms $U,U'$, if $U'\preceq U$, $U\in \ma{A}$ and $U'$
is proper, then $U'\in \ma{A}$.
\item Let $\ma{A},\ma{B}$ be sets $\preceq$-closed from below and $A$ a type.
We define simultaneously two sets of substitutions
\begin{enumerate}

\item $\Pi_{A}(\ma{B})\subseteq
\S_{A}$ and $\Theta_{A^{\bot}}(\ma{A})\subseteq \S_{A^{\bot}}$ as follows.
\begin{itemize}
\item[]
\begin{itemize}
\item
$\emptyset \in \Pi_{A}(\ma{B})$,
\item $[y_1:=V_1\t_1,\dots ,y_m:=V_m\t_m]\in \Pi_A(\ma{B})$ if $V_i\in
\ma{B}$ such that $type(V_i)=A$ and $\t_i\in \Theta_{A^{\bot}}(\ma{A})$ $(1\leq i\leq m)$.
\end{itemize}
\item[]
\begin{itemize}
\item $\emptyset \in\Theta_{A^{\bot}}(\ma{A})$.
\item $[x_1:=U_1\r_1,\dots ,x_m:=U_m\r_m]\in\Theta_{A^{\bot}}(\ma{A})$
if $U_i\in \ma{A}$ such that $type(U_i)=A^{\bot}$ and $\r_i\in \Pi_A(\ma{B})$
$(1\leq i\leq m)$.
\end{itemize}
\end{itemize}
\item Let $\ma{S}_{A}(\ma{A},\ma{B}) =\{U\r \;|\; U\in \ma{A}
\textrm{ and } \r \in \Pi_A(\ma{B})\}\cup \{V\t \;|\; V\in \ma{B}
\textrm{ and } \t \in\Theta_{A^{\bot}}(\ma{A})\}$.
It is easy to see that, from $U\leq V$ and $V\in \ma{S}_{A}(\ma{A},\ma{B})$, it follows $U\in \ma{S}_{A}(\ma{A},\ma{B})$.
\end{enumerate}
\end{enumerate}
\end{defi}

\begin{lem}\label{ch3:zoom} Let $n$ be an integer, $A$ a type
of length $n$ and  ${\mathcal R} =[R_1,\dots ,R_m]$
a zoom-in minimal sequence of redexes. Assume the property $H$  ``if $U$,
$V\in SN_{\b\pi}$ and $cxty(type(V))<n$, then $U[x:=V]\in SN_{\b\pi}$'' holds.  If $R_1\in\ma{S}_{A}(\ma{A},\ma{B})$ for some
sets $\ma{A}$ and $\ma{B}$ $\preceq$-closed from below, then
$R_m\in \ma{S}_{A}(\ma{A},\ma{B})$.
\end{lem}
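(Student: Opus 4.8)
The plan is to argue by induction on $m$, isolating a single one-step preservation claim. Recalling the remark after Definition~\ref{ch3:lssimsub} that $\ma{S}_A(\ma{A},\ma{B})$ is closed under subterms, it suffices to prove: if $R_1\in\ma{S}_A(\ma{A},\ma{B})$ is a minimal redex, $R_1\ra^{R_1}R_1'$ and $R_2\leq R_1'$ is a redex, then $R_2\in\ma{S}_A(\ma{A},\ma{B})$. Granting this, $[R_2,\dots,R_m]$ is again zoom-in minimal with first member in $\ma{S}_A(\ma{A},\ma{B})$, and the induction hypothesis gives $R_m\in\ma{S}_A(\ma{A},\ma{B})$.

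For the one-step claim, assume $R_1=U\rho$ with $U\in\ma{A}$ and $\rho\in\Pi_A(\ma{B})$; the case $R_1=V\tau$ is symmetric (exchanging $\ma{A}\leftrightarrow\ma{B}$, $\Pi_A\leftrightarrow\Theta_{A^{\bot}}$, $A\leftrightarrow A^{\bot}$ and $\b\leftrightarrow\b^{\bot}$, $\pi\leftrightarrow\pi^{\bot}$). As $U$ is proper and $U\rho$ a redex, $U=(U'\star U'')$ and $R_1=(U'\rho\star U''\rho)$. I then split according to whether the head constructor of the active side of the redex comes from $U$ or is introduced by $\rho$ (the degenerate subcases in which a replaced term is a variable are dispatched by $\a$-conversion, as in Lemma~\ref{ch3:mx}). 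If it comes from $U$ --- the internal case --- the contractum is $U_0\rho$ with $U\ra^+_{\b\pi}U_0$, so $U_0\preceq U$ and hence $U_0\in\ma{A}$ when proper, giving $R_1'=U_0\rho\in\ma{S}_A(\ma{A},\ma{B})$. In the external $\b$ case, $U'=y$ with $y\rho=V\tau$, $V=\l xV_1\in\ma{B}$, and $U''\in\ma{A}$ of type $A^{\bot}$; the contractum is $V_1\,(\tau\cup[x:=U''\rho])$, and since $U''\in\ma{A}$ has type $A^{\bot}$ and $\rho\in\Pi_A(\ma{B})$, the enlarged substitution still lies in $\Theta_{A^{\bot}}(\ma{A})$, whence $R_1'\in\ma{S}_A(\ma{A},\ma{B})$. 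In all these cases $R_2\leq R_1'$ and subterm closure finish the argument.

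The delicate case is the external $\pi$ case: $U'=y$ with $y\rho=V\tau$, $V=\lan V_1,V_2\ran\in\ma{B}$ of type $A=B_1\wedge B_2$, and $U''=\si_i(Q)$ with $Q\in\ma{A}$ of type $B_i^{\bot}$. Here the contractum is $(V_i\tau\star Q\rho)$, which in general is \emph{not} of either shape admitted into $\ma{S}_A(\ma{A},\ma{B})$, because it joins a $\ma{B}$-term and an $\ma{A}$-term across a freshly exposed cut. Its proper subterms lie below $V_i\tau$ or $Q\rho$, both in $\ma{S}_A(\ma{A},\ma{B})$, hence are in $\ma{S}_A(\ma{A},\ma{B})$; so it only remains to exclude $R_2=(V_i\tau\star Q\rho)$ itself. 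This is exactly where the hypothesis $H$ is used: the type $B_i$ of the new cut satisfies $cxty(B_i)<cxty(A)=n$.

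The auxiliary fact I would establish from $H$ is that every term $(P\star Q)$ with $P,Q\in SN_{\b\pi}$ whose cut type has complexity $<n$ belongs to $SN_{\b\pi}$; this is proved by induction first on the complexity of the cut type and then on $\eta_{\b\pi}(P)+\eta_{\b\pi}(Q)$, checking that all one-step reducts are strongly normalizable --- inner reducts by the numerical induction, a $\b$/$\b^{\bot}$ head reduct by $H$ (its contractum substitutes an $SN_{\b\pi}$ term of type complexity $<n$), and a $\pi$/$\pi^{\bot}$ head reduct by the induction on the now strictly smaller cut type. Since $V_i\tau$ and $Q\rho$ are subterms of the two sides of the minimal redex $R_1$, they are in $SN_{\b\pi}$, so the auxiliary fact forces $(V_i\tau\star Q\rho)\in SN_{\b\pi}$, contradicting the minimality requirement that $R_2$ be non-$SN_{\b\pi}$. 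Thus $R_2$ is a proper subterm and lies in $\ma{S}_A(\ma{A},\ma{B})$. The main obstacle is precisely this external $\pi$-case, where a $\pi$-contraction strips one logical layer and produces a cut genuinely mixing the $\ma{A}$- and $\ma{B}$-sides; it is tamed only through the complexity drop $cxty(B_i)<n$ together with $H$.
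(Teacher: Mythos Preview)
Your argument is correct and follows the same skeleton as the paper: induction on $m$, reduction to a one-step preservation claim, with the crux being that in the external $\pi$-case the exposed cut has strictly smaller type complexity, so hypothesis $H$ forces $R_1'\in SN_{\b\pi}$, contradicting minimality of $R_2$.

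Two points where the paper is tighter. First, your auxiliary fact is unnecessary: the paper simply writes $(Q_1\star S)=(Q_1\star x)[x:=S]$, invokes Lemma~\ref{ch3:mx} to get $(Q_1\star x)\in SN_{\b\pi}$, and applies $H$ once (since $cxty(type(S))<n$). No nested induction is needed. Second, your symmetry claim is looser than it should be. Exchanging $\ma{A}\leftrightarrow\ma{B}$ and $A\leftrightarrow A^{\bot}$ transforms the $U\rho$ presentation into the $V\tau$ presentation, but it does \emph{not} change which reduction rule fires; the pairing with $\b\leftrightarrow\b^{\bot}$ is a separate symmetry ($\star$-reversal of the calculus). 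Consequently, the case ``$\b$-redex with $R_1=V\tau$'' is not the mirror of ``$\b$-redex with $R_1=U\rho$'': when $R_1=V\tau$ the variable side can be $V_1$ (the $\l$-abstraction), forcing $type(V_1)=A^{\bot}$, and the contractum is $U_1\bigl(\rho+[x:=V_2\tau]\bigr)$ with the enlarged substitution landing in $\Pi_A(\ma{B})$ rather than $\Theta_{A^{\bot}}(\ma{A})$. The paper therefore treats $U\rho$ and $V\tau$ separately for each of $\b$ and $\pi$. The missing cases go through by the same mechanism you describe, so your proof is not wrong, but the appeal to symmetry deserves a line of justification or an explicit check of the $V\tau$ subcases.
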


\begin{proof} The proof goes by induction on $m$. We prove the induction step from $m=1$ to $m=2$, the proof is
the same when $m\in \mathbb{N}$ is arbitrary. We only
treat the more interesting cases. Assume $R_1\in \ma{S}_{A}(\ma{A},\ma{B})$.\enlargethispage{\baselineskip}
\begin{enumerate}

\item $R_1=(\l xQ\star S) \ra_{\b} R_1'=Q[x:=S]$ and $R_2\leq R_1'$.
\begin{enumerate}

\item Suppose $R_1=U\rho$ for some $U\in
\ma{A}$ and $\rho \in \Pi_A(\ma{B})$. Then $U=(U_1\star U_2)$
with $U_1\rho =\l xQ$ and $U_2\rho = S$, and, since $\rho\in \S_{A}$, $U_1$ must be proper.
Then we have $U_1=\l xU_1'$ and
$U_1'\rho = Q$ for some $U_1'$. Now,
$R_1'=U_1'[x:=U_2]\rho\in \ma{S}_{A}(\ma{A},\ma{B})$, which yields $R_2\in \ma{S}_A(\ma{A},\ma{B})$.
\item Assume now $R_1=V\tau$. Then $V=(V_1\star V_2)$
with $V_1\tau =\l xQ$ and $V_2\tau = S$, and, since $\tau\in \S_{A^\bot}$, $V_2$ must be proper.
If $V_1$ is proper, then, as before, we obtain the result. Otherwise $V_1\tau =U\rho =\l xQ$.
Since $U\in \ma{A}$ is proper, $U=\l xU_1$ and
$U_1\rho =Q$ for some $U_1$. Then $U_1\rho_1\in \ma{S}_{A}(\ma{A},\ma{B})$ with $\rho_1=\rho + [x:=V_2\tau]$,
since $type(V_2\tau)=type(S)=A$. This implies $R_2\in \ma{S}_{A}(\ma{A},\ma{B})$.
\end{enumerate}
\item $R_1=(\lan Q_1,Q_2\ran\star \sigma_1(S))\ra_{\pi}(Q_1\star S) =R_1'$ and $R_2\leq
R_1'$.
\begin{enumerate}

\item
Assume $R_1=U\rho$ for some $U\in \ma{A}$ and $\rho \in \Pi_A(\ma{B})$. Then $U_1\rho=\lan Q_1,Q_2\ran$ and
$U_2\rho=\sigma_1(S)$.
\begin{itemize}
\item[-] Let $U_1$ and $U_2$ be proper. Then $U_1=\lan U_1',U_1''\ran$ and $U_2=\sigma_1(U_2')$
such that $U_1'\rho=Q_1$, $U_1''\rho=Q_2$ and $U_2'\rho=S$. We have $R_1'=(U_1'\star U_2')\rho\in\ma{S}_{A}(\ma{A},\ma{B})$,
which yields the result.
\item[-] Assume $U_2\in Var$. Then $V\tau =\sigma (S)$, and $cxty(type(S))<$

$cxty(type(\sigma S))=n$.
Then assumption $H$ and the fact that $\lan Q_1,Q_2\ran\in SN_{\b\pi}$, together with Lemma \ref{ch3:mx},
lead to $(Q_1\star S)\in SN_{\b\pi}$, which is not possible.
Since $\rho\in \Sigma_A$, $U_1\in Var$ is impossible.
\end{itemize}
\item Assume $R_1=V\tau$ for some $V\in \ma{B}$ and $\tau \in \Theta_{A^{\bot}}(\ma{A})$. Then $V_1\tau=\lan Q_1,Q_2\ran$ and
$V_2\tau=\sigma_1(S)$, where $V=(V_1\star V_2)$.
\begin{itemize}
\item[-] Let $V_1$ and $V_2$ be proper. Then $V_1=\lan V_1',V_1''\ran$ and $V_2=\sigma_1(V_2')$
such that $V_1'\tau=Q_1$, $V_1''\tau=Q_2$ and $V_2'\tau=S$. We have $R_1'=(V_1'\star V_2')\tau\in\ma{S}_{A}(\ma{A},\ma{B})$.
\item[-] Assume $V_1\in Var$. Then $U\rho =\lan Q_1,Q_2\ran$, where $cxty(type(Q_1))<$

$cxty(type(\lan Q_1,Q_2\ran))=n$.
Then assumption $H$ and the fact that $S\in SN_{\b\pi}$, together with Lemma \ref{ch3:mx}, lead to $(Q_1\star S)\in SN_{\b\pi}$,
which is not possible. Since $\tau\in\Sigma_{A^{\bot}}$, the case of $V_2\in Var$ is impossible.\qedhere
\end{itemize}
\end{enumerate}
\end{enumerate}
\end{proof}

\noindent The next lemma identifies the subterm of $U$ being responsible for the non strong normalizability of $U[x:=V]$.

\begin{lem} \label{ch3:lsmain}
Let $n$ be an integer and $A$ a type of length $n$. Assume the property $H$ ``if $U$,
$V\in SN_{\b\pi}$ and $cxty(type(V))<n$, then $U[x:=V]\in SN_{\b\pi}$'' holds.
\begin{enumerate}

\item Let $U$ be a proper term, $\si
\in \S_{A}$ and $a\notin Im(\si)$. If $U\si, P\in SN_{\b\pi}$ and $U\si [a:=P]\notin SN_{\b\pi}$, then
there exists $U'$ such that $(U'\star a)\preceq U$ and
$\si'\in \S_{A}$ such that $U'\si'\in SN_{\b\pi}$ and $(U'\si'\star a)[a:=P] \notin SN_{\b\pi}$.
\item Let $U$ be a proper term, $\si
\in \S_{A^{\bot}}$ and $a\notin Im(\si)$. If $U\si, P\in SN_{\b\pi}$ and $U\si [a:=P]\notin SN_{\b\pi}$, then
there exists  $U'$ such that $(a\star U')\preceq U$ and
$\si'\in \S_{A^\bot}$ such that $U'\si'\in SN_{\b\pi}$ and $(a\star U'\si')[a:=P] \notin SN_{\b\pi}$.
\end{enumerate}
\end{lem}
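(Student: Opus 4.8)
The plan is to prove part~(1); part~(2) is its mirror image under the left–right symmetry of the calculus (interchanging $\b$ with $\bb$, $\pi$ with $\pib$, and $A$ with $A^{\bot}$), so I would only spell out the former. Throughout, $U$ is proper, $\si\in\S_{A}$, $a\notin Im(\si)$, $U\si,P\in SN_{\b\pi}$ and $U\si[a:=P]\notin SN_{\b\pi}$, where $a$ and $P$ carry type $A$ and, after renaming if necessary, $a\notin Fv(P)\cup dom(\si)$. I may assume $P$ is proper: were $P$ a variable, $U\si[a:=P]$ would be a renaming of $U\si\in SN_{\b\pi}$, contradicting the hypothesis. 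The idea is to localise the redex responsible for non-normalisation with Lemma~\ref{ch3:nsnus}, to recognise the reduction reaching it as a zoom-in sequence living inside a structured substitution set, and finally to read $U'$ and $\si'$ off the membership delivered by Lemma~\ref{ch3:zoom}.

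First I would apply Lemma~\ref{ch3:nsnus} to $U\si\in SN_{\b\pi}$ and the substitution $[a:=P]$. This produces a term $P_0\preceq U\si$ and a zoom-in minimal sequence ${\mathcal R}=[R_1,\dots,R_m]$ together with a subterm $(P_0\star a)$ of the final term, such that $P_0[a:=P]\in SN_{\b\pi}$ while the exposed redex $B:=(P_0\star a)[a:=P]=(P_0[a:=P]\star P)\notin SN_{\b\pi}$; the symmetric orientation $(a\star P_0)$ is what feeds part~(2). Since $B$ has strongly normalizable immediate subterms but is itself not strongly normalizable, it is a minimal redex, and by the construction in Lemma~\ref{ch3:nsnus} it is exposed inside the reduct $R_m'$ of $R_m$; hence ${\mathcal R}'=[R_1,\dots,R_m,B]$ is again a zoom-in minimal sequence.

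Next I would install the bookkeeping of Definition~\ref{ch3:lssimsub}. Let $\ma{A}$ be the set of proper terms $W\preceq U$ and $\ma{B}$ the set of proper terms $W\preceq N$ for some $N\in Im(\si)\cup\{P\}$; both are $\preceq$-closed from below. Writing $\rho_0:=\si\cup[a:=P]$, every term substituted by $\rho_0$ lies in $\ma{B}$, is proper, and has type $A$, so $\rho_0\in\Pi_A(\ma{B})$ with all nested environments empty; therefore $U\si[a:=P]=U\rho_0\in\ma{S}_{A}(\ma{A},\ma{B})$ because $U\in\ma{A}$. As $R_1$ is a (proper) subterm of $U\rho_0$ and $\ma{S}_{A}(\ma{A},\ma{B})$ is closed under subterms, $R_1\in\ma{S}_{A}(\ma{A},\ma{B})$; and since the hypothesis $H$ assumed here is exactly the one required by Lemma~\ref{ch3:zoom}, that lemma propagates membership along ${\mathcal R}'$, giving $B\in\ma{S}_{A}(\ma{A},\ma{B})$.

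It remains to decompose $B$. Because $a\notin Im(\si)\cup Fv(P)$, the variable $a$ is untouched by the images of $\rho_0$, so — tracking the explicit witness produced in the proof of Lemma~\ref{ch3:zoom} — the membership $B\in\ma{S}_A(\ma{A},\ma{B})$ is realised by a witness $W\rho$ in which the $a$-binding of $\rho_0$ survives, that is, $W=(W_1\star a)\in\ma{A}$ with $W_1$ of type $A^{\bot}$ and $\rho\in\Pi_A(\ma{B})$ satisfying $\rho(a)=P$ and $W_1\rho=P_0[a:=P]$. Put $U':=W_1$ and let $\si'$ be $\rho$ with its $a$-binding deleted, an element of $\S_{A}$. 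Then $(U'\star a)=W\preceq U$, while $U'\si'=W_1\si'$ is a subterm of a reduct of $U\si\in SN_{\b\pi}$, hence lies in $SN_{\b\pi}$, and $(U'\si'\star a)[a:=P]=(W_1\si'[a:=P]\star P)=B\notin SN_{\b\pi}$ — exactly the assertion. The main obstacle is precisely this decomposition: one must exclude the degenerate configurations in which the complementary side $W_1$ is a variable, so that the redex would be built entirely from substitution images, and it is here — as in the variable subcases of the proof of Lemma~\ref{ch3:zoom} — that $H$ together with Lemma~\ref{ch3:mx} is indispensable, forcing such a redex to be strongly normalizable and thereby contradicting $B\notin SN_{\b\pi}$.
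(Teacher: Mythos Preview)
Your overall strategy is close to the paper's, but you deviate at precisely the point that makes the argument go through, and the resulting gap is not repaired by the hand-wave about ``tracking the explicit witness''. The paper takes $\ma{B}=\{V\mid V\preceq\si(b)\text{ for some }b\in dom(\si),\ V\text{ proper}\}$, \emph{excluding} $P$, and applies Lemma~\ref{ch3:zoom} at the level of $U\si$, so that the term delivered is $(U^{*}\star a)$ with $a$ still free. Then the decomposition is forced: if $(U^{*}\star a)=W\tau$ with $W\in\ma{B}$, the free occurrence of $a$ would have to come from $W\preceq\si(b)$, contradicting $a\notin Im(\si)$; hence $(U^{*}\star a)=S\rho$ with $S\in\ma{A}$, and since $\rho\in\S_{A}$ substitutes only proper terms, the right component of $S$ must be the variable $a$ itself, giving $S=(U'\star a)$ and $\si'=\rho$.

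By putting $P$ into $\ma{B}$ and passing to $B=(P_{0}[a:=P]\star P)$ you throw away the only piece of information that singles out $a$: the variable $a$ no longer occurs in $B$, so from $B\in\ma{S}_{A}(\ma{A},\ma{B})$ alone nothing prevents a decomposition $B=W\rho$ with $W=(W_{1}\star W_{2})$ and $W_{2}$ a proper term, or a variable $z\neq a$ with $\rho(z)=P$, or even $B=V\tau$ with $V\in\ma{B}$ coming from $P$ itself. Lemma~\ref{ch3:zoom} as stated gives only membership, not a canonical witness, and in its proof the witness can switch sides (case~(1)(b) passes from a $V\tau$-witness to a $U_{1}\rho_{1}$-witness built from the inner $\rho$, which need not carry the $a$-binding). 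There is a second, smaller problem: you append $B$ to the zoom-in sequence, but $B=(P_{0}[a:=P]\star P)$ is not in general a $\b\pi$-redex (neither component is forced to be an abstraction, a pair, or an injection), so $\mathcal{R}'$ need not be a zoom-in sequence at all. The remedy for both issues is exactly the paper's move: keep $P$ out of $\ma{B}$ and read off $(U^{*}\star a)$ before the substitution $[a:=P]$ is applied.
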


\begin{proof}
Let us consider only case (1). We identify the reason of $U\si [a:=P]$ being non strongly normalizable,
we find a subterm $(U'\star a)$ of a reduct of $U$ such that, for a substituted instance of $(U'\star a)$,
$(U'\star a)\si'\in SN_{\b\pi}$ and $(U'\star a)\si'[a:=P]\notin SN_{\b\pi}$. This will contradict by
some minimality assumption concerning $U$ in the next lemma. For this we define two sets
 of substitutions as in Definition \ref{ch3:lssimsub} with the sets $\mathcal{A}$ and
$\mathcal{B}$ as below. We note that Property H of the previous lemma implicitly ensures
that the type of $U$ and the type of the elements in $\si$ can be of the same lengths.\enlargethispage{\baselineskip}

Let
$$\mathcal{A}=\{M\,|\,M\preceq U \textrm{ and } M \textrm{ is proper} \},$$
$$\mathcal{B}=\{V\,|\,V\preceq \si (b)\textrm{ for some } b \in dom(\si )
\textrm{ and } V \textrm{ is proper} \}.$$
Then $U\si \in \ma{S}_{A}(\ma{A},\ma{B})$. By Lemma \ref{ch3:nsnus}, there exists a minimal zoom-in
${\mathcal R}=[R_1,\dots,R_n]$ and there are terms
$U^*$ and $V\preceq U\si$ such that $U\si [a:=P]\ra^{\mathcal R} V[a:=P]$ and $(U^*\star a)\leq V$ and
$(U^*\star a)\in SN_{\b\pi}$ and $(U^*\star a)[a:=P]\notin SN_{\b\pi}$ or $(a\star U^*)\leq V$ and $(a\star U^*)\in SN_{\b\pi}$
and $(a\star U^*)[a:=P]\notin SN_{\b\pi}$. Assume the former. By Lemma \ref{ch3:zoom},
$(U^*\star a)\in \ma{S}_{A}(\ma{A},\ma{B})$. Then $(U^*\star a)=S\rho$ for some $S \in \ma{A}$ or $(U^*\star a)=W\tau$
for some $W\in \ma{B}$. Since $a\notin Im(\si )$, the latter is impossible. The former case, however, yields $S=(U'\star a)$
with $U'\rho=U^*$ for some $U'\in \ma{A}$, which proves our assertion.
\end{proof}

The next lemma states closure of strong normalizability under substitution.

\begin{lem}\label{ch3:sn}
If $M,N\in SN_{\b\pi}$, then $M[x:=N]\in SN_{\b\pi}$.
\end{lem}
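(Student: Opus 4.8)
The plan is to prove the statement by induction on $n = cxty(type(N))$, so that the induction hypothesis is exactly the property $H$ invoked in Lemmas \ref{ch3:zoom} and \ref{ch3:lsmain}: substitution of a strongly normalizable term of type-complexity strictly below $n$ into a strongly normalizable term stays in $SN_{\b\pi}$. Having fixed $n$ and assumed $H$, I would argue by contradiction: choose a counterexample, that is $M,N\in SN_{\b\pi}$ with $cxty(type(N))=n$ and $M[x:=N]\notin SN_{\b\pi}$, with $\eta c(M)$ minimal among all such counterexamples for this $n$.

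First I would localize the failure. A variable $M$ is harmless (then $M[x:=N]$ is $N$ or a variable), so $M$ is proper and Lemma \ref{ch3:lsmain} applies with $\si=\emptyset$. In this case $\ma{B}=\emptyset$ in Definition \ref{ch3:lssimsub}, so the substitution $\si'$ it returns is empty and $\ma{S}_A(\ma{A},\ma{B})$ collapses to the proper subterms of reducts of $M$; I thus obtain a subterm $(U'\star x)\preceq M$ (or symmetrically $(x\star U')$) with $U'\in SN_{\b\pi}$ and $(U'\star x)[x:=N]\notin SN_{\b\pi}$. By Lemma \ref{ch3:mx} one has $(U'\star x)\in SN_{\b\pi}$, hence $((U'\star x),N)$ is again a counterexample; were $(U'\star x)\prec M$ strict, then $\eta c(U'\star x)<\eta c(M)$ would contradict minimality. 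So $M=(U'\star x)$, and in particular $U'[x:=N]\in SN_{\b\pi}$ by minimality, since $\eta c(U')<\eta c(M)$.

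Now $(U'[x:=N]\star N)\notin SN_{\b\pi}$ while both immediate subterms lie in $SN_{\b\pi}$, so Lemma \ref{ch3:nsn} produces reducts $U'[x:=N]\ras P$ and $N\ras Q$ with $(P\star Q)$ a non-$SN_{\b\pi}$ redex, which I would split by its rule. In the $\pi$ and $\pi^{\bot}$ cases the redex is $(\lan P_1,P_2\ran\star\si_i(Q_i))$ and contracts to $(P_i\star Q_i)$, whose two sides have type-complexity strictly below $n$; since $(P_i\star z)\in SN_{\b\pi}$ for a fresh variable $z$ by Lemma \ref{ch3:mx}, property $H$ gives $(P_i\star Q_i)=(P_i\star z)[z:=Q_i]\in SN_{\b\pi}$, a contradiction. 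In the $\b$ case the left side $P$ is an abstraction; by Lemma \ref{ch3:ml}, and because substituting $x$ cannot create a head $\l$ (its type $A$ differs from $A^{\bot}$), already $U'=\l yU_1'$, so that $M=(\l yU_1'\star x)$ is itself a $\b$-redex. Its one-step contractum $U_1'[y:=x]$ satisfies $\eta c(U_1'[y:=x])<\eta c(M)$, so minimality yields $U_1'[y:=x][x:=N]=U_1'[x:=N,y:=N]\in SN_{\b\pi}$; as $(P\star Q)$ contracts to a reduct of this term, it is strongly normalizable, contradiction.

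The hard part is the dual $\b^{\bot}$ case, in which the abstraction sits on the $N$-side: $N=\l zN_1$ and the redex $(P\star\l zW)$ feeds an $M$-side reduct $P$ into the body of $N$. Here no type-complexity drops, so $H$ is useless; $M$ is not a redex, so the trick of the previous paragraph does not apply; and $P$ is a reduct of $U'[x:=N]$ whose measure is not bounded by $\eta c(M)$, so a naive symmetric minimality on $\eta c(M)+\eta c(N)$ fails to descend. This is precisely the \emph{deep interaction} between material originating in $M$ and in $N$ that the alternating substitution of Definition \ref{ch3:lssimsub} was built to control. I would therefore re-localize $P$ by re-applying Lemma \ref{ch3:lsmain}, using the invariance Lemma \ref{ch3:zoom} to guarantee that the relocated redex still lies in some $\ma{S}_A(\ma{A},\ma{B})$ and is hence attributable to a strictly smaller instance, the descent being carried by the finiteness of the minimal zoom-in sequence of Lemma \ref{ch3:nsnus} rather than by a single numerical measure. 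Discharging this case, together with the symmetric $(x\star U')$ branch, is where essentially all the work of the section concentrates.
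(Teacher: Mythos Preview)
Your outline is sound through the $\pi$, $\pi^{\bot}$ and $\b$ cases, but the $\b^{\bot}$ case is a genuine gap, and you essentially acknowledge as much. After the $\b^{\bot}$-contraction you are facing $N_1'[z:=P]\notin SN_{\b\pi}$ with $N_1'\prec N$ and $P$ a reduct of $U'[x:=N]$. Your only well-founded parameter is $\eta c(M)$; but the new ``base'' term is $N_1'$, not a $\preceq$-descendant of $M$, and the new ``substituted'' term $P$ can be arbitrarily large, so neither $\eta c(M)$ nor any symmetric combination $\eta c(M)+\eta c(N)$ decreases. Re-applying Lemma \ref{ch3:lsmain} with $\si=\emptyset$ only shifts the problem: you get $(P\star N_1'')\notin SN_{\b\pi}$ with $N_1''\prec N_1'$, but the left component is still the uncontrolled $P$. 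Invoking ``finiteness of the minimal zoom-in sequence'' does not help either: that finiteness is relative to a fixed term $U[x:=S]$, whereas each $\b^{\bot}$-step hands you a \emph{new} substitution instance, and there is no bound on how many such hand-offs occur.

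The paper closes exactly this gap by \emph{strengthening the inductive statement} rather than the argument: it proves directly that $M\rho\in SN_{\b\pi}$ for $\rho=[x_1:=N_1\t_1,\dots,x_k:=N_k\t_k]$ with $N_i$ of type $A$ and $\t_i\in\Sigma_{A^{\bot}}$, by lexicographic induction on $(cxty(A),\eta_{\b\pi}(M),cxty(M),\Sigma_i\,\eta_{\b\pi}(N_i),\Sigma_i\,cxty(N_i))$. In the $\b^{\bot}$-like case ($M_1=x$, $\rho(x)=N_j\t_j$ with $N_j=\l zN'$), Lemma \ref{ch3:lsmain} is applied with the \emph{nontrivial} $\si=\t_j\in\Sigma_{A^{\bot}}$, returning $N''\prec N'$ and $\t'\in\Sigma_{A^{\bot}}$; the offending term is then rewritten as $(y\star M_2)\rho'$ with $\rho'=\rho+[y:=N''\t']$, which is again of the required shape. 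The first three components of the measure are unchanged, but one occurrence of $N_j$ has been replaced by $N''$ with $\eta c(N'')<\eta c(N_j)$, so the last two components drop. The point is that carrying the alternating substitution in the hypothesis lets the ``huge'' $P=M_2\rho$ stay on the $M$-side of the ledger while the descent happens on the $N$-side; your single-substitution formulation cannot express this bookkeeping, which is why your $\b^{\bot}$ case stalls.
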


\begin{proof} We are going to prove a bit more general statement. Suppose
$M, N_{i}\in SN_{\b\pi}$ are proper, $type(N_{i})=A$ $(1\leq i\leq k)$.
Let $\t _{i}\in
\Sigma_{A^{\bot}}$ are such that $\t _{i}\in
SN_{\b\pi}$ $(1\leq i\leq k)$ and let $\r =[x_{1} :=N_{1}\t _{1},\ldots
,x_{k} :=N_{k}\t _{k}]$. Then we have $M \r \in SN_{\b\pi}$.
The proof is by induction on $(cxty(A), \eta_{\b\pi}(M), cxty(M)$, $\Sigma_i \;
\eta_{\b\pi}(N_i), \Sigma_i \; cxty(N_i))$ where, in $\Sigma_i \; \eta_{\b\pi}(N_i)$
and $\Sigma_i \; cxty(N_i)$, we count each occurrence of the
substituted variable. For example if $k=1$ and $x_1$ has $n$
occurrences, then $\Sigma_i \; \eta_{\b\pi}(N_i)=n\cdot \eta_{\b\pi}(N_1)$.

The only nontrivial case is when $M=(M_{1}\sta M_{2})$ and
$M\r\notin SN_{\b\pi}$. By the induction hypothesis $M_{i}\r\in SN_{\b\pi}$ $(i\in
\{1,2\})$. We select some of the typical cases.

\begin{enumerate}

\item[(A)] $M_{1}\r \ra_{\b\pi} \l zM'$ and $M'[z:=M_2]\notin SN_{\b\pi}$.

\begin{enumerate}

\item[1.] $M_{1}$ is
proper, then there is an $M_{3}$ such that $M_{1}=\l zM_{3}$ and
$M_{3}\r \ra_{\b\pi} M'$. In this case $(M_{3}[z:=M_{2}])\r \notin SN_{\b\pi}$
and since $\eta_{\b\pi}(M_{3}[z:=M_{2}])<\eta_{\b\pi}(M)$, the induction
hypothesis gives the result.

\item[2.] $M_{1}\in Var$. Then $M_{1}=x\in dom(\r)$,
$\r(x)=N_{j}\t_{j}\ra_{\b\pi} \l zM'$ for some $(1\leq j\leq k)$. Since
$N_{j}$ is proper, there is an $N'$ such that $N_{j}=\l zN'$,
$N'\t_{j}\ra_{\b\pi} M'$. Then $N'\t_{j}[z:=M_{2}\r]\notin SN_{\b\pi}$ and
$type(z)={type({N_{j}})}^{\bot}=type(\t_{j})$, so, by the previous
lemma, we have $N''\prec N'$ and $\t '$ such that $(N''\t
'\sta M_{2}\r )\notin SN_{\b\pi}$.  Now we have $(N''\t '\sta M_{2}\r
)=(y\sta M_{2}\r )[y:=N''\t ']$, $type(N'')={type(\t ')}^{\bot}=A$
and $\eta_{\b\pi} cxty(N'')<\eta c(N_{j})$, which contradicts the induction
hypothesis.

\end{enumerate}

\item[(B)] $M_{1}\r \ra_{\b\pi} \lan M',M''\ran  $ and either
$(M'\sta M_2)\notin SN_{\b\pi}$ or $(M''\sta
M_2)\notin SN_{\b\pi}$. Suppose the former.

\begin{enumerate} 

\item[1.] $M_{1},M_{2}$ are proper, then there are
$M_{3},M_{4}$ such that $M_{1}=\lan M_{3},M_{4}\ran$
and $M_{3}\r \ra_{\b\pi} M'$, or $M_{4}\r \ra_{\b\pi} M''$. Assume the former.
Then we have $(M_{3}\sta M_{2})\r \notin SN_{\b\pi}$ and
$\eta_{\b\pi}((M_{3}\sta M_{2}))<\eta_{\b\pi}(M)$, a contradiction.

\item[2.] $M_{1}=x\in dom(\r)$, then $\r (x)=N_{j}\t_{j}\ra_{\b\pi} \lan M',M''\ran$,
$N_{j}$ is proper and $N_{j}=\lan U,V\ran
$, $U\t_{j}\ra_{\b\pi} M'$ or $V\t_{j}\ra_{\b\pi} M''$. Now $(U\t_{j}\sta
M_2)=(y\sta M_2)[y:=U\t_{j}]\notin SN_{\b\pi}$, but
$cxty(type(U))<cxty(type(N_{j}))$, a contradiction again.

\item[3.] $M_{2}\in Var$. This is similar to the previous case. By the
same argument as in part (A)-2.-(a) of the proof of the previous
lemma, $M_{1}$ and $M_{2}$ cannot be both variables. This completes
the proof of the lemma.\qedhere
\end{enumerate}
\end{enumerate}
\end{proof}

\begin{thm} \label{ch3:snt}
The $\l_{\b\pi}$-calculus is strongly normalizing.
\end{thm}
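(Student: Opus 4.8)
The plan is to derive the theorem from the substitution lemma (Lemma~\ref{ch3:sn}) by structural induction on typable terms, the only genuine difficulty being the cut case $(P_1\star P_2)$. First I would record the elementary fact that, since $\ra_{\b\pi}$ is finitely branching, a term lies in $SN_{\b\pi}$ as soon as all of its one-step reducts do. The abstraction, pairing and injection constructors admit no top-level redex in the $\l_{\b\pi}$-calculus, so for $\l xP$, $\lan P_1,P_2\ran$ and $\si_i(P)$ every reduction takes place inside the immediate subterms; hence these terms lie in $SN_{\b\pi}$ whenever their subterms do, and the structural induction hypothesis disposes of these cases at once. Everything therefore reduces to the following claim: if $P,Q\in SN_{\b\pi}$ have complementary types, i.e. $P:A^\bot$ and $Q:A$ for some $m$-type $A$, then $(P\star Q)\in SN_{\b\pi}$.

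To prove the claim I would argue by a nested induction, the outer one on $cxty(A)$ and the inner one on $\eta_{\b\pi}(P)+\eta_{\b\pi}(Q)$ (both well defined because $P,Q\in SN_{\b\pi}$), showing that every one-step reduct of $(P\star Q)$ lies in $SN_{\b\pi}$. An internal reduct $(P'\star Q)$ or $(P\star Q')$ keeps the same cut type $A$ while strictly decreasing $\eta_{\b\pi}(P)+\eta_{\b\pi}(Q)$, so the inner hypothesis applies. If $(P\star Q)$ is a $\b$- or $\b^\bot$-redex, its contractum is a substitution instance $R[x:=Q]$ (resp.\ $R[x:=P]$) with $R$ and the substituted term in $SN_{\b\pi}$, hence it lies in $SN_{\b\pi}$ by Lemma~\ref{ch3:sn}. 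Finally, if $(P\star Q)$ is a $\pi$- or $\pi^\bot$-redex, necessarily $P=\lan R_1,R_2\ran$ and $Q=\si_i(S)$ with $(P\star Q)\ra_\pi(R_i\star S)$; here $R_i$ and $S$ are subterms of the $SN_{\b\pi}$-terms $P$ and $Q$, hence themselves in $SN_{\b\pi}$, they have complementary types, and their cut type has complexity $cxty(A_i)<cxty(A_1)+cxty(A_2)+1=cxty(A^\bot)=cxty(A)$, so the outer hypothesis yields $(R_i\star S)\in SN_{\b\pi}$. As all immediate reducts are in $SN_{\b\pi}$, so is $(P\star Q)$, and feeding the claim back into the structural induction completes the proof.

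The crux of the whole argument is that the measure $(cxty(A),\,\eta_{\b\pi}(P)+\eta_{\b\pi}(Q))$, ordered lexicographically, is well founded and is respected by each of the three classes of reducts: internal steps are controlled by the reduction-length component, the logical $\b$-steps are absorbed by the already-established substitution lemma (so that no circularity with the type induction arises), and the permutative $\pi$-steps are the only ones that alter the term at the cut but strictly lower the type component. I expect the delicate point to be exactly this bookkeeping, namely verifying that the $\b$-contractum is genuinely an instance to which Lemma~\ref{ch3:sn} applies and that the $\pi$-contractum always has strictly smaller cut type; the symmetric rules $\b^\bot$ and $\pi^\bot$ are then handled verbatim.
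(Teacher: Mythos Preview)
Your argument is correct, but the paper dispatches the theorem more briefly. Rather than running a nested induction on $(cxty(A),\,\eta_{\b\pi}(P)+\eta_{\b\pi}(Q))$ to handle the cut case, the paper simply observes that $(M\star N)=(M\star x)[x:=N]$ for a fresh variable $x$; since $(M\star x)\in SN_{\b\pi}$ by Lemma~\ref{ch3:mx}, one application of Lemma~\ref{ch3:sn} gives $(M\star N)\in SN_{\b\pi}$ outright. Your separate treatment of the $\pi$-contractum via the outer type induction is in effect re-running work already built into the proof of Lemma~\ref{ch3:sn}, whose own induction has $cxty(A)$ as its outermost parameter. The paper's trick reuses that machinery wholesale at the price of invoking the small auxiliary Lemma~\ref{ch3:mx}; your route avoids that lemma and makes the role of the cut type in controlling $\pi$-redexes explicit, but at the cost of a longer argument.
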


\begin{proof} It is enough to show that,
for every term, $M$, $N\in SN_{\b\pi}$ implies $(M\sta N)\in SN_{\b\pi}$.
Supposing $M,N\in SN_{\b\pi}$, Lemma \ref{ch3:mx} gives $(M\sta x)\in SN_{\b\pi}$,
which yields, by the previous lemma, $(M\sta N)=(M\sta x)[x:=N]\in
SN_{\b\pi}$.\end{proof}
\newpage

\section{The $\lmt$- and the $\lmts$-calculus}\label{section3}

In this section, we introduce the $\lmt$-calculus together with one of its extensions,
the $\lmts$-calculus, by which we establish a translation of the $\ls$-calculus and thus obtain
the strong normalization of the $\lmts$-calculus as a consequence.

\subsection{The $\lmt$-calculus}

The $\lmt$-calculus was introduced by Curien and Herbelin
(\cite{Her} and \cite{Cur-Her}). We examine here the calculus defined
by Curien et al. \cite{Cur-Her}, which is a simply typed one. The
$\lmt$-calculus was invented for representing proofs in classical
Gentzen-style sequent calculus: under the Curry-Howard
correspondence a version of Gentzen-style sequent calculus is
obtained as a system of simple types for the $\lmt$-calculus.
Moreover, the system presents a clear duality between
call-by-value and call-by-name evaluations.

\begin{defi}
There are three kinds of terms, defined by the following grammar,
and there are two kinds of variables.
We assume that we use the same set of variables in the $\lmts$-calculus, too.
In the literature, different authors use different terminology. Here, we will call them either $c$-terms, or $l$-terms
or $r$-terms. Similarly, the variables will be called either
$l$-variables (and denoted as $x,y,...$) or  $r$-variables (and denoted as $a, b, ...$).

\[
\begin{array}{ccccccccc}
p   &::=    & \lfl t , e \rfl & \, &\, &\, &\, &\, &\, \\
t   &::=    & x &\mid & \l x t &\mid &\mu \a  p &\, &\, \\
e   &::=    &\a &\mid & (t.e)  &\mid & \tilde{\mu}x  p &\, &\,
\end{array}
\]
As usual, we denote by $Fv(u)$, the set of the free variables of the term $u$.
\end{defi}

\begin{defi}
The types are built from atomic formulas
(or, in other words, atomic types) with the connector
$\rightarrow$.
We assume that the same set of type variables ${\mathcal A}$ is used in the $\lmts$-calculus, also.
The typing system is a sequent calculus based on
judgements of the following form.
\begin{center}
$p : (\G \;\vv\; \D )$   $\;\;\;\;\;\;\;\;\;$ $\G \;\vv\; t : A\;
|\;
\D$ $\;\;\;\;\;\;\;\;\;$ $\G\; |\; e : A \;\vv\; \D$
\end{center}
where $\G$ (resp. $\D$) is a set of declarations of the form $x :
A$ (resp. $a : A$), $x$ (resp. $a$) denoting a $l$-variable (resp.
an $r$-variable) and $A$ representing a type, such that $x$ (resp.
$a$) occurs at most once in an expression of $\G$ (resp. $\D$) of the
form $x:A$ (resp. $a: A$). We say that $\G$ an $l$-context and $\D$ is an $r$-context, respectively.
The typing rules are as follows
\[
\begin{array}{ll}

\begin{minipage}[t]{180pt}
$Var_1 \;\;\;\F{}{\G, x : A \;\vv\; x : A\; |\; \D }$\\
\end{minipage}
&
\begin{minipage}[t]{180pt}
$Var_2 \;\;\;\F{}{\G  \; |\; \a : A\;\vv\; \a : A , \D }$\\[0.3cm]
\end{minipage}
\\

\begin{minipage}[t]{180pt}
$\l \;\;\;\F{\G, x : A \;\vv\; t : B\; |\;  \D}{\G\;\vv\; \l x t : A \ra B\; |\;  \D}$\\
\end{minipage}
&
\begin{minipage}[t]{180pt}
$(.) \;\;\;\F{\G \;\vv\; t : A\; |\;  \D \;\;\; \G \; |\;   e : B \;\vv\;
    \D}{\G  \; |\; (t.e) : A \ra B \;\vv\; \D}$\\[0.1cm]
\end{minipage}
\end{array}
\]

$$\lfl ,\rfl \;\;\;\F{\G \;\vv\; t : A \; |\;  \D \;\;\; \G \; |\; e : A\;\vv\;
\D}{\lfl t , e \rfl : (\G \;\vv\; \D )}$$

\[
\begin{array}{ll}
\begin{minipage}[t]{180pt}
$\mu \;\;\;\F{p : (\G \;\vv\; \a : A , \D)}{\G \;\vv\; \mu \a p : A\; |\;  \D}$\\
\end{minipage}
&
\begin{minipage}[t]{180pt}
$\tilde{\mu} \;\;\;\F{p : (\G , x : A \;\vv\;  \D)}{\G \; |\; \tilde{\mu} x p : A\;\vv\; \D}$\\
\end{minipage}
\end{array}
\]
\end{defi}
\newpage

\begin{defi}
The cut-elimination procedure (on the logical side) corresponds to
the reduction rules (on the terms) given below.\\

\begin{tabular}{ l l l l l}
$(\l)$ & $\lfl \l x  t , (t'.e) \rfl$ & $\raa_{\;\l}$ & $\lfl t', \tilde{\mu} x \, \lfl t , e \rfl \rfl$ \\
$(\mu)$ & $\lfl \mu \a p , e \rfl$ & $\raa_{\;\mu}$ & $p[\a := e]$ & $\;$ \\
$(\tilde{\mu})$ & $\lfl  t , \tilde{\mu} x p \rfl$ & $\raa_{\;\tilde{\mu}}$ & $p[x:= t]$ & \\
$(s_l)$ & $\mu \a \lfl t , \a \rfl$ & $\raa_{\;s_l}$ & $t$ & ${\rm if} \; \a \not \in Fv(t)$\\
$(s_r)$ & $\tilde{\mu} x \lfl  x ,  e \rfl$ & $\raa_{\;s_r}$ &  $e$ & ${\rm if} \; x \not \in Fv(e)$\\
\end{tabular}
\item Let us take the union of the above rules.
Let $\raa$ stand for the compatible closure of this union and, as usual, $\raa^*$ denote the reflexive, symmetric and transitive
closure of $\raa$. The notions of reduction sequence, normal form and normalization are defined with respect to $\raa$.
\end{defi}

We present below some theoretical properties of the $\lmt$-calculus
(Herbelin \cite{Her}, Curien and Herbelin \cite{Cur-Her}, de Groote \cite{de Gro2}, Polonovski \cite{Poi} and David and Nour \cite{Dav-Nou4}).

\begin{prop}[Type-preservation property]\label{ch4:pres}
If $\G\;\vv\; t:A\;|\;\D$ (resp.  $\G\;|\;e:A\;\vv\;\D$, resp.
$p:(\G\;\vv\;\D)$) and $t \raa^* t'$ (resp. $e\raa^* e'$, resp. $p \raa^*
p'$), then $\G\;\vv\; t':A\;|\;\D$ (resp. $\G\;|\;e':A\;\vv\;\D$, resp.
$p':(\G\;\vv\;\D)$).
\end{prop}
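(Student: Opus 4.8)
The statement to prove is the Type-preservation property (Proposition~\ref{ch4:pres}) for the $\lmt$-calculus: each reduction rule preserves the typing judgement.

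The plan is to prove the single-step case, that one application of a reduction rule at the root preserves typing, and then lift to arbitrary $\raa^*$ by induction on the length of the reduction together with a congruence argument for reductions occurring inside subterms. Since the three syntactic categories ($c$-terms $p$, $l$-terms $t$, $r$-terms $e$) are mutually recursive and the typing judgements come in three corresponding shapes, I would set up the whole argument as a simultaneous induction over the three kinds of expressions. The base step is to check each of the five rewrite rules $(\l)$, $(\mu)$, $(\tilde\mu)$, $(s_l)$, $(s_r)$ individually: assume a typing derivation of the left-hand side and exhibit a derivation of the right-hand side with the same $\G$, $\D$ and (where applicable) type $A$.

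The rule-by-rule check is mostly routine inversion of the typing rules. For $(\l)$, a derivation of $\lfl \l x t, (t'.e)\rfl$ must end in $\lfl\,,\rfl$ with premises typing $\l x t : A\ra B$ and $(t'.e) : A\ra B$; inverting the $\l$ and $(.)$ rules recovers derivations of $\G,x{:}A\vv t:B\mid\D$, $\G\vv t':A\mid\D$ and $\G\mid e:B\vv\D$, and these are exactly what is needed to rebuild $\lfl t', \tilde\mu x\,\lfl t,e\rfl\rfl$ via $\lfl\,,\rfl$, $\mu$ (through $\tilde\mu$) and $\lfl\,,\rfl$ again. The structural rules $(s_l)$ and $(s_r)$ follow from the side condition $\a\notin Fv(t)$ (resp. $x\notin Fv(e)$), which guarantees the corresponding declaration in $\D$ (resp. $\G$) is never used, so it may be discarded. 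The genuine content lies in the two substitution rules $(\mu)$ and $(\tilde\mu)$: for these I would first establish a \emph{substitution lemma} stating that if $p:(\G\vv\a{:}A,\D)$ and $\G\mid e:A\vv\D$, then $p[\a:=e]:(\G\vv\D)$, and dually for $[x:=t]$. I expect the main obstacle to be exactly this substitution lemma, since it must itself be proved by a simultaneous induction over the three term categories, and one has to track how substituting an $r$-variable propagates through $\mu$, $\tilde\mu$ and $\lfl\,,\rfl$ while respecting variable-capture conventions.

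Once the single-step-at-root cases and the substitution lemma are in hand, the congruence closure is immediate: if a redex is reduced inside a subterm, the simultaneous induction hypothesis supplies a typing derivation of the reduced subterm with the same context and type, and one simply re-applies the same final typing rule that introduced the surrounding constructor. Finally, induction on $lg$ extends the result from $\raa$ to $\raa^*$. I would record the full rule-by-rule verification as routine and highlight only the substitution lemma as the step carrying the real weight.
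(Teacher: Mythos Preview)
The paper does not supply its own proof of this proposition; it is stated as a known property of the $\lmt$-calculus with references to Herbelin, Curien--Herbelin, de~Groote, Polonovski, and David--Nour, so there is nothing to compare against directly. Your outline is the standard, correct argument: a substitution lemma proved by simultaneous induction over the three syntactic categories, a case analysis of the five root rules using inversion of the typing rules, and then closure under compatible context and transitivity. One small point worth making explicit: for $(s_l)$ and $(s_r)$ you say the unused declaration ``may be discarded'', but this is a \emph{strengthening} lemma (if $\a\notin Fv(t)$ and $\G\vv t:A\mid \a{:}B,\D$ then $\G\vv t:A\mid\D$), which itself needs a short induction on derivations; it is routine because the axiom rules carry arbitrary contexts, but it should be stated rather than assumed.
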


\begin{prop}[Subformula property]\label{ch4:subform}
If $\Pi$ is a derivation of $\G\;\vv\; t:A\;|\;\D$ (resp.  $\G\;|\;e:A\;\vv\;\D$, resp.
$p:(\G\;\v\;\D)$) and $t$ (resp. $e$, resp. $p$) is in normal
form, then every type occurring in $\Pi$ is a subformula of a type
occurring in $\G \cup \D$, or a subformula of $A$ (only for $t$ and $e$).
\end{prop}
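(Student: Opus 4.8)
The plan is to argue by simultaneous induction on the structure of the three kinds of normal forms, $t$, $e$ and $p$ (equivalently, on the height of $\Pi$). In each typing rule \emph{except} the cut rule $\lfl ,\rfl$, every type appearing in a premise is either a subformula of the type displayed in the conclusion or already belongs to the conclusion context, so the inductive hypothesis transfers upward without difficulty. The only rule that can introduce a genuinely new type is the cut $\lfl ,\rfl$, whose cut formula $A$ is a priori unrelated to the conclusion $(\G \vv \D)$. The whole argument therefore reduces to controlling this cut formula under the assumption that the command is in normal form.

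The key observation, which I would isolate as a preliminary remark, is that in a \emph{normal} command $\lfl t , e \rfl$ at least one of $t$, $e$ is a variable. Indeed, normality forbids $t = \mu \a\, p'$ (a $\mu$-redex) and forbids $e = \tilde{\mu} x\, p'$ (a $\tilde{\mu}$-redex), so necessarily $t \in \{x, \l x t'\}$ and $e \in \{\a, (t'.e')\}$; and the remaining combination $\lfl \l x t' , (t'.e') \rfl$ is a $\l$-redex, hence also excluded. Thus a normal command has one of the forms $\lfl x , \a \rfl$, $\lfl x , (t'.e') \rfl$ or $\lfl \l x t' , \a \rfl$, and in each case either $t$ is an $l$-variable or $e$ is an $r$-variable. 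By $Var_1$ (resp. $Var_2$) the type of that variable is its declared type in $\G$ (resp. $\D$), so the cut formula $A$ occurs in $\G \cup \D$.

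With this in hand the induction is routine. For the base cases $t = x$ and $e = \a$ the derivation is a single axiom and the only type, $A$, is simultaneously the conclusion type and a context type. For the introduction rules $\l$ and $(.)$ the premise types $A$, $B$ are subformulas of the conclusion type $A \ra B$, and for $\mu$ and $\tilde{\mu}$ the type of the bound variable in the premise sequent coincides with the conclusion type; in all four cases the inductive hypothesis lifts directly by transitivity of the subformula relation. For a normal command $p = \lfl t , e \rfl$ obtained by cut from $\G \vv t : A \mid \D$ and $\G \mid e : A \vv \D$, the preliminary remark gives that $A$ occurs in $\G \cup \D$; applying the inductive hypothesis to the two subderivations then shows that every type in them is a subformula of a type in $\G \cup \D$ or of $A$, and since $A$ itself already occurs in $\G \cup \D$ the exceptional clause collapses and the command case closes. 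The only delicate point is the structural analysis of normal commands; once that is secured, no further reasoning about the cut formula is required.
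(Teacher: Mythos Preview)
The paper does not actually prove this proposition: it is stated as one of several ``theoretical properties of the $\lmt$-calculus'' with references to Herbelin, Curien--Herbelin, de Groote, Polonovski, and David--Nour, but no argument is given in the paper itself. Your proof is therefore strictly more than what the paper supplies.

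Your argument is correct. The structural analysis of normal commands is exactly the right hinge: since $\lfl \mu\a\,p', e\rfl$, $\lfl t, \tilde{\mu}x\,p'\rfl$ and $\lfl \l x\,t', (t''.e')\rfl$ are respectively $\mu$-, $\tilde{\mu}$- and $\l$-redexes, a normal command must have a variable on at least one side, which forces the cut formula $A$ to appear in $\G\cup\D$. With that established, the remaining inductive cases are indeed routine, and the transitivity of the subformula relation closes each of them as you indicate. This is the standard proof one would expect for a sequent calculus with cut; you have identified precisely the one non-trivial point.
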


\begin{thm}[Strong normalization property]\label{SN2}
If $\G\;\vv\; t:A\;|\;\D$ (resp.  $\G\;|\;e:A\;\vv\;\D$, resp.
$p:(\G\;\vv\;\D)$), then $t$ (resp. $e$, resp. $p$) is strongly normalizable,
i.e. every reduction sequence starting from $t$
(resp. $e$, resp. $p$) is finite.
\end{thm}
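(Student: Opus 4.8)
The plan is not to reprove Theorem~\ref{SN2} from scratch but to deduce it from the already-established strong normalization of the $\ls$-calculus (Theorem~\ref{SN1}), exploiting the translation between $\lmts$ and $\ls$ set up in Section~\ref{section4}. First I would remark that the $\lmt$-calculus is literally a fragment of the $\lmts$-calculus: its $c$-, $l$- and $r$-terms are $\lmts$-terms, and its reduction rules $\l,\mu,\mut,s_l,s_r$ are among those of $\lmts$. Consequently, strong normalization of $\lmts$ entails that of $\lmt$, since restricting an infinite $\lmts$-reduction to a $\lmt$-term yields an infinite $\lmt$-reduction. It therefore suffices to prove that every typed term of $\lmts$ is strongly normalizing.

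The core of the argument is the translation $(\cdot)^{\circ}$ from $\lmts$ into $\ls$ constructed in Section~\ref{section4}. The property I would establish is \emph{strict reduction preservation}: if $u\raa v$ in $\lmts$, then $u^{\circ}\ra^{+}v^{\circ}$ in $\ls$, that is, each $\lmts$-step is mirrored by at least one $\ls$-step. Granting this, any infinite reduction $u_{0}\raa u_{1}\raa u_{2}\raa\cdots$ in $\lmts$ would produce an infinite reduction $u_{0}^{\circ}\ra^{+}u_{1}^{\circ}\ra^{+}u_{2}^{\circ}\ra^{+}\cdots$ in $\ls$, contradicting Theorem~\ref{SN1}. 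Because Theorem~\ref{SN1} asserts strong normalization for the \emph{full} $\ls$-calculus --- including the $\eta$, $\eta^{\bot}$ and $Triv$ rules --- it is immaterial here which kind of $\ls$-redex each $\lmts$-step is mapped to; only the strict positivity of the step count matters.

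The main obstacle is precisely this strict positivity. The logical rules $\l,\mu,\mut$ should unproblematically translate to genuine $\b$/$\bb$/$\pi$-activity, but the simplification rules $s_l$ and $s_r$ (namely $\mu\a\lfl t,\a\rfl\raa t$ and $\mut x\lfl x,e\rfl\raa e$) are $\eta$-like, and a careless translation might send such a step to zero $\ls$-steps, which would invalidate the contradiction. I would therefore design $(\cdot)^{\circ}$ so that each $s_l$/$s_r$ step is matched by at least one $\ls$-step (an $\eta$- or $Triv$-step already suffices, by the remark above), and then check the remaining rules case by case. A secondary, routine point is that $(\cdot)^{\circ}$ respects typing, so that by Proposition~\ref{ch4:pres} and the typing rules of $\ls$ each image $u_{i}^{\circ}$ is a well-typed $\ls$-term and Theorem~\ref{SN1} genuinely applies.
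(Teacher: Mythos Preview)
Your overall strategy---deduce SN of $\lmt$ from SN of $\ls$ via the translation of Section~\ref{section4}---is sound, and indeed the paper carries out essentially this argument, though not at the point where Theorem~\ref{SN2} is stated: there the paper simply cites Polonovski and David--Nour, while the translation-based proof appears later as the corollary immediately following Theorem~\ref{ch5:simlmtsls}.

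However, your execution has a gap. You first reduce to proving SN of the full $\lmts$-calculus and then assert \emph{strict} reduction preservation: every $\lmts$-step $u\raa v$ yields $u^{\circ}\ra^{+}v^{\circ}$. Your case analysis mentions $\l,\mu,\mut,s_l,s_r$ but omits the three additional $\lmts$-rules $cl_{1,l}$, $cl_{1,r}$, $cl_{2}$, and for these strict preservation is simply false under the paper's translation $(\cdot)^{\mathfrak e}$: one has $(\overline{\widetilde t})^{\mathfrak e}=t^{\mathfrak e}$ and $\lfl\overline e,\widetilde t\rfl^{\mathfrak e}=(t^{\mathfrak e}\star e^{\mathfrak e})\sim\lfl t,e\rfl^{\mathfrak e}$, so a $cl$-step produces either the identical term or merely a $\sim$-swap on the $\ls$ side (Theorem~\ref{ch5:simlmtsls}(2)). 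Thus an infinite $\lmts$-reduction made up mostly of $cl$-steps would not translate to an infinite $\ls$-reduction, and your contradiction breaks down.

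Two repairs are available. The cleaner one for your stated goal is to drop the detour through $\lmts$: a $\lmt$-term under $\lmt$-reduction never fires a $cl$-rule, so Theorem~\ref{ch5:simlmtsls}(1) already gives strict preservation for every step and your argument closes immediately. The second, which is what the paper actually does to obtain SN of $\lmts$, is to keep the detour but handle the $cl$-rules separately: they strictly decrease term complexity, so any infinite $\lmts$-reduction must contain infinitely many non-$cl$ steps; postponing the $\sim$-relations via Lemma~\ref{ch4:ppsim} then yields an infinite $\ls$-reduction from $u_0^{\mathfrak e}$, contradicting Theorem~\ref{SN1}.
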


The proof of Theorem \ref{SN2} can be found in the thesis of Polonovski \cite{Poi},
as well as in the work of David and Nour \cite{Dav-Nou4}, where an
arithmetical proof is presented.

\subsection{The $\lmts$-calculus}

Since we work in a sequent calculus, where negation is implicitly built in the rules,
the typing rules of the $\lmt$-calculus do not handle negation.
However, for a full treatment of propositional logic we
found it more convenient to introduce rules concerning negation. Since $c$-terms, which could
have been candidates for objects of type $\bot$, are distinctly
separated from terms, adding new term- and type-forming operators seems to be the easiest way to
define negation.

\begin{defi}\label{ch4:defterms}\hfill
\begin{enumerate}

\item The terms of the $\lmts$-calculus
are defined by the following grammar.
\[
\begin{array}{ccccccccc}
p   &::= & \lfl t , e \rfl & \, &\, &\, &\, &\, &\, \\
t &::= & x &\mid & \l x \, t &\mid &\mu \a \, p &\mid &\overline{e} \\
e &::= &\a &\mid & (t.e)  &\mid & \tilde{\mu}x \, p &\mid
&\widetilde{t}
\end{array}
\]
As an abuse of terminology, in the sequel when speaking about the
syntactic elements of the $\lmts$-calculus, we may not distinguish
$l$-, $r$- and $c$-terms, we may speak about terms in general.
We denote by $\mathfrak{T}$ the set of terms of the $\lmts$-calculus.
\newpage

\item The complexity of a term of $\mathfrak{T}$ is defined as follows.
\begin{itemize}
\item[] $cxty(x)=cxty(\a)=0$,
\item[] $cxty(\l x t)=   cxty(\widetilde{t}) =  cxty(t)+1$,
\item[] $cxty(\overline{e}) = cxty(e) +1$,
\item[] $cxty(\mu \a p)=   cxty(\tilde{\mu}x \, p) =  cxty(p)+1$,
\item[] $cxty(\lfl t , e \rfl )=   cxty((t.e) ) =  cxty(t)+cxty(e)$.
\end{itemize}
\end{enumerate}
\end{defi}

\begin{defi}
The type inference rules are the same as in the $\lmt$-calculus with two
extra rules added for the types of the complemented terms.
Moreover, we introduce an equation between types (for all types $A$, $(A^\bot)^\bot = A$)
to ensure that our negation is involutive.
\[
\begin{array}{ll}
\begin{minipage}[t]{180pt}
$\;\;\;\;\;\;\;\;\;\;\;\;\;\;\;\;\;\;\;\;\;\;\overline{.}\;\;\;\F{\G\; |\; e : A \vv \D}{\G \vv \overline{e}:A^\bot\; |\; \D }$\\
\end{minipage} &
\begin{minipage}[t]{180pt}
$\widetilde{.}\;\;\; \F{\G \vv t : A\; |\;  \D }{\G  \; |\; \widetilde{t}:A^\bot\vv \D }$\\
\end{minipage}
\end{array}
\]
We also define the complexity of types in the $\lmts$-calculus.

\begin{enumerate}

\item[] $cxty(A)=0$ for atomic types,
\item[] $cxty(A\ra B)=cxty(A)+cxty(B)+1$,
\item[] $cxty(A^{\bot})=cxty(A)$.
\end{enumerate}
That is, the complexity of a type $A$ provides us with the number of arrows in $A$. The presence of negation makes it
necessary for us to introduce new rules handling negation.
\end{defi}

\begin{defi}
Besides the reduction rules already present in $\lmt$, we endow the
calculus with some more new rules to handle the larger set of
terms. In what follows $cl$ stands for the name: complementer
rule. We shall refer to the $cl_{1,l}$- and $cl_{1,r}$-rules by a 
common notation as the $cl_{1}$-rules.

\begin{tabular}{ l l l l l}
$(cl_{1,l})$ & $\overline{\widetilde{t}}$ & $\raa_{\; cl_{1,l}}$ & $t$ \\
$(cl_{1,r})$ & $\widetilde{\overline{e}}$ & $\raa_{\; cl_{1,r}}$ & $e$ & $\;$ \\
$(cl_{2})$ & $\lfl \overline{e} , \widetilde{t}\rfl$ & $\raa_{\; cl_{2}}$ & $\lfl t , e\rfl$ & \\
\end{tabular}\medskip

\noindent In the sequel, we continue to apply the notation $\raa$ and $\raa^*$ in relation with this new calculus.
\end{defi}

Obviously, the statements analogous to Propositions \ref{ch4:pres} and \ref{ch4:subform} are still valid.

\section{Relating the \texorpdfstring{$\ls$}{lambda-Sum-Prop}-calculus to the \texorpdfstring{$\lmts$}{blambda-mu-bmu}-calculus}\label{section4}

Rocheteau \cite{Roc} defined a translation between the $\lmt$-calculus and the $\lm$-calculus, treating both a call-by-value
and a call-by-name aspect of $\lmt$. In this subsection, we give a translation (in both directions) between the \smash{$\ls$}-calculus
and the $\lmts$-calculus, which is a version of the $\lmt$-calculus extended with negation. The translations are such that strong
normalization of one calculus follows from that of the other in both directions. We omit issues of evaluation strategies, however.
In the end of the section we give an exact description of the correspondence between the two translations. Preparatory to presenting
the translations, let us introduce some definitions and notation below. We assume that the two calculi have the same sets of variables
and atomic types. Moreover, as an abuse of notation, if $\overline{{\a}}:A^{\bot}$ stems from the $r$-variable $\a:A$ in the
$\lmts$-calculus, then we suppose that in the \smash{$\ls$}-calculus $\overline{{\a}}$ denotes a variable with type $A^{\bot}$.

\subsection{A translation of the
  \texorpdfstring{$\lmts$}{blambda-my-bmu}-calculus into the
  \texorpdfstring{$\ls$}{lambda-Sym-Prop}-calculus}

\begin{defi}\label{ch5:pi}\hfill
\begin{enumerate}

\item Let us consider the $\ls$-calculus. For $i\in\{ 1 , 2\}$, we write ${\pi}_i(y)=\l z(y\star {\si}_i(z))$.
Then, we can observe that $y : A_1 \wedge A_2 \v {\pi}_i(y):A_i$, for $i\in\{ 1 , 2\}$.

\item We define a translation $.^\mathfrak{e}:\mathfrak{T} \longrightarrow \ma{T}$ as follows.
\[ p^\mathfrak{e}=(u^\mathfrak{e}\star v^\mathfrak{e}) \;\;\;
\textrm{ if $\;\;p=\lfl v , u\rfl$}.
\]
\[ t^\mathfrak{e}=\left\{ \begin{array}{ll} x & \;\;\;\textrm{ if $\;\;t=x$},\\
\l y(\l x({\pi}_2(y)\star u^\mathfrak{e})\star {\pi}_1(y)) & \;\;\;\textrm{ if $\;\;t=\l xu$},
\\\l x(e^\mathfrak{e}\star t^\mathfrak{e})& \;\;\;\textrm{ if $\;\;t=\mut
x\lfloor t , e\rfloor$},\\u^\mathfrak{e}&\;\;\;\textrm{ if
$\;\;t=\overline{u}$}.\end{array}\right.
\]
\[ e^\mathfrak{e}=\left\{ \begin{array}{ll} \overline{\a} & \;\;\;\textrm{ if $\;\;e=\a$},\\
\lan t^\mathfrak{e} , h^\mathfrak{e}\ran & \;\;\;\textrm{ if $\;\;e=t.h$},
\\  \l \overline{\a}(e^\mathfrak{e}\star t^\mathfrak{e})& \;\;\;\textrm{ if $\;\;e=\mu \a\lfl t , e\rfl$},
\\h^\mathfrak{e}&\;\;\;\textrm{ if
$\;\;e=\widetilde{h}$}.
\end{array}\right. \]
\item The translation $.^\mathfrak{e}$ also applies to types.
\begin{itemize}
\item $A^\mathfrak{e}=A$, where $A$ is an atomic type,
\item $(A^\bot)^\mathfrak{e}=(A^\mathfrak{e})^\bot$,
\item $(A\ra B)^\mathfrak{e}=(A^\mathfrak{e})^\bot\vee B^\mathfrak{e}$.
\end{itemize}
\item Let $\G$, $\D$ be $l$- and $r$-contexts, respectively. Then
$\G^\mathfrak{e}=\{x:A^\mathfrak{e}\; | \; x:A\in
\G\}$ and similarly for $\D$. Furthermore, for any $r$-context $\D$, let $\D^{\bot}=\{\overline{\a}: A^\bot \;|\; \a:A\in \D\}$.
\end{enumerate}
\end{defi}

\begin{lem} \label{ch5:typelmtsls}
\begin{enumerate}

\item If $\G \;{\vv} \; t : A\; |\;  \D$, then $\G^\mathfrak{e} ,
(\D^\mathfrak{e})^{\bot}\;{\v} \; t^\mathfrak{e}:A^\mathfrak{e}$.
\item If $\G \; |\; e : A\;{\vv} \;  \D$, then $\G^\mathfrak{e} ,
(\D^\mathfrak{e})^{\bot}\;{\v} \; e^\mathfrak{e}: (A^\mathfrak{e})^\bot$.
\item If $p:(\G\;{\vv} \;\D )$, then $\G^\mathfrak{e},
(\D^\mathfrak{e})^{\bot}\;{\v} \;p^\mathfrak{e}:\bot$.
\end{enumerate}
\end{lem}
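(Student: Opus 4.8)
The plan is to prove the three assertions (1)--(3) simultaneously by induction on the structure of the $\lmts$-terms $t$, $e$ and $p$; equivalently, by induction on the height of the typing derivation, since in a simply typed sequent calculus each syntactic constructor is introduced by exactly one typing rule. At each step I would read off the rule that produced the judgement, feed its premises to the appropriate induction hypotheses, and reassemble an $\ls$-derivation of the translated term from the $\ls$-rules $var$, $\lan\,,\ran$, $\si_i$, $\l$ and $\star$. The computation that keeps the types aligned is that the translation commutes with negation, $(A^\bot)^\mathfrak{e}=(A^\mathfrak{e})^\bot$, which together with the involutivity of $\ls$-negation and the de Morgan laws gives $((A\ra B)^\mathfrak{e})^\bot = A^\mathfrak{e}\wedge(B^\mathfrak{e})^\bot$. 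I would also note, once and for all, that every type of the shape $A^\mathfrak{e}$ is an m-type (it is never $\bot$), so the side conditions of the $var$- and $\l$-rules of $\ls$ are satisfied.

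I expect most cases to be mechanical. The variable cases $t=x$ and $e=\a$ are the axiom $var$ applied to $x$ (resp. to the variable $\overline{\a}$ supplied by $(\D^\mathfrak{e})^\bot$). For $p=\lfl v,u\rfl$ the rule $\lfl\,,\rfl$ gives $\G\vv v:A\mid\D$ and $\G\mid u:A\vv\D$, so hypotheses (1) and (2) yield $v^\mathfrak{e}:A^\mathfrak{e}$ and $u^\mathfrak{e}:(A^\mathfrak{e})^\bot$, and the $\star$-rule makes $p^\mathfrak{e}=(u^\mathfrak{e}\star v^\mathfrak{e})$ of type $\bot$, which is (3). The complement cases $t=\overline{u}$ and $e=\widetilde{h}$ follow at once from the hypotheses and $(A^\bot)^\mathfrak{e}=(A^\mathfrak{e})^\bot$ (using involutivity for $\widetilde{h}$). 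For $e=(t.h)$ one computes $((A\ra B)^\mathfrak{e})^\bot=A^\mathfrak{e}\wedge(B^\mathfrak{e})^\bot$ and applies the pairing rule to $t^\mathfrak{e}:A^\mathfrak{e}$ and $h^\mathfrak{e}:(B^\mathfrak{e})^\bot$. In the two clauses built on an inner $c$-term the translate is a single abstraction $\l\overline{\a}(e^\mathfrak{e}\star t^\mathfrak{e})$ (resp. $\l x(e^\mathfrak{e}\star t^\mathfrak{e})$): the $\star$-rule turns the body into $\bot$ as before, and abstracting over $\overline{\a}:(A^\mathfrak{e})^\bot$ (resp. $x:A^\mathfrak{e}$) produces type $A^\mathfrak{e}$ (resp. $(A^\mathfrak{e})^\bot$) by involutivity, as required by (1) (resp. (2)).

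The hard part will be the case $t=\l xu$, whose translate $\l y(\l x(\pi_2(y)\star u^\mathfrak{e})\star\pi_1(y))$ is the only one that is not a direct transcription of its subterms. The key is to assign $y$ the type $A^\mathfrak{e}\wedge(B^\mathfrak{e})^\bot$, which is exactly $((A\ra B)^\mathfrak{e})^\bot$. With this choice the observation in Definition \ref{ch5:pi}(1) gives $\pi_1(y):A^\mathfrak{e}$ and $\pi_2(y):(B^\mathfrak{e})^\bot$; hypothesis (1) applied to $u$ (in the context extended by $x:A^\mathfrak{e}$) gives $u^\mathfrak{e}:B^\mathfrak{e}$, so $(\pi_2(y)\star u^\mathfrak{e}):\bot$ and hence $\l x(\pi_2(y)\star u^\mathfrak{e}):(A^\mathfrak{e})^\bot$; a further $\star$ with $\pi_1(y):A^\mathfrak{e}$ returns $\bot$, and abstracting over $y$ yields $(A^\mathfrak{e}\wedge(B^\mathfrak{e})^\bot)^\bot=(A\ra B)^\mathfrak{e}$. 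Checking that the $\pi_i$ behave as claimed and that the nested negations collapse correctly is the only step demanding care; once it is dispatched the induction closes.
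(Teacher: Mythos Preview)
Your proposal is correct and follows essentially the same route as the paper: a simultaneous induction on the height of the typing derivation, with the only substantive case being $t=\l x u$, which you handle exactly as the paper does by giving $y$ the type $A^\mathfrak{e}\wedge(B^\mathfrak{e})^\bot$ and invoking the typing of $\pi_1(y)$, $\pi_2(y)$. Your write-up is in fact more thorough than the paper's, which only exhibits the cases $\l x u$, $(t.e)$ and $\lfl t,e\rfl$ and leaves the rest implicit.
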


\begin{proof}
The above statements are proved simultaneously according to the
length of the $\lmts$-deduction. We remark that $.^{\mathfrak{e}}$ is defined in Definition \ref{ch5:pi} exactly
in the way to make the assertions of the lemma true.
Let us examine some of the more interesting cases.
\begin{enumerate}

\item Suppose
\[
\frac{\G , x:A\;{\vv}\;u:B\;|\;\D}{\G \;{\vv}\;\l xu:A\ra B\;|\;\D}.
\]
Then we have, by the induction hypothesis and Notation \ref{ch5:pi},
\[\G^\mathfrak{e} , (\D^\mathfrak{e})^\bot , x:A^\mathfrak{e} ,
y:A^\mathfrak{e}\wedge (B^\mathfrak{e})^\bot
\;{\v}\;u^\mathfrak{e}:B^\mathfrak{e},\]
\[\G^\mathfrak{e} , (\D^\mathfrak{e})^\bot ,
y:A^\mathfrak{e}\wedge (B^\mathfrak{e})^\bot
\;{\v}\;{\pi}_1(y):A^\mathfrak{e},\]
\[\G^\mathfrak{e} , (\D^\mathfrak{e})^\bot ,
y:A^\mathfrak{e}\wedge (B^\mathfrak{e})^\bot
\;{\v}\;{\pi}_2(y):(B^\mathfrak{e})^\bot.\]
Thus we can conclude
\[\G^\mathfrak{e} , (\D^\mathfrak{e})^\bot , x:A^\mathfrak{e} ,
y:A^\mathfrak{e}\wedge (B^\mathfrak{e})^\bot
\;{\v}\;({\pi}_2(y)\star u^\mathfrak{e}):\bot,
\]
\[\G^\mathfrak{e} , (\D^\mathfrak{e})^\bot ,
y:A^\mathfrak{e}\wedge (B^\mathfrak{e})^\bot\;{\v}\;\l
x({\pi}_2(y)\star u^\mathfrak{e}): (A^\mathfrak{e})^\bot.
\]
From which, we obtain
\[\G^\mathfrak{e} , (\D^\mathfrak{e})^\bot\;{\v}\;
\l y(\l x({\pi}_2(y)\star u^\mathfrak{e})\star {\pi}_1(y)):
(A^\mathfrak{e})^\bot\vee B^\mathfrak{e}.
\]
\item Assume now
\[\frac{\G\;{\vv}\;t:A\;|\;\D \;\;\;\;\; \G\;|\;e:B\;{\vv}\;\D}
{\G\;|\;t.e:A\ra B\;{\vv}\;\D}.
\]
Then we have
\[
\frac{\G^\mathfrak{e} , (\D^\mathfrak{e})^\bot\;{\v}\;t^\mathfrak{e}:A^\mathfrak{e} \;\;\;\;\;
\G^\mathfrak{e} , (\D^\mathfrak{e})^\bot\;{\v}\;e^\mathfrak{e}:(B^\mathfrak{e})^\bot}
{\G^\mathfrak{e} , (\D^\mathfrak{e})^\bot\;{\v}\;
\lan t^\mathfrak{e} , e^\mathfrak{e}\ran :A^\mathfrak{e}\wedge (B^\mathfrak{e})^\bot}.
\]
\item From
\[
\frac{\G\;{\vv}\;t:A\;|\;\D   \;\;\;\;\;  \G\;|\;e:A\;{\vv}\;\D}
{\lfl t , e\rfl :(\G\;{\vv}\;\D)},
\]
we obtain
\[
\frac{\G^\mathfrak{e} , (\D^\mathfrak{e})^\bot\;{\v}\;t^\mathfrak{e}:A^\mathfrak{e} \;\;\;\;\;
\G^\mathfrak{e} , (\D^\mathfrak{e})^\bot\;{\v}\;e^\mathfrak{e}:(A^\mathfrak{e})^\bot}
{\G^\mathfrak{e} , (\D^\mathfrak{e})^\bot\;{\v}\;
(e^\mathfrak{e}\star t^\mathfrak{e}):\bot}.
\]
\end{enumerate}
\end{proof}

\noindent Our next aim is to prove that $\lmts$ can be simulated by the
\smash{$\ls$}-calculus. To this end we introduce a new notion of equality
in the $\ls$-calculus.

\begin{defi}
We define an equivalence relation $\sim$ on $\ma{T}$, which is the smallest relation compatible with the term forming rules and containing $((M\star N),(N\star M))$.
\begin{itemize}
\item $x\sim x$,
\item if $M\sim M'$, then $\l xM\sim \l xM'$ and $\si_i(M)\sim \si_i(M')$ for $i\in \{1, 2\}$,
\item if $M\sim M'$ and $N\sim N'$, then $\lan M , N\ran \sim \lan M' , N'\ran$ and $(M \star N)\sim (M'\star N')$
and $(M \star N)\sim (N'\star M')$.
\end{itemize}
We say that $M$ and $N$ are equal up to symmetry provided $M \sim N$.
\end{defi}

\begin{lem} \label{ch4:sbstsim} \label{ch4:ppsim}
Let $M,M',N,N' \in \ma{T}$.
\begin{enumerate}

\item If $M\sim M'$ and $N\sim N'$, then $M[x:=N]\sim M'[x:=N']$.
\item If $M\sim M'$ and $M'\ra N$, then there is $N'$ for which $M\ra N'$ and $N\sim N'$.
\end{enumerate}
\end{lem}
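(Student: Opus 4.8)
The plan is to prove both parts by structural induction, reading the equivalence relation $\sim$ as a congruence that, at each application node $(M\star N)$, additionally allows the two immediate subterms to be swapped. For part (1), I would induct on the structure of $M$ (equivalently on $M'$, since $M\sim M'$ forces the two to have the same outer constructor, except possibly a commuted application). The base case $M=x$ splits according to whether $x$ is the substituted variable: if $M=x=M'$ is the variable in question, then $M[x:=N]=N\sim N'=M'[x:=N']$ directly by hypothesis; if $M$ is any other variable, then $M[x:=N]=M=M'=M'[x:=N']$. The constructor cases $\l y$, $\si_i$, $\lan\ ,\ \ran$ follow by applying the induction hypothesis componentwise and then using the matching clause in the definition of $\sim$; I would tacitly assume the usual variable-convention so that substitution commutes with going under a $\l y$ binder. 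The only case needing care is the application: from $M\sim M'$ with $M=(M_1\star M_2)$ I must handle both $M'=(M_1'\star M_2')$ (with $M_i\sim M_i'$) and the swapped shape $M'=(M_2'\star M_1')$ (with $M_1\sim M_1'$, $M_2\sim M_2'$). In either case the induction hypothesis gives $M_i[x:=N]\sim M_i'[x:=N']$, and the third clause of the definition of $\sim$ supplies both $(M_1\star M_2)[x:=N]\sim(M_1'\star M_2')[x:=N']$ and $(M_1\star M_2)[x:=N]\sim(M_2'\star M_1')[x:=N']$, closing the case.

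For part (2), I would induct on the derivation of the reduction $M'\ra N$, i.e. on the position of the contracted redex inside $M'$, and simultaneously track the matching position in $M$. The key point is that $\sim$ only ever reshuffles the order of the two arguments of an application, so a redex in $M'$ corresponds to a redex in $M$ of the \emph{same} rule (possibly written with its two star-arguments transposed), and $\sim$ is stable under such transposition. The compatible-closure cases (the redex lies strictly inside a subterm) reduce immediately to the induction hypothesis together with the congruence clauses of $\sim$, again using that a swapped application still lets me push the reduction into the correct subterm. The head cases are where the redex is at the root of $M'$.

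The genuinely delicate step is the head case, and this is where I expect the main obstacle. When $M'$ is itself a redex, the corresponding subterm of $M$ is equal up to symmetry, so its two star-arguments may have been exchanged; I must check that after contracting the analogous redex in $M$ the results remain $\sim$-related, and that the \emph{correct} reduction rule is available in $M$. Concretely, a $\b$-redex $(\l yP\star Q)$ in $M'$ may appear in $M$ as $(Q_0\star\l yP_0)$ with $P_0\sim P$, $Q_0\sim Q$; here I use that the calculus has the symmetric rule $\b^{\bot}$, so $(Q_0\star\l yP_0)\ra_{\b^{\bot}}P_0[y:=Q_0]$, and part (1) yields $P_0[y:=Q_0]\sim P[y:=Q]=N$. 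The $\pi$/$\pi^{\bot}$ pair behaves identically, the symmetric rule absorbing the argument swap, and one must also verify the bookkeeping that the $\si_i$ on the two sides carry the same index $i$ (they do, since $\si_i(M)\sim\si_i(M')$ only relates equal indices). Thus the closure of the rule set under the $\beta/\beta^{\bot}$ and $\pi/\pi^{\bot}$ symmetry is exactly what makes the head case go through, and once that matching of rules is in place the remaining verifications are routine applications of part (1) and the congruence clauses of $\sim$.
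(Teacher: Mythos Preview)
Your approach is correct and matches the paper's (extremely terse) proof, which simply says ``By induction on $cxty(M)$'' for part (1) and ``By 1'' for part (2); you have unpacked exactly the intended argument, including the crucial observation that a swapped $\b$-redex in $M$ is still a $\b^{\bot}$-redex (and likewise for $\pi/\pi^{\bot}$), with part (1) closing the substitution step. One small addition: the full $\ls$-reduction $\ra$ also contains the head rules $\eta$, $\eta^{\bot}$ and $Triv$, which you did not list explicitly; the pair $\eta/\eta^{\bot}$ is handled by precisely the same symmetry argument (using that $\sim$ preserves free variables, so the side condition $x\notin Fv(P)$ transfers), and for $Triv$ one observes that $M\sim E[P]$ forces $M=E'[P_0]$ with $P_0\sim P$ and $E'[-]$ a context satisfying the same side-conditions, so $M\ra_{Triv}P_0\sim P$.
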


\begin{proof}
1. By induction on $cxty(M)$. 2. By 1.
\end{proof}

\begin{lem} \label{ch5:subls}
Let $u,t,e \in \mathfrak{T}$. Then $(u[x:=t])^\mathfrak{e}=u^\mathfrak{e}[x:=t^\mathfrak{e}]$ and
$(u[a:=e])^\mathfrak{e}=u^\mathfrak{e}[a:=e^\mathfrak{e}]$.
\end{lem}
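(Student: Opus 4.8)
The plan is to prove both identities simultaneously by structural induction on the term $u$, exploiting the mutual recursion between the three syntactic classes ($c$-terms $p$, $l$-terms $t$, $r$-terms $e$). Since the translation $.^{\mathfrak e}$ is defined by mutual recursion over these classes, and substitution descends compatibly through all term-forming operators, a single induction on $cxty(u)$ covering the three cases in parallel is the natural frame. I would state an induction hypothesis that asserts both $(u[x:=t])^{\mathfrak e}=u^{\mathfrak e}[x:=t^{\mathfrak e}]$ and $(u[a:=e])^{\mathfrak e}=u^{\mathfrak e}[a:=e^{\mathfrak e}]$ for all strict subterms, and then check that it propagates to $u$ itself in each case of the grammar.

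The base cases are the variables. For $u=x$ we have $x^{\mathfrak e}=x$, so $(x[x:=t])^{\mathfrak e}=t^{\mathfrak e}=x^{\mathfrak e}[x:=t^{\mathfrak e}]$, and the $r$-substitution leaves $x$ untouched on both sides; for $u=\a$ we have $\a^{\mathfrak e}=\overline{\a}$, and here I must use the convention fixed just before Definition \ref{ch5:pi}, namely that $\overline{\a}$ is the $\ls$-variable assigned to $\a$, so that substituting $e$ for $a$ on the right acts on $\overline{\a}$ exactly as $[a:=e^{\mathfrak e}]$ prescribes. For the compound cases I would simply push the substitution through each clause of the translation. The genuinely interesting clauses are $t=\l xu$, which translates to $\l y(\l x({\pi}_2(y)\star u^{\mathfrak e})\star {\pi}_1(y))$, and the $\mu$/$\tilde\mu$ clauses, which introduce the fresh bound variables $y$, $\overline{\a}$ and the abstractions $\l\overline{\a}$, $\l x$.

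The main obstacle, and the only point demanding care, is variable capture and freshness of the auxiliary variables $y$ and $\overline{\a}$ introduced by the translation of $\l xu$ and of the $\mu$/$\tilde\mu$ binders. To push the substitution inside, I would invoke the usual Barendregt convention (or explicit $\a$-renaming) to guarantee that $y$ and $\overline{\a}$ neither occur free in $t^{\mathfrak e}$ (resp. $e^{\mathfrak e}$) nor are captured, so that, for instance, $(\l xu[x:=t])^{\mathfrak e}$ and $(\l xu)^{\mathfrak e}[x:=t^{\mathfrak e}]$ agree once $[x:=\cdots]$ has been commuted past $\l y$, $\l x$ and the $\pi_i(y)$ subterms. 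Since $\pi_i(y)=\l z(y\star\sigma_i(z))$ contains no free occurrence of the variable being substituted, these subterms are inert under the substitution and match on both sides immediately. In each remaining case the result then follows by applying the induction hypothesis to the strictly smaller components ($u$, $t$, $e$, $h$) and reassembling via the definition of $.^{\mathfrak e}$; I would record the $\overline{u}$ and $\widetilde{h}$ clauses explicitly, as there the translation simply forgets the complementation and the equation reduces directly to the inductive hypothesis on the inner term.
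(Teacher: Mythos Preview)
Your proposal is correct and follows exactly the paper's approach: the paper's proof is the single line ``By induction on $cxty(u)$,'' and your write-up is simply a careful unfolding of that induction through the clauses of Definition~\ref{ch5:pi}. The attention you pay to freshness of the auxiliary bound variables $y$ and $\overline{\a}$ (via the Barendregt convention) is the right level of care for a detailed proof, even though the paper leaves it implicit.
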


\begin{proof}
By induction on $cxty(u)$.
\end{proof}

Now we can formulate our assertion about the simulation of the $\lmts$-calculus by the $\ls$-calculus.

\begin{thm} \label{ch5:simlmtsls}~
Let $v,w \in \mathfrak{T}$.
\begin{enumerate}

\item If $v\raa_r w$ and $r \in
\{ \b \, , \, \mu \, , \, {\mut} \, , \, {s_l} \, , \, {s_r} \}$, then $\;v^\mathfrak{e}\ra^+w^\mathfrak{e}$.
\item If $v\raa_r w$ and $r\in\{{cl_{1,l}} \, , \, {cl_{1,r}} \, , \,  {cl_2} \}$, then  $v^\mathfrak{e}\sim w^\mathfrak{e}$.
\end{enumerate}
\end{thm}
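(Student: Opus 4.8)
The plan is to prove both assertions first for the case where the contracted redex sits at the head of $v$, and then to lift the result to an arbitrary occurrence. The lifting is immediate once the head cases are settled: the translation $.^\mathfrak{e}$ is compositional, in the sense that each immediate subterm of a $\lmts$-term occurs (exactly once) inside the translation of that term, so a one-step reduction at a deep position of $v$ corresponds to rewriting the translation of the contracted subterm inside a fixed $\ls$-context $C$. For (1) this gives $v^\mathfrak{e}=C[s^\mathfrak{e}]\ra^+C[(s')^\mathfrak{e}]=w^\mathfrak{e}$ by compatible closure of $\ra$, and for (2) it gives $v^\mathfrak{e}\sim w^\mathfrak{e}$ because $\sim$ is a congruence. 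The only tool needed beyond the definitions is Lemma \ref{ch5:subls}, which identifies the translation of a substituted term with the substituted translation.

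For the structural rules of (1) the head cases are single-step reductions. For $(\mut)$, $\lfl t,\mut x p\rfl^\mathfrak{e}=(\l x\,p^\mathfrak{e}\star t^\mathfrak{e})$ is a $\b$-redex contracting to $p^\mathfrak{e}[x:=t^\mathfrak{e}]$, which equals $(p[x:=t])^\mathfrak{e}$ by Lemma \ref{ch5:subls}; dually, for $(\mu)$, $\lfl \mu\a p,e\rfl^\mathfrak{e}=(e^\mathfrak{e}\star\l\overline{\a}\,p^\mathfrak{e})$ is a $\bb$-redex contracting to $p^\mathfrak{e}[\overline{\a}:=e^\mathfrak{e}]=(p[\a:=e])^\mathfrak{e}$. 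For $(s_r)$, $(\mut x\lfl x,e\rfl)^\mathfrak{e}=\l x(e^\mathfrak{e}\star x)$ is an $\eta$-redex (as $x\notin Fv(e^\mathfrak{e})$) contracting to $e^\mathfrak{e}$; for $(s_l)$, $(\mu\a\lfl t,\a\rfl)^\mathfrak{e}=\l\overline{\a}(\overline{\a}\star t^\mathfrak{e})$ is an $\eta^{\bot}$-redex contracting to $t^\mathfrak{e}$. For assertion (2) the head cases are even simpler: $cl_{1,l}$ and $cl_{1,r}$ give literal equalities, since $(\overline{\widetilde{t}})^\mathfrak{e}=t^\mathfrak{e}$ and $(\widetilde{\overline{e}})^\mathfrak{e}=e^\mathfrak{e}$; while for $cl_2$ we compute $\lfl\overline{e},\widetilde{t}\rfl^\mathfrak{e}=(t^\mathfrak{e}\star e^\mathfrak{e})$ and $\lfl t,e\rfl^\mathfrak{e}=(e^\mathfrak{e}\star t^\mathfrak{e})$, which are related precisely by the clause $(M\star N)\sim(N\star M)$. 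This last case is exactly the reason the relation $\sim$ was introduced.

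The main obstacle is the $(\l)$-rule, whose head case requires a genuine chain of reductions and makes essential use of the projection abbreviations $\pi_i(y)=\l z(y\star\si_i(z))$ built into the definition of $(\l x t)^\mathfrak{e}$. Writing $v=\lfl\l x t,(t'.e)\rfl$, we have $v^\mathfrak{e}=(\lan t'^\mathfrak{e},e^\mathfrak{e}\ran\star\l y(\l x(\pi_2(y)\star t^\mathfrak{e})\star\pi_1(y)))$. Contracting the outer $\bb$-redex substitutes the pair $\lan t'^\mathfrak{e},e^\mathfrak{e}\ran$ for $y$, turning each $\pi_i(y)$ into $\l z(\lan t'^\mathfrak{e},e^\mathfrak{e}\ran\star\si_i(z))$; a $\pi$-step then yields $\l z(t'^\mathfrak{e}\star z)$ and $\l z(e^\mathfrak{e}\star z)$, and an $\eta$-step collapses these to $t'^\mathfrak{e}$ and $e^\mathfrak{e}$ respectively. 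Assembling the pieces leaves $(\l x(e^\mathfrak{e}\star t^\mathfrak{e})\star t'^\mathfrak{e})$, which is exactly $w^\mathfrak{e}$ for $w=\lfl t',\mut x\lfl t,e\rfl\rfl$. Thus $v^\mathfrak{e}\ra^+w^\mathfrak{e}$ through one $\bb$-, two $\pi$- and two $\eta$-steps. The only care needed is bookkeeping: verifying the freshness condition $z\notin Fv(t'^\mathfrak{e})$ for the $\eta$-steps and confirming that the two projections reduce independently inside their surrounding context. Once this is in place, the head cases combine with the compositionality argument of the first paragraph to conclude the theorem.
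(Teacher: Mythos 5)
Your proposal is correct and follows essentially the same route as the paper: a case analysis on the contracted rule, computing the translations directly, invoking Lemma \ref{ch5:subls} for the $\mu$/$\mut$ cases, the $\b^{\bot}$-then-$\pi$-then-$\eta$ chain for the $(\l)$-rule, and the symmetry clause of $\sim$ for $cl_2$. You are in fact somewhat more thorough than the paper, which only exhibits the ``typical'' head cases and leaves both the remaining rules and the lifting to arbitrary redex positions implicit, whereas you spell out the compositionality/congruence argument explicitly.
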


\begin{proof}
\begin{enumerate}

\item Let us only treat the typical cases.
\begin{enumerate}

\item If $v=\lfl \l xu , (t.e)\rfl \raa_{\b} \lfl t , \mut x\lfl u ,
e\rfl \rfl =w$, then
$v^\mathfrak{e} = (\lan t^\mathfrak{e} , e^\mathfrak{e}\ran \star
\l y(\l x({\pi}_2(y)\star
u^\mathfrak{e})\star {\pi}_1(y)))\ra_{\b^{\bot}}
{} (\l x({\pi}_2(\lan t^\mathfrak{e} , e^\mathfrak{e}\ran )\star
u^\mathfrak{e})\star
{\pi}_1(\lan t^\mathfrak{e} , e^\mathfrak{e}\ran ))\ra^*
{} (\l x(e^\mathfrak{e}\star u^\mathfrak{e})\star
t^\mathfrak{e})=w^\mathfrak{e}$.

\item If $v=\lfl \mu ap , e\rfl \raa_{\m} p[a:=e]=w$, then, by Lemma
\ref{ch5:subls},
$v^\mathfrak{e}=(e^\mathfrak{e}\star \l ap^\mathfrak{e})
\ra_{\b^{\bot}}p^\mathfrak{e}[a=e^\mathfrak{e}]=w^\mathfrak{e}$.
\item If $v=\mu a\lfl w , a\rfl \raa_{s_l} w$, $a\notin w$, then
$v^\mathfrak{e}=\l a (a\star w^\mathfrak{e})\ra_{\eta^\bot}w^\mathfrak{e}$.
\end{enumerate}
\item
\begin{enumerate}

\item If $v=\overline{\widetilde{u}}\raa_{cl_{1,l}}u=w$, then $v^\mathfrak{e}=(\overline{\widetilde{u}})^\mathfrak{e}=
u^\mathfrak{e}=w^\mathfrak{e}$.
\item If $v=\lfl \overline{v} , \widetilde{u}\rfl \raa_{cl_2} \lfl u , v\rfl =w$, then
$v^\mathfrak{e}=\lfl \overline{v} , \widetilde{u}\rfl^\mathfrak{e}
=(u^{\mf{e}}\star v^{\mf{e}})\sim w^\mathfrak{e}$.\qedhere
\end{enumerate}
\end{enumerate}
\end{proof}

\begin{cor}
The $\lmts$-calculus is strongly normalizable.
\end{cor}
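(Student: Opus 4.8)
The plan is to derive strong normalization of the $\lmts$-calculus from that of the $\ls$-calculus, which is now available: by Theorem~\ref{ch3:snt} the $\l_{\b\pi}$-calculus is strongly normalizing, whence by Corollary~\ref{ch3:snlsfull} every typable $\ls$-term is strongly normalizable (Theorem~\ref{SN1}). So I would argue by contradiction: suppose some typable $\lmts$-term $v_0$ admits an infinite reduction $v_0 \raa v_1 \raa v_2 \raa \cdots$. By Lemma~\ref{ch5:typelmtsls} the translate $v_0^\mathfrak{e}$ is a typable $\ls$-term, hence strongly normalizable; the goal is to manufacture an infinite $\ls$-reduction issuing from $v_0^\mathfrak{e}$ and reach a contradiction.

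First I would split the steps of the sequence into two groups according to Theorem~\ref{ch5:simlmtsls}: call a step \emph{essential} if its rule lies in $\{\b,\mu,\mut,s_l,s_r\}$, and \emph{complementary} if its rule lies in $\{cl_{1,l},cl_{1,r},cl_2\}$. The obstacle is that complementary steps are invisible to the translation: Theorem~\ref{ch5:simlmtsls}(2) only yields $v_i^\mathfrak{e}\sim v_{i+1}^\mathfrak{e}$, producing no $\ls$-reduction at all, so a tail made up solely of complementary steps would give no $\ls$-reduction. To exclude this I would note that each $cl$-rule strictly decreases the complexity $cxty$ of Definition~\ref{ch4:defterms}: indeed $cxty(\overline{\widetilde{t}})=cxty(t)+2$, $cxty(\widetilde{\overline{e}})=cxty(e)+2$, and $cxty(\lfl \overline{e},\widetilde{t}\rfl)=cxty(t)+cxty(e)+2$, each exceeding the complexity of the corresponding contractum; since $cxty$ is additive over subterms, the strict decrease persists under the compatible closure. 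Hence the $cl$-reduction is strongly normalizing on its own, so the infinite sequence must contain infinitely many essential steps.

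Next I would build inductively a chain of $\ls$-terms $T_0,T_1,T_2,\dots$ with $T_0=v_0^\mathfrak{e}$ maintaining the invariant $T_i\sim v_i^\mathfrak{e}$. At a complementary step, $v_i^\mathfrak{e}\sim v_{i+1}^\mathfrak{e}$ by Theorem~\ref{ch5:simlmtsls}(2), so transitivity of $\sim$ lets me simply put $T_{i+1}=T_i$. At an essential step, $v_i^\mathfrak{e}\ra^+ v_{i+1}^\mathfrak{e}$ by Theorem~\ref{ch5:simlmtsls}(1); since $T_i\sim v_i^\mathfrak{e}$, I would transport this reduction across the equivalence by applying Lemma~\ref{ch4:ppsim}(2) once for each of the finitely many single steps making up $\ra^+$, obtaining $T_{i+1}$ with $T_i\ra^+ T_{i+1}$ and $T_{i+1}\sim v_{i+1}^\mathfrak{e}$, so the invariant is preserved. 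Thus every essential step contributes at least one genuine $\ls$-reduction step to the chain, while complementary steps leave the current term unchanged.

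Finally, since the $\lmts$-sequence contains infinitely many essential steps, collecting the genuine reduction steps yields an infinite $\ls$-reduction starting from $T_0=v_0^\mathfrak{e}$, contradicting the strong normalizability of the typable term $v_0^\mathfrak{e}$ established above; this contradiction shows no infinite $\lmts$-reduction can exist. The one delicate point, and the step I expect to demand the most care, is the transport of essential reductions across the symmetry equivalence $\sim$ by iterating Lemma~\ref{ch4:ppsim}(2); the remainder is bookkeeping layered on top of the simulation theorem and the complexity argument for the $cl$-rules.
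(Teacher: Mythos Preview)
Your proposal is correct and follows essentially the same approach as the paper: split steps into essential ($\b,\mu,\mut,s_l,s_r$) and complementary ($cl$) ones, use Theorem~\ref{ch5:simlmtsls} together with Lemma~\ref{ch4:ppsim}(2) to postpone $\sim$ and extract a genuine $\ls$-reduction of length at least the number of essential steps, invoke strong normalization of $\ls$ to bound that number, and conclude by noting that the $cl$-rules alone terminate. Your explicit complexity-decrease argument for the $cl$-rules spells out what the paper leaves implicit (``which is impossible''), and your invariant $T_i\sim v_i^{\mathfrak{e}}$ is exactly the paper's postponement of $\sim$, carried out step by step.
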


\begin{proof}
Let $\si$ be a reduction sequence in the $\lmts$-calculus, assume $\si$ is $v_0\raa v_1\ldots \raa v_n$ and $\si$ contains $k\geq 0$ number of $\b$-, $\m$-, $\mut$-, $s_l$- or $s_r$-reductions. By Theorem \ref{ch5:simlmtsls}, $v_0^\mathfrak{e}$, $v_1^\mathfrak{e},\ldots$, $v_n^\mathfrak{e}$ forms a sequence of \smash{$\ls$}-terms, where either $v_i^\mathfrak{e}\ra v_{i+1}^\mathfrak{e}$ or $v_i^\mathfrak{e}\sim v_{i+1}^\mathfrak{e}$ $(0\leq i\leq n-1)$ and, for every $\b$-, $\m$-, $\mut$-, $s_l$- or $s_r$-reduction, there corresponds a reduction step in the \smash{$\ls$}-calculus. By Lemma \ref{ch4:ppsim}, we obtain that $\sim$ can be postponed, that is, there are $w_0$, $w_1,\ldots$, $w_{k+1}$ in $\mathcal{T}$ such that $w_0=v_0^\mathfrak{e}$, $w_{k+1}=v_n^\mathfrak{e}$ and $w_0\ra\ldots \ra w_k\sim w_{k+1}$. This means that we can establish a reduction sequence of length $k$ starting from $v_0^\mathfrak{e}$ in the \smash{$\ls$}-calculus. Hence, by Theorem \ref{ch3:snt} and Corollary \ref{ch3:snlsfull}, an 
infinite reduction sequence starting from $v_0$ can contain only a finite number of $\b$-, $\m$-, $\mut$-, $s_l$- or $s_r$-reductions. Thus there would exist an infinite reduction sequence in the $\lmts$-calculus consisting entirely of $cl_{1,l}$-, $cl_{1,r}$- and $cl_2$-reductions, which is impossible.
\end{proof}

\subsection{A translation of the \texorpdfstring{$\ls$}{lambda-Sym-Prop}-calculus into the \texorpdfstring{$\lmts$}{blambda-mu-bmu}-calculus}

Now we are going to deal with the converse relation. That is we
will present a translation of the $\ls$-calculus into the $\lmts$-calculus which faithfully
reflects the typability relations of one calculus in the other
one. Then we prove that our translation is in fact a simulation of the
$\ls$-calculus in the $\lmts$-calculus.

\begin{defi}\label{ch5:mumutilde}\hfill
\begin{enumerate}

\item The translation $.^\mathfrak{f}: \ma{T} \longrightarrow \mathfrak{T} $ is defined as follows.
\[ M^{\mathfrak{f}}=\left\{ \begin{array}{ll} x & \;\;\;\textrm{ if $\;\;M=x$},\\
\lfl Q^\mathfrak{f} , \widetilde{P^\mathfrak{f}}\rfl & \;\;\;
\textrm{ if $\;\;M=(P\star Q)$},\\
\overline{\mut xN^\mathfrak{f}}& \;\;\;\textrm{ if $\;\;M=\l xN$},\\
\overline{(P^\mathfrak{f} .\widetilde{Q^\mathfrak{f}})}&\;\;\;\textrm{ if $\;\;M=\lan P ,
Q\ran$},\\
\l x\mu \b\lfl N^\mathfrak{f} , \widetilde{x}\rfl & \;\;\;\textrm{ if $\;\;M=\si_1(N)$,
$x\notin Fv(N^\mathfrak{f})$ and $\b\notin Fv(\lfl N^\mathfrak{f} , \widetilde{x}\rfl)$},\\
\l xN^\mathfrak{f} & \;\;\;\textrm{ if $\;\;M=\si_2(N)$ and $x\notin Fv(N^\mathfrak{f})$}.
\end{array}\right. \]
\item The translation $.^\mathfrak{f}$ applies to the types as follows.
\begin{itemize}
\item ${\a}^\mathfrak{f}=\a$,
\item ${({\a}^\bot )}^\mathfrak{f}={\a}^\bot$,
\item $(A\wedge B)^\mathfrak{f} =(A^\mathfrak{f} \ra (B^\mathfrak{f})^\bot )^\bot$,
\item $(A\vee B)^\mathfrak{f} = (A^\mathfrak{f})^\bot \ra B^\mathfrak{f}$.
\end{itemize}
We remark that $.^\mf{f}$ maps the terms of the $\ls$-calculus
with type $\bot$ to $c$-terms of the $\lmts$-calculus, which have
no types. We also have, for all types $A$, ${(A^\bot )}^\mathfrak{f} =
(A^\mathfrak{f})^\bot$. Therefore the translation $.^\mf{f}$ maps equal types to equal types.
\end{enumerate}
\end{defi}

\begin{lem} \label{ch5:typelslmts}
\begin{enumerate}

\item If $\G \;{\v} \; M : A$ and $A\neq \bot$, then $\G^\mathfrak{f}\;{\vv} \; M^\mathfrak{f} :A^\mathfrak{f}$.
\item If $\G \;{\v} \; M : \bot$, then  $M^\mathfrak{f} :(\G^\mathfrak{f}\;{\vv} \;)$.
\end{enumerate}
\end{lem}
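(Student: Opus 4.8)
The plan is to prove both statements simultaneously by induction on the structure of the typing derivation $\Pi$ of $\G \v M : A$ (equivalently, on $cxty(M)$), checking that the translation $.^\mathfrak{f}$ of Definition \ref{ch5:mumutilde} was set up precisely so that each typing rule of the $\ls$-calculus is mirrored by a valid $\lmts$-derivation. The two clauses interact: a variable or a term $M : A$ with $A \neq \bot$ produces an $l$-term typed $\G^\mathfrak{f} \vv M^\mathfrak{f} : A^\mathfrak{f}$, whereas a term $M : \bot$ produces a $c$-term $M^\mathfrak{f} : (\G^\mathfrak{f} \vv)$, so I would carry the two claims together and let each case dispatch to the appropriate clause depending on whether the subterm's type is $\bot$ or an m-type.

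First I would dispose of the base case: if $M = x$ with $\G, x:A \v x:A$, then $M^\mathfrak{f} = x$ and the rule $Var_1$ gives $\G^\mathfrak{f}, x:A^\mathfrak{f} \vv x : A^\mathfrak{f}$, using that $.^\mathfrak{f}$ maps equal types to equal types. Then I would treat each term-forming rule in turn. For $M = (P \star Q) : \bot$ (case (2)), the premises give $\G \v P : A^\bot$ and $\G \v Q : A$ for some m-type $A$; the induction hypothesis yields $\G^\mathfrak{f} \vv P^\mathfrak{f} : (A^\mathfrak{f})^\bot$ and $\G^\mathfrak{f} \vv Q^\mathfrak{f} : A^\mathfrak{f}$, and then the $\widetilde{.}$-rule turns $P^\mathfrak{f}$ into an $r$-term $\widetilde{P^\mathfrak{f}} : ((A^\mathfrak{f})^\bot)^\bot = A^\mathfrak{f}$, so that the $\lfl , \rfl$-rule assembles $\lfl Q^\mathfrak{f} , \widetilde{P^\mathfrak{f}} \rfl : (\G^\mathfrak{f} \vv)$, matching $M^\mathfrak{f}$. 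For $M = \l xN : A^\bot$ I would use that $N : \bot$ falls under clause (2), giving a $c$-term $N^\mathfrak{f} : (\G^\mathfrak{f}, x:A^\mathfrak{f} \vv)$; the $\tilde\mu$-rule produces $\mut x N^\mathfrak{f} : A^\mathfrak{f}$ as an $r$-term, and the $\overline{.}$-rule then gives $\overline{\mut x N^\mathfrak{f}} : (A^\mathfrak{f})^\bot = (A^\bot)^\mathfrak{f}$, which is exactly $M^\mathfrak{f}$. The pairing case $M = \lan P, Q\ran : A \wedge B$ is analogous, building $(P^\mathfrak{f} . \widetilde{Q^\mathfrak{f}})$ via the $(.)$-rule and then complementing, so the resulting type is $(A^\mathfrak{f} \ra (B^\mathfrak{f})^\bot)^\bot = (A \wedge B)^\mathfrak{f}$.

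The subtle cases are the two injections $\si_1$ and $\si_2$, since their translations are deliberately asymmetric. For $M = \si_2(N) : A \vee B$ we have $\G \v N : B$, and $M^\mathfrak{f} = \l x N^\mathfrak{f}$ with $x$ fresh; the $\l$-rule gives $\G^\mathfrak{f} \vv \l x N^\mathfrak{f} : (A^\mathfrak{f})^\bot \ra B^\mathfrak{f} = (A \vee B)^\mathfrak{f}$, the fresh $x$ simply contributing the vacuous hypothesis $x : (A^\mathfrak{f})^\bot$. For $M = \si_1(N) : A \vee B$ with $\G \v N : A$, the translation $\l x \mu \b \lfl N^\mathfrak{f}, \widetilde{x}\rfl$ must be typed: here $x : (A^\mathfrak{f})^\bot$ is introduced by $\l$, the inner command $\lfl N^\mathfrak{f}, \widetilde{x}\rfl$ cuts $N^\mathfrak{f} : A^\mathfrak{f}$ against $\widetilde{x} : (A^\mathfrak{f})^\bot$, and wrapping with a fresh $\mu \b$ coerces the resulting $c$-term up to the required right-hand type $B^\mathfrak{f}$, again giving the arrow type $(A^\mathfrak{f})^\bot \ra B^\mathfrak{f}$. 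The main obstacle I anticipate is exactly bookkeeping these freshness side conditions on $x$ and $\b$ together with the involutivity equation $(A^\bot)^\bot = A$: one must verify at each complementation step that the type genuinely collapses as claimed and that the contexts $\G^\mathfrak{f}$ and $(\D^\mathfrak{f})^\bot$ line up, rather than any deep logical content. Once the injection cases are checked, the induction closes.
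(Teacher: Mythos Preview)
Your proposal is correct and follows exactly the same approach as the paper: a simultaneous induction on the typing derivation, with a case analysis on the last rule applied, verifying that $.^\mathfrak{f}$ was designed so that each $\ls$-rule is mirrored by a derivable $\lmts$-judgment. One small slip: in the $\si_1$ case the $r$-term $\widetilde{x}$ has type $A^\mathfrak{f}$ (via involutivity of ${}^\bot$), not $(A^\mathfrak{f})^\bot$, so the cut $\lfl N^\mathfrak{f},\widetilde{x}\rfl$ matches both sides at type $A^\mathfrak{f}$; also, there is no $\D$ in this lemma, so the remark about ``$(\D^\mathfrak{f})^\bot$ lining up'' is extraneous here.
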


\begin{proof} The proof proceeds by a simultaneous induction on the length of the
derivation in the $\ls$-calculus. We can observe again that the notion of $.^\mathfrak{f}$ in Definition \ref{ch5:mumutilde}
is conceived in a way to make the statements of the lemma true.
Let us only examine some of the typical cases of the first assertion.
\begin{enumerate}

\item Suppose
\[
\frac{\G , x : A\;{\v}\; u : \bot}
{\G\;\v\;\l xu : A^\bot}.
\]
Then, applying the induction hypothesis,
\[
\dfrac{
\dfrac{u^\mathfrak{f} :(\G^\mathfrak{f} , x : A^\mathfrak{f} \;{\vv}\;)}
{\G^\mathfrak{f} \;|\;\mut xu^\mathfrak{f} : A^\mathfrak{f} \;\vv\;}}
{\G^\mathfrak{f} \;\vv\;\overline{\mut xu^\mathfrak{f} }:(A^\mathfrak{f})^\bot}.
\]
\item
If
\[
\frac{\G \;{\v}\; u:A}
{\G \;{\v}\; \si_1(u):A\vee B},
\]
then, we obtain
\[
\dfrac{
\dfrac{
\dfrac{
\dfrac{\;}
{\G^\mathfrak{f} , x:(A^\mathfrak{f})^\bot \;{\vv}\;u^\mathfrak{f} :
A^\mathfrak{f} \;|\;\b :B^\mathfrak{f}} \;\;\;\;\;
\dfrac{\G^\mathfrak{f} , x:(A^\mathfrak{f})^\bot \;{\vv}\;x:(A^\mathfrak{f})^\bot
\;|\;\b :B^\mathfrak{f}}
{\G^\mathfrak{f} , x:(A^\mathfrak{f})^\bot \;|\;\widetilde{x}:
A^\mathfrak{f} \;{\vv}\;\b : B^\mathfrak{f}}}
{\lfl u^\mathfrak{f} , \widetilde{x}\rfl :
(\G^\mathfrak{f} , x:(A^\mathfrak{f})^\bot \;{\vv}\;\b :B^\mathfrak{f})}}
{\G^\mathfrak{f} , x:(A^\mathfrak{f})^\bot \;{\vv}\;
\mu \b \lfl u^\mathfrak{f} , \widetilde{x}\rfl :B^\mathfrak{f}}}
{\G^\mathfrak{f} \;{\vv}\;\l x\mu \b \lfl u^\mathfrak{f} , \widetilde{x}\rfl :
(A^\mathfrak{f})^\bot \ra B^\mathfrak{f}}.
\]

\item From
\[
\frac{\G \;{\v}\; u : A^\bot \;\;\;\;\; \G \;{\v}\; v: A}
{\G \;{\v}\; (u\star v):\bot},
\]
we obtain
\[
\dfrac{
\dfrac{\G^\mathfrak{f} \;{\vv}\; u^\mathfrak{f}:(A^\mathfrak{f})^\bot}
{\G^\mathfrak{f} \;|\; \widetilde{u^\mathfrak{f}}:A^\mathfrak{f}\;{\vv}\;} \;\;\;\;\;\;
\dfrac{\;}
{\G^\mathfrak{f} \;{\vv}\; v^\mathfrak{f}:A^\mathfrak{f}}}
{\lfl v^\mathfrak{f} , \widetilde{u^\mathfrak{f}}\rfl :(\G^\mathfrak{f} \;{\vv}\;)}.
\]
\end{enumerate}
\end{proof}

\noindent Now we turn to the proof of the simulation of the $\ls$-calculus in the $\lmts$-calculus.

\begin{lem} \label{ch5:sublb}
Let  $M,N \in \ma{T}$. Then ${(M[x:=N])}^\mathfrak{f} =M^\mathfrak{f} [x:=N^\mathfrak{f}]$.
\end{lem}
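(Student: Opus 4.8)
The plan is to prove the identity by induction on $cxty(M)$, following the pattern of the companion lemma \ref{ch5:subls} for the reverse translation. Since $.^\mathfrak{f}$ is defined by a case analysis on the head constructor of $M$, each step amounts to computing $(M[x:=N])^\mathfrak{f}$ by first performing the substitution and then applying $.^\mathfrak{f}$, comparing the result with $M^\mathfrak{f}[x:=N^\mathfrak{f}]$, and reducing the two to a common form by the induction hypothesis applied to the immediate subterms of $M$.

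The two base cases are immediate: if $M=x$ both sides equal $N^\mathfrak{f}$, and if $M$ is a variable distinct from $x$ both sides equal $M$. The constructor cases without binders, namely $M=(P\star Q)$ and $M=\lan P,Q\ran$, are pure commutations. For $M=(P\star Q)$ I would rewrite $(M[x:=N])^\mathfrak{f}=\lfl (Q[x:=N])^\mathfrak{f},\widetilde{(P[x:=N])^\mathfrak{f}}\rfl$, use the induction hypothesis on $P$ and $Q$, and note that $[x:=N^\mathfrak{f}]$ distributes over $\lfl\cdot,\cdot\rfl$ and commutes with $\widetilde{\cdot}$; the pairing case is identical, the outer $\overline{\cdot}$, the inner $(\cdot.\cdot)$, and $\widetilde{\cdot}$ all commuting with the substitution.

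The cases involving binders are where the variable conventions must be invoked. For $M=\l yP$ the image is $\overline{\mut y\,P^\mathfrak{f}}$, and pushing $[x:=N^\mathfrak{f}]$ through the $\mut y$ binder requires $y\neq x$ and $y\notin Fv(N^\mathfrak{f})$, which is the usual Barendregt convention. The genuinely delicate case is $M=\si_1(P)$, where $.^\mathfrak{f}$ itself introduces two fresh bound variables, giving $M^\mathfrak{f}=\l z\,\mu\b\,\lfl P^\mathfrak{f},\widetilde z\rfl$ with $z\notin Fv(P^\mathfrak{f})$ and $\b\notin Fv(\lfl P^\mathfrak{f},\widetilde z\rfl)$. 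To move $[x:=N^\mathfrak{f}]$ inside both the $\l z$ and the $\mu\b$ binders I must ensure $z\neq x$, $z\notin Fv(N^\mathfrak{f})$ and $\b\notin Fv(N^\mathfrak{f})$; granting these, the substitution reaches $P^\mathfrak{f}$ and leaves $\widetilde z$ fixed (since $z\neq x$), so the induction hypothesis $(P[x:=N])^\mathfrak{f}=P^\mathfrak{f}[x:=N^\mathfrak{f}]$ finishes the case. The remaining case $M=\si_2(P)$ is the same computation with the single binder $\l z$.

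The main obstacle is thus not any calculation but the handling of the auxiliary variables $z$ and $\b$ that $.^\mathfrak{f}$ creates in the $\si_i$-clauses. I would discharge it by agreeing, once and for all and up to $\a$-equivalence, to choose these variables outside $\{x\}\cup Fv(N^\mathfrak{f})$; with this convention the stated equality holds literally rather than merely up to renaming.
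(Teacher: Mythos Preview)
Your proposal is correct and takes exactly the same approach as the paper, which proves the lemma in one line ``By induction on $cxty(M)$.'' You have simply spelled out the cases in detail, including the routine handling of the fresh bound variables introduced in the $\si_i$-clauses via the Barendregt convention, which the paper leaves implicit.
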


\begin{proof}
By induction on $cxty(M)$.
\end{proof}

\begin{thm} \label{ch5:simlslmts}
Let $M,N \in \ma{T}$. If $M \ra N$, then $M^\mathfrak{f} \raa^+N^\mathfrak{f}$.
\end{thm}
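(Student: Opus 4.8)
The plan is to induct on the derivation of the one-step reduction $M \ra N$ viewed as the compatible closure of the rules, with the substitution identity of Lemma \ref{ch5:sublb}, $(M[x:=N])^\mathfrak{f}=M^\mathfrak{f}[x:=N^\mathfrak{f}]$, as the workhorse. I read $\ra$ here as ranging over the four rules $\b,\bb,\pi,\pib$; the permutation rules $\eta,\eta^\bot,Triv$ were postponed in Section~1, so by Corollary \ref{ch3:snlsfull} simulating these four is what is needed to transfer strong normalization.

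First I would dispose of the congruence cases. The map $.^\mathfrak{f}$ is compositional: the translation of each immediate subterm of $M$ occurs as a subterm of $M^\mathfrak{f}$, since $(P\sta Q)^\mathfrak{f}=\lfl Q^\mathfrak{f},\wt{P^\mathfrak{f}}\rfl$, $(\l xN)^\mathfrak{f}=\up{\mut xN^\mathfrak{f}}$, $\lan P,Q\ran^\mathfrak{f}=\up{(P^\mathfrak{f}.\wt{Q^\mathfrak{f}})}$, and $\si_1(N)^\mathfrak{f}=\l x\m\b\lfl N^\mathfrak{f},\wt x\rfl$. Hence, when the contracted redex lies strictly inside a subterm $M_0$, the induction hypothesis $M_0^\mathfrak{f}\raa^+N_0^\mathfrak{f}$ lifts through the surrounding constructor by compatibility of $\raa$, yielding $M^\mathfrak{f}\raa^+N^\mathfrak{f}$. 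It then remains to simulate each of the four rules with the redex at the root.

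These principal cases share one shape: the translation of an $\ls$-redex is not yet a $\lmts$-redex, and one first performs an administrative complementer step --- $cl_{1,r}$ for the two non-$\bot$ rules (cancelling a $\wt{\up{\cdot}}$) and $cl_2$ for the two $\bot$-rules (flipping $\lfl\up e,\wt t\rfl$ into the cut $\lfl t,e\rfl$) --- after which the genuine logical rule fires. For $\b$ one has $(\l xP\sta Q)^\mathfrak{f}=\lfl Q^\mathfrak{f},\wt{\up{\mut xP^\mathfrak{f}}}\rfl\raa_{cl_{1,r}}\lfl Q^\mathfrak{f},\mut xP^\mathfrak{f}\rfl\raa_{\mut}P^\mathfrak{f}[x:=Q^\mathfrak{f}]=(P[x:=Q])^\mathfrak{f}$, the last equality being Lemma \ref{ch5:sublb}; $\bb$ reaches the same reduct via $cl_2$. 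For $\pi$ with $i=1$ one computes $(\lan P_1,P_2\ran\sta\si_1(Q_1))^\mathfrak{f}\raa_{cl_{1,r}}\lfl\l x\m\b\lfl Q_1^\mathfrak{f},\wt x\rfl,(P_1^\mathfrak{f}.\wt{P_2^\mathfrak{f}})\rfl\raa_{\l}\raa_{\m}\raa_{\mut}\lfl Q_1^\mathfrak{f},\wt{P_1^\mathfrak{f}}\rfl=(P_1\sta Q_1)^\mathfrak{f}$; the $\m$-step occurs only for $i=1$, erasing the dummy $\m\b$-abstraction, and the freshness provisos built into $\si_i(\cdot)^\mathfrak{f}$ (namely $x\notin Fv(Q_i^\mathfrak{f})$ and $\b$ not free in $\lfl Q_i^\mathfrak{f},\wt x\rfl$) are precisely what make the $\m$- and $\mut$-substitutions select $P_i$ and discard the other component.

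The main obstacle is the $\pib$ case. Its contractum is $(Q_i\sta P_i)$, so $N^\mathfrak{f}=\lfl P_i^\mathfrak{f},\wt{Q_i^\mathfrak{f}}\rfl$, whereas the reduction above --- which after the opening $cl_2$-step coincides with the $\pi$ computation --- lands on $\lfl Q_i^\mathfrak{f},\wt{P_i^\mathfrak{f}}\rfl=(P_i\sta Q_i)^\mathfrak{f}$, and these two cuts agree only up to the symmetry $\sim$ of $\star$. I would resolve this by reading the conclusion up to $\sim$ and invoking the bisimulation of Lemma \ref{ch4:ppsim}: landing on the $\sim$-variant is harmless for the strong normalization corollary, because $\sim$ commutes with reduction and has finite classes, so an infinite $\ls$-reduction still forces an infinite $\lmts$-reduction. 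Pinning down this interplay, and checking all the freshness side-conditions in the $\pi$/$\pib$ chains, is where the genuine care lies; the $\b$/$\bb$ cases are by comparison immediate.
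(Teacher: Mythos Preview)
Your treatment of the congruence cases and of the root redexes $\b$, $\bb$, and $\pi$ is exactly what the paper does: the same administrative $cl$-step followed by the logical rules, with Lemma~\ref{ch5:sublb} supplying the substitution identity. The paper in fact displays only these three cases and leaves the remaining ones implicit.

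Where your proposal has a genuine gap is the $\pib$ case. Your diagnosis is correct: after the opening $cl_2$-step the computation coincides with the $\pi$ case and terminates at $\lfl Q_i^{\mathfrak f},\widetilde{P_i^{\mathfrak f}}\rfl$, whereas $(Q_i\star P_i)^{\mathfrak f}=\lfl P_i^{\mathfrak f},\widetilde{Q_i^{\mathfrak f}}\rfl$. But your proposed repair does not type-check: the relation $\sim$ and Lemma~\ref{ch4:ppsim} are defined on $\ma{T}$, i.e.\ on \smash{$\ls$}-terms, not on the $\lmts$-terms where the mismatch lives. You cannot ``read the conclusion up to $\sim$'' when the two terms you want to identify are the $c$-terms $\lfl Q_i^{\mathfrak f},\widetilde{P_i^{\mathfrak f}}\rfl$ and $\lfl P_i^{\mathfrak f},\widetilde{Q_i^{\mathfrak f}}\rfl$; nothing in the paper relates these. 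What would be needed is a separate congruence on $\mathfrak T$ generated by $\lfl t,\widetilde{s}\rfl\approx\lfl s,\widetilde{t}\rfl$, together with a $\lmts$-analogue of Lemma~\ref{ch4:ppsim} showing that $\approx$ bisimulates $\raa$; only then would your weakening ``$M^{\mathfrak f}\raa^+\cdot\approx N^{\mathfrak f}$'' suffice for the SN corollary. None of that apparatus is set up, so as written the $\pib$ case is unresolved. (Your restriction of $\ra$ to the four $\b\pi$-rules is harmless for the intended SN transfer, via Corollary~\ref{ch3:snlsfull}, though it does weaken the theorem as stated.)
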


\begin{proof} Let us prove some of the more interesting cases.
\begin{enumerate} 

\item If $M=(\l xP\star Q)\ra_{\b}P[x:=Q]=N$, then, applying Lemma \ref{ch5:sublb},

$M^\mathfrak{f} = \lfl Q^\mathfrak{f} ,
\widetilde{\overline{\mut xP^\mathfrak{f} }}\rfl
\raa_{cl_{1}} {}\lfl
Q^\mathfrak{f} , \mut xP^\mathfrak{f} \rfl
\raa_{\mut} {} P^\mathfrak{f} [x:=Q^\mathfrak{f} ]=N^\mathfrak{f}$.

\item If $M=(Q\star \l xP)\ra_{\b_{\bot}}P[x:=Q]=N$, then

$M^\mathfrak{f} = \lfl \overline{\mut xP^\mathfrak{f} } ,
\widetilde{Q^\mathfrak{f}}\rfl \raa_{cl_{2}} {} \lfl Q^\mathfrak{f}, \mut
xP^\mathfrak{f} \rfl
\raa_{\mut} {} P^\mathfrak{f} [x:=Q^\mathfrak{f} ]=N^\mathfrak{f}$.

\item If $M=(\lan P , Q\ran \star \si_1(R))\ra_{\pi}(P\star R)=N$, then

$M^\mathfrak{f} = \lfl \l x\mu b\lfl R^\mathfrak{f} ,
\widetilde{x}\rfl ,
\widetilde{\overline{(P^\mathfrak{f} .\widetilde{Q^\mathfrak{f}})}}\rfl \raa_{cl_1}
{} \lfl \l x\mu b\lfl R^\mathfrak{f} , \widetilde{x}\rfl ,
(P^\mathfrak{f}
.\widetilde{Q^\mathfrak{f}},)\rfl \raa_{\l}$

$\lfl P^\mathfrak{f} , \mut x\lfl \mu b\lfl R^\mathfrak{f} ,
\widetilde{x}\rfl , \widetilde{Q^\mathfrak{f}}\rfl \rfl \raa_{\mut}
{} \lfl \mu b\lfl R^\mathfrak{f} ,
\widetilde{P^\mathfrak{f}}\rfl , \widetilde{Q^\mathfrak{f}}\rfl \raa_{\mu}
{} \lfl R^\mathfrak{f} , \widetilde{P^\mathfrak{f}}\rfl =
N^\mathfrak{f}$.\qedhere
\end{enumerate}
\end{proof}

We could have as well demonstrated that the $\lmts$-calculus is strongly normalizable by applying
the method presented in Section 3 as accomplished by Batty\'anyi \cite{Batt}. The following result states that in this
case the strong normalizability of the \smash{$\ls$}-calculus would arise as a direct consequence
of that of the $\lmts$-calculus.

\begin{cor}
If the $\lmts$-calculus is strongly normalizable, then the same is
true for the $\ls$-calculus as well.
\end{cor}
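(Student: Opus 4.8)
The plan is to argue by contradiction, pushing an infinite $\ls$-reduction forward along the translation $.^\mathfrak{f}$ so as to obtain an infinite $\lmts$-reduction. The whole argument hinges on the fact that Theorem~\ref{ch5:simlslmts} simulates every single $\ls$-step by \emph{at least one} $\lmts$-step (the $\raa^+$), so that no reduction length can ever collapse to zero.

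Concretely, let $P$ be a typable $\ls$-term, say $\G\;\v\;P:A$, and suppose towards a contradiction that $P$ is not strongly normalizable. Then there is an infinite reduction sequence $P=P_0\ra P_1\ra P_2\ra\cdots$ in the $\ls$-calculus. First I would invoke Lemma~\ref{ch5:typelslmts} to see that the translate $P_0^\mathfrak{f}$ is a typable term of the $\lmts$-calculus (of the appropriate kind according to whether $A=\bot$), so that the assumed strong normalization of $\lmts$ applies to it. Next, applying Theorem~\ref{ch5:simlslmts} to each step $P_i\ra P_{i+1}$ yields $P_i^\mathfrak{f}\raa^+P_{i+1}^\mathfrak{f}$ for every $i\geq 0$.

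Concatenating these segments produces an infinite reduction sequence
\[
P_0^\mathfrak{f}\raa^+P_1^\mathfrak{f}\raa^+P_2^\mathfrak{f}\raa^+\cdots
\]
in the $\lmts$-calculus, issuing from the typable term $P_0^\mathfrak{f}$. Since each $\raa^+$ furnishes at least one genuine reduction step, this sequence never terminates, contradicting the hypothesis that $\lmts$ is strongly normalizing. Hence $P$ is strongly normalizable; as $P$ was an arbitrary typable $\ls$-term, this is precisely the strong normalization of the $\ls$-calculus (cf.\ Theorem~\ref{SN1}).

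The pleasant point is that at this stage there is essentially no obstacle left: all the genuine work was already spent on the strict simulation of Theorem~\ref{ch5:simlslmts} and on the typing-preservation of Lemma~\ref{ch5:typelslmts}. The single detail to keep in view is the \emph{strictness} expressed by $\raa^+$: had the simulation only guaranteed $\raa^*$, an $\ls$-step could be matched by zero $\lmts$-steps and the image sequence might be finite, which would defeat the argument. It is exactly the $\raa^+$ of Theorem~\ref{ch5:simlslmts} that makes the transfer of infiniteness sound.
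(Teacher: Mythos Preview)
Your proof is correct and follows exactly the paper's approach: the paper's proof consists of the single line ``By Theorem~\ref{ch5:simlslmts}'', and you have faithfully unpacked this, correctly emphasizing that the strictness $\raa^+$ (rather than $\raa^*$) is what transfers infiniteness. Your invocation of Lemma~\ref{ch5:typelslmts} to ensure typability of the translate is a sensible addition that the paper leaves implicit.
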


\begin{proof}
By Theorem $\ref{ch5:simlslmts}$.
\end{proof}

\subsection{The connection between the two translations}

In this subsection we  examine the connection between the two transformations.
We prove that both compositions
$.^{\mf{e}^\mf{f}}:\mf{T} \longrightarrow \mf{T}$ and $.^{\mf{f}^\mf{e}}:\ma{T} \longrightarrow  \ma{T}$
 are such that we can get back the original terms by performing some steps of reduction
on $u^{\mf{e}^\mf{f}}$ or on $M^{\mf{f}^\mf{e}}$, respectively. That is, the following
theorems are valid. The case of $.^{\mf{f}^\mf{e}}$ is the easier one.

First we describe the effect of $.^{\mf{f}^\mf{e}}$ on the typing relations.

\begin{lem}
 If $\G \;{\vdash} \; M : A$, then $\G\;{\vdash} \; {M^\mathfrak{f}}^\mathfrak{e}:A$.
\end{lem}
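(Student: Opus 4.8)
The plan is to obtain the statement by composing the two typing lemmas already at our disposal, once we have checked that the two translations cancel each other at the level of types and of contexts. So the first step I would carry out is the purely type-theoretic identity
$$(A^\mathfrak{f})^\mathfrak{e}=A$$
for every $m$-type $A$, proved by induction on the structure of $A$. The atomic cases $A=\a$ and $A=\a^\bot$ are immediate from the clauses defining the two translations on types (Definitions \ref{ch5:pi} and \ref{ch5:mumutilde}). For the conjunction $A=B\wedge C$ I would unfold $(B\wedge C)^\mathfrak{f}=(B^\mathfrak{f}\ra (C^\mathfrak{f})^\bot)^\bot$ and then apply $.^\mathfrak{e}$, using $(X^\bot)^\mathfrak{e}=(X^\mathfrak{e})^\bot$ and $(X\ra Y)^\mathfrak{e}=(X^\mathfrak{e})^\bot\vee Y^\mathfrak{e}$, to reach
$$((B\wedge C)^\mathfrak{f})^\mathfrak{e}=\bigl(((B^\mathfrak{f})^\mathfrak{e})^\bot\vee ((C^\mathfrak{f})^\mathfrak{e})^\bot\bigr)^\bot=(B^\bot\vee C^\bot)^\bot,$$
and then cancel the negations by the de Morgan clause together with involutivity in the $\ls$-calculus, $(B^\bot\vee C^\bot)^\bot=(B^\bot)^\bot\wedge(C^\bot)^\bot=B\wedge C$. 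The disjunction $A=B\vee C$ is analogous, starting from $(B\vee C)^\mathfrak{f}=(B^\mathfrak{f})^\bot\ra C^\mathfrak{f}$ and using involutivity directly. Since contexts are translated declaration by declaration, this type identity immediately gives $(\G^\mathfrak{f})^\mathfrak{e}=\G$ for every $\ls$-context $\G$.

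With these identities in place, the second step is a case split on whether $A=\bot$. If $A\neq\bot$, Lemma \ref{ch5:typelslmts}(1) gives $\G^\mathfrak{f}\;\vv\;M^\mathfrak{f}:A^\mathfrak{f}$ (with empty $r$-context), and Lemma \ref{ch5:typelmtsls}(1) applied to this $\lmts$-judgement yields $(\G^\mathfrak{f})^\mathfrak{e}\;\v\;{M^\mathfrak{f}}^\mathfrak{e}:(A^\mathfrak{f})^\mathfrak{e}$; rewriting by the two identities above turns this into $\G\;\v\;{M^\mathfrak{f}}^\mathfrak{e}:A$. If $A=\bot$, then $M^\mathfrak{f}$ is a $c$-term, Lemma \ref{ch5:typelslmts}(2) gives $M^\mathfrak{f}:(\G^\mathfrak{f}\;\vv\;)$, and feeding this into Lemma \ref{ch5:typelmtsls}(3) produces $(\G^\mathfrak{f})^\mathfrak{e}\;\v\;{M^\mathfrak{f}}^\mathfrak{e}:\bot$, i.e. $\G\;\v\;{M^\mathfrak{f}}^\mathfrak{e}:\bot$ after rewriting the context. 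This completes the argument.

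I expect the only genuine work to be the type identity, and within it the $\wedge$ and $\vee$ steps, where the extra negation introduced by $.^\mathfrak{f}$ must be driven through the de Morgan laws and annihilated against the inner negations by involutivity; everything else is bookkeeping. The one subtlety to keep an eye on is that the intermediate $\lmts$-judgement carries an empty $r$-context --- the $r$-variable $\b$ appearing in the $\si_1$-clause of $.^\mathfrak{f}$ is bound by the surrounding $\mu\b$ --- so that the summand $(\D^\mathfrak{e})^\bot$ in Lemma \ref{ch5:typelmtsls} contributes nothing and the context recovered by $.^\mathfrak{e}$ is exactly $\G$.
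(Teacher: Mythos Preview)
Your proposal is correct and follows exactly the approach the paper indicates (``Combining Lemmas \ref{ch5:typelslmts} and \ref{ch5:typelmtsls}''), with the added merit that you spell out the type identity $(A^\mathfrak{f})^\mathfrak{e}=A$, the resulting context identity, and the $A=\bot$ versus $A\neq\bot$ case split that the paper leaves implicit. The observation about the empty $r$-context is also apt and is precisely why the composition goes through cleanly.
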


\begin{proof}
Combining Lemmas \ref{ch5:typelslmts} and \ref{ch5:typelmtsls}.
\end{proof}

\begin{thm}\label{comp:fe}
Let $M \in \ma{T}$. Then ${M^{\mf{f}}}^\mf{e} \ra^* M$.
\end{thm}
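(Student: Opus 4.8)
The plan is to argue by induction on the structure of $M$, relying on three easily-verified facts about the back-translation $.^{\mf{e}}$: it erases the negation markers, so that $(\widetilde{h})^{\mf{e}}=h^{\mf{e}}$ and $(\overline{u})^{\mf{e}}=u^{\mf{e}}$; it satisfies $(\lfl v,u\rfl)^{\mf{e}}=(u^{\mf{e}}\star v^{\mf{e}})$, $(\mut x\,p)^{\mf{e}}=\l x\,p^{\mf{e}}$ and $(\mu\a\,p)^{\mf{e}}=\l\overline{\a}\,p^{\mf{e}}$; and it preserves free $l$-variables, in the sense that an $l$-variable not occurring free in a $\lmts$-term $t$ does not occur free in $t^{\mf{e}}$ (the only new variables introduced by $.^{\mf{e}}$, namely the $y,z$ hidden in the $\pi_i$ and the bound $\overline{\a}$, are bound and fresh). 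This last observation is exactly what will make the substitutions produced below vacuous.

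First I would dispose of the four structural cases in which no $\pi_i$ is unfolded. For $M=x$ we have ${x^{\mf{f}}}^{\mf{e}}=x$. For $M=(P\star Q)$ we get ${M^{\mf{f}}}^{\mf{e}}=(\lfl Q^{\mf{f}},\widetilde{P^{\mf{f}}}\rfl)^{\mf{e}}=({P^{\mf{f}}}^{\mf{e}}\star {Q^{\mf{f}}}^{\mf{e}})$, which reduces to $(P\star Q)$ by the induction hypotheses on $P$ and $Q$. For $M=\l xN$ we obtain ${M^{\mf{f}}}^{\mf{e}}=(\overline{\mut x\,N^{\mf{f}}})^{\mf{e}}=\l x\,{N^{\mf{f}}}^{\mf{e}}\ras\l xN$, and for $M=\lan P,Q\ran$ we obtain ${M^{\mf{f}}}^{\mf{e}}=(\overline{(P^{\mf{f}}.\widetilde{Q^{\mf{f}}})})^{\mf{e}}=\lan {P^{\mf{f}}}^{\mf{e}},{Q^{\mf{f}}}^{\mf{e}}\ran\ras\lan P,Q\ran$, again by the induction hypotheses.

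The heart of the proof is the two injection cases, where the $\pi_i$ get unfolded using $\pi_i(y)=\l z(y\star\si_i(z))$; write $N'={N^{\mf{f}}}^{\mf{e}}$, so that $N'\ras N$ by the induction hypothesis and $x,y\notin Fv(N')$. For $M=\si_2(N)$, with $M^{\mf{f}}=\l xN^{\mf{f}}$, one computes ${M^{\mf{f}}}^{\mf{e}}=\l y(\l x(\pi_2(y)\star N')\star\pi_1(y))$; a $\reb$-step, which erases the argument $\pi_1(y)$ because $x\notin Fv(N')$, followed by a second $\reb$-step contracting $\pi_2(y)$ against $N'$, yields $\l y(y\star\si_2(N'))$, and a final $\ra_{\eta^{\bot}}$-step (legitimate as $y$ is fresh) gives $\si_2(N')\ras\si_2(N)=M$. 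For $M=\si_1(N)$, with $M^{\mf{f}}=\l x\,\mu\b\lfl N^{\mf{f}},\widetilde{x}\rfl$, the back-translation of the inner $\mu$-term is $\l\overline{\b}(x\star N')$, so ${M^{\mf{f}}}^{\mf{e}}=\l y(\l x(\pi_2(y)\star\l\overline{\b}(x\star N'))\star\pi_1(y))$; reducing, a $\reb$-step substitutes $\pi_1(y)$ for $x$, then a $\rebb$-step contracts $\pi_2(y)$ against $\l\overline{\b}(\pi_1(y)\star N')$ and discards the vacuous binder $\overline{\b}$, giving $\l y(\pi_1(y)\star N')$; one more $\reb$-step contracting $\pi_1(y)$ produces $\l y(y\star\si_1(N'))$, and a final $\ra_{\eta^{\bot}}$-step yields $\si_1(N')\ras\si_1(N)=M$.

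The only real obstacle is the free-variable bookkeeping that legitimizes these reduction chains: the vacuous substitutions (of $\pi_1(y)$ for $x$ and of $\pi_2(y)$ for $\overline{\b}$) and the applicability of the $\ra_{\eta^{\bot}}$- and $\rebb$-steps all hinge on the side conditions built into the definition of $.^{\mf{f}}$, namely $x\notin Fv(N^{\mf{f}})$ and $\b\notin Fv(\lfl N^{\mf{f}},\widetilde{x}\rfl)$, together with the free-variable preservation of $.^{\mf{e}}$ recorded at the outset. Once these are in place the two injection cases go through mechanically, and all remaining cases are immediate.
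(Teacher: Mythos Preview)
Your proof is correct and follows essentially the same approach as the paper's: structural induction on $M$, unfolding the two translations, and performing the evident $\b$-, $\b^\bot$- and $\eta^\bot$-reductions, with the free-variable side conditions from the definition of $.^{\mf{f}}$ ensuring the vacuous substitutions and the $\eta^\bot$-steps go through. The only cosmetic difference is the order of reductions in the $\si_1(N)$ case: the paper first fires the inner $\b^\bot$ to discard $\overline{\b}$, then uses $\eta^\bot$ to eliminate $\l x$, whereas you first $\b$-substitute $\pi_1(y)$ for $x$ and then fire $\b^\bot$; both sequences are valid and reach the same target.
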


\begin{proof}
By induction on $cxty(M)$. We consider only the more interesting cases.
\begin{enumerate}

\item  If $M=(P\star Q)$, then
${(P\star Q)^\mf{f}}^\mf{e}=\lfl Q^\mf{f},\widetilde{P^\mf{f}}\rfl^\mf{e}=({P^\mf{f}}^\mf{e}\star{Q^\mf{f}}^\mf{e})\ra^*(P\star Q)$.
\item If $M=\lan P,Q\ran$, then
${\lan P,Q\ran^\mf{f}}^\mf{e}=\overline{({P^\mf{f}.\widetilde{Q^\mf{f}}})}^\mf{e}=\lan {P^\mf{f}}^\mf{e},{Q^\mf{f}}^\mf{e}\ran\ra^*
\lan P,Q\ran$.
\item If $M=\si_1(N)$, then

${\si_1(N)^\mf{f}}^\mf{e}={\l x\mu \b\lfl N^\mf{f},\widetilde{x}\rfl}^\mf{e}=\l y(\l x({\pi}_2(y)\star {(\mu \b\lfl N^\mf{f},
\widetilde{x}\rfl)}^\mathfrak{e})\star {\pi}_1(y))=\\
\l y(\l x({\pi}_2(y)\star \l \overline{\b}(x\star {N^\mf{f}}^\mathfrak{e}))\star {\pi}_1(y))\ra_{\b^\bot}\l y
(\l x(x\star {N^\mf{f}}^\mathfrak{e})\star {\pi}_1(y))\ra_{\eta^\bot}\\ \l y({N^\mf{f}}^\mathfrak{e}\star {\pi}_1(y))
\ra_{\b}\l y(y\star \si_1({N^\mf{f}}^\mf{e}))\ra_{\eta^\bot}
\si_1({N^\mf{f}}^\mf{e})\ra^*\si_1(N)$.\qedhere
\end{enumerate}
\end{proof}

\noindent We begin to examine the composition ${.^\mf{e}}^\mf{f}:\mf{T}\ra\mf{T}$ for an arbitrary $u$.
First we make the following observation.

\begin{lem}
\begin{enumerate}

\item If $\G \;{\vv} \; t : A\; |\;  \D$, then $\G,
\D^\bot\;{\vv} \; {t^\mathfrak{e}}^\mathfrak{f}:A$.
\item If $\G \; |\; e : A\;{\vv} \;  \D$, then $\G,\D^\bot\;{\vv} \; {e^\mathfrak{e}}^\mathfrak{f}: A^\bot$.
\item If $p:(\G\;{\vv} \;\D )$, then ${p^\mathfrak{e}}^\mathfrak{f}:(\G,\D^\bot\;{\vv} )$.
\end{enumerate}
\end{lem}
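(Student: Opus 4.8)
The plan is to obtain all three statements at once by composing the two typing-translation lemmas already established, namely Lemma \ref{ch5:typelmtsls} (for $.^\mf{e}$) and Lemma \ref{ch5:typelslmts} (for $.^\mf{f}$), in exactly the style used for the companion composition ${M^\mf{f}}^\mf{e}$ in the lemma just above. Before composing, the one fact I would secure is that $.^\mf{f}$ inverts $.^\mf{e}$ at the level of types: for every $\lmts$-type $A$ one has $(A^\mf{e})^\mf{f}=A$. Granting this, the argument is a short diagram chase, and the real content sits in this type identity together with the matching of contexts.

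I would prove $(A^\mf{e})^\mf{f}=A$ by induction on $A$, using two ingredients available from the definitions: the involutivity equation $(A^\bot)^\bot=A$, and the remark of Definition \ref{ch5:mumutilde} that $(C^\bot)^\mf{f}=(C^\mf{f})^\bot$ for every $\ls$-type $C$. The atomic case is immediate since $\a^\mf{e}=\a$ and $\a^\mf{f}=\a$. For $A=B^\bot$ one computes $(A^\mf{e})^\mf{f}=((B^\mf{e})^\bot)^\mf{f}=((B^\mf{e})^\mf{f})^\bot=B^\bot=A$ by the remark and the induction hypothesis. For $A=B\ra C$ one has $A^\mf{e}=(B^\mf{e})^\bot\vee C^\mf{e}$, so $(A^\mf{e})^\mf{f}=(((B^\mf{e})^\bot)^\mf{f})^\bot\ra(C^\mf{e})^\mf{f}=(B^\bot)^\bot\ra C=B\ra C=A$, again by the remark, the induction hypothesis and involutivity. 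The same identity, read on contexts, gives $(\G^\mf{e})^\mf{f}=\G$ and, because $(\D^\mf{e})^\bot=\{\,\overline{\a}:(A^\mf{e})^\bot\mid \a:A\in\D\,\}$ and $((A^\mf{e})^\bot)^\mf{f}=A^\bot$, also $((\D^\mf{e})^\bot)^\mf{f}=\D^\bot$; here the original $r$-variables $\a$ re-emerge as $l$-variables $\overline{\a}$ carrying the negated types.

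With these identities in hand I would run the composition separately on each clause. For (1), Lemma \ref{ch5:typelmtsls}(1) turns $\G\vv t:A\mid\D$ into $\G^\mf{e},(\D^\mf{e})^\bot\v t^\mf{e}:A^\mf{e}$; since $.^\mf{e}$ sends every $\lmts$-type to an m-type, $A^\mf{e}\neq\bot$, so Lemma \ref{ch5:typelslmts}(1) applies and yields $(\G^\mf{e},(\D^\mf{e})^\bot)^\mf{f}\vv {t^\mf{e}}^\mf{f}:(A^\mf{e})^\mf{f}$, which by the type and context identities is precisely $\G,\D^\bot\vv {t^\mf{e}}^\mf{f}:A$. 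Clause (2) is identical except that Lemma \ref{ch5:typelmtsls}(2) delivers the type $(A^\mf{e})^\bot$, whose $.^\mf{f}$-image is $A^\bot$. For (3) the intermediate $\ls$-judgment has type $\bot$, so I would instead invoke Lemma \ref{ch5:typelslmts}(2), obtaining ${p^\mf{e}}^\mf{f}:((\G^\mf{e},(\D^\mf{e})^\bot)^\mf{f}\vv)=(\G,\D^\bot\vv)$. The step I expect to be the main obstacle is the context bookkeeping: one must check that the barred variables are handled consistently and that applying $.^\mf{e}$ and then $.^\mf{f}$ moves each $r$-declaration $\a:A$ to the $l$-declaration $\overline{\a}:A^\bot$ exactly as the $\D^\bot$ notation prescribes, together with the verification that $A^\mf{e}$ and $(A^\mf{e})^\bot$ are never $\bot$ so that clause (1) of Lemma \ref{ch5:typelslmts} is legitimately applicable.
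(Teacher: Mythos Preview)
Your proposal is correct and takes exactly the same approach as the paper, which proves the lemma in one line by ``Combining Lemmas \ref{ch5:typelmtsls} and \ref{ch5:typelslmts}.'' You have simply spelled out the details the paper leaves implicit, in particular the type identity $(A^{\mf{e}})^{\mf{f}}=A$ and the matching of contexts, which are indeed what make the composition go through.
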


\begin{proof}
Combining Lemmas \ref{ch5:typelmtsls} and \ref{ch5:typelslmts}.
\end{proof}

Theorem \ref{comp:fe} states that, if $M$ is an \smash{$\ls$}-term, then $M$ can be related to ${M^{\mf{f}}}^\mf{e}$
by the reductions in the \smash{$\ls$}-calculus. We note that we are not able to obtain $u$ from ${{u^{\mf{e}}}}^\mf{f}$
in such a way. We can find a term $T$ instead such that ${{u^{\mf{e}}}}^\mf{f}\raa^* T(u)$. The function $T$ can intuitively
be considered as the description how \smash{$\ls$}-connectives can be embedded into the $\lmts$-calculus.
It turns out that the $\lmts$-calculus translates the \smash{${\ls}$}-terms not so smoothly as it was the case with the
other direction.

\begin{defi}\label{comp:T}
We define a function $T$ assigning a $\lmts$-term to a
$\lmts$-term.\\\\
\begin{tabular}{ll}
\begin{minipage}[t]{220pt}
\begin{itemize}
\item $T(x)=x$,
\item $T(\l xu)=\overline{\mut y\lfl T(u)[x:=p_1(y)] , \wt{p_2(y)}\rfl}$,
\item  $T(\m \a p)=\overline{\mut \overline{\a}T(p)}$,
\item $T(\overline{u})=T(u)$,
\end{itemize}
\end{minipage}
&
\begin{minipage}[t]{150pt}
\begin{itemize}
\item  $T(\a)=\overline{\a}$,
\item  $T((u.v))=\lan T(u) , T(v)\ran$,
\item $T(\mut xp)=\overline{\mut xT(p)}$,
\item  $T(\wt{h})=T(h)$,
\item  $T(\lfl t ,e\rfl)=\lfl T(t) ,\wt{T(e)}\rfl$.
\end{itemize}
\end{minipage}
\end{tabular}
\end{defi}

\begin{thm}
Let $u \in \mathfrak{T}$. We have ${u^{\mf{e}}}^\mf{f}\raa^* T(u)$.
\end{thm}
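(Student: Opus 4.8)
The plan is to proceed by structural induction on $u$, distinguishing cases according to the outermost term-former among the nine constructors of the $\lmts$-calculus. In each case I would first unfold $u^{\mf{e}}$ from Definition \ref{ch5:pi}, then unfold $.^{\mf{f}}$ from Definition \ref{ch5:mumutilde}, and finally exhibit an explicit $\raa^*$-reduction to $T(u)$. The two facts that make the induction close are Lemmas \ref{ch5:subls} and \ref{ch5:sublb}, which state that both translations commute with substitution, together with the elementary observations that $\raa^*$ is closed under the term-forming contexts and that $N\raa^* N'$ entails $M[x:=N]\raa^* M[x:=N']$. The induction hypothesis will be used in the shape ${v^{\mf{e}}}^{\mf{f}}\raa^* T(v)$ for each immediate subterm $v$ of $u$.

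Most cases are routine, because there $.^{\mf{e}}$ and $.^{\mf{f}}$ simply mirror the clauses defining $T$. For $u=x$ and $u=\a$ both translations are immediate and give $T(x)=x$ and $T(\a)=\overline{\a}$. For $u=\overline{e}$ and $u=\wt{t}$ the map $.^{\mf{e}}$ merely forgets the bar (resp. the tilde), so ${u^{\mf{e}}}^{\mf{f}}$ reduces by the induction hypothesis to $T(e)$ (resp. $T(t)$), which is $T(\overline{e})$ (resp. $T(\wt{t})$) by definition. For $u=\lfl t,e\rfl$ we have $u^{\mf{e}}=(e^{\mf{e}}\star t^{\mf{e}})$, hence ${u^{\mf{e}}}^{\mf{f}}=\lfl {t^{\mf{e}}}^{\mf{f}},\wt{{e^{\mf{e}}}^{\mf{f}}}\rfl$, which reduces to $\lfl T(t),\wt{T(e)}\rfl=T(\lfl t,e\rfl)$ upon applying the induction hypothesis to the two components inside the context. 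The cases $u=\mu\a p$ and $u=\mut xp$ are alike: $u^{\mf{e}}$ is the $\ls$-abstraction $\l\overline{\a}\,p^{\mf{e}}$ (resp. $\l x\,p^{\mf{e}}$), so ${u^{\mf{e}}}^{\mf{f}}=\overline{\mut\overline{\a}\,{p^{\mf{e}}}^{\mf{f}}}$ (resp. $\overline{\mut x\,{p^{\mf{e}}}^{\mf{f}}}$), and the induction hypothesis on $p$ yields $T(\mu\a p)$ (resp. $T(\mut xp)$). The case $u=(t.e)$ is handled the same way, the pair $\lan\cdot,\cdot\ran$ produced by $.^{\mf{e}}$ being sent by $.^{\mf{f}}$ to the clause of $T$ for application.

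The one laborious case is $u=\l x v$. As a preliminary I would record that the images of the projections $\raa^*$-reduce to the designated terms in the definition of $T$, namely $(\pi_i(y))^{\mf{f}}\raa^* p_i(y)$ for $i\in\{1,2\}$, which is obtained by unfolding $\pi_i(y)=\l z(y\star\si_i(z))$ through $.^{\mf{f}}$ and the clauses for $\si_1,\si_2$. Writing $V=v^{\mf{e}}$, we have $u^{\mf{e}}=\l y(\l x(\pi_2(y)\star V)\star\pi_1(y))$, and unfolding $.^{\mf{f}}$ and reducing the projection parts gives
\[
{u^{\mf{e}}}^{\mf{f}}\raa^*\overline{\mut y\,\lfl p_1(y),\ \wt{\,\overline{\mut x\lfl V^{\mf{f}},\wt{p_2(y)}\rfl}\,}\rfl}.
\]
A single $cl_{1,r}$-step cancels the inner $\wt{\overline{\cdot}}$, and one $\mut$-step substitutes $p_1(y)$ for $x$; since $x\notin Fv(p_2(y))$ the second component is untouched, producing
\[
\overline{\mut y\,\lfl V^{\mf{f}}[x:=p_1(y)],\ \wt{p_2(y)}\rfl}.
\]
Finally $V^{\mf{f}}={v^{\mf{e}}}^{\mf{f}}\raa^* T(v)$ by the induction hypothesis, so closing $\raa^*$ under the substitution $[x:=p_1(y)]$ and under the context $\overline{\mut y\lfl-,\wt{p_2(y)}\rfl}$ turns the last term into $\overline{\mut y\lfl T(v)[x:=p_1(y)],\wt{p_2(y)}\rfl}=T(\l x v)$, as required.

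I expect the $\l$-case to be the main obstacle, chiefly for bookkeeping reasons: one has to keep the fresh binder $y$ and the projection encodings $p_i(y)$ disjoint from the free variables of $V^{\mf{f}}$, check $x\notin Fv(p_2(y))$ so that the $\mut$-substitution leaves the right-hand component fixed, and verify that the induction hypothesis may be applied underneath both the substitution $[x:=p_1(y)]$ and the surrounding context. The auxiliary reduction $(\pi_i(y))^{\mf{f}}\raa^* p_i(y)$ is precisely the place where the asymmetry anticipated before the statement surfaces, and it is what forces the target to be $T(u)$ rather than $u$ itself.
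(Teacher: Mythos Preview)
Your proposal is correct and follows essentially the same route as the paper: induction on the structure of $u$, with the $\lambda$-case as the only substantive one, handled by unfolding both translations, applying a single $cl_{1,r}$-step, then a single $\mut$-step, and finishing with the induction hypothesis under the resulting context and substitution. The paper displays only the cases $u=\l x v$ and $u=\mut x\lfl t,v\rfl$, but the remaining cases you sketch are indeed routine; one small clarification is that in the paper $p_i(y)$ is simply a name for $(\pi_i(y))^{\mf{f}}$ itself, so no preliminary reduction $(\pi_i(y))^{\mf{f}}\raa^* p_i(y)$ is needed---the first displayed line in your $\lambda$-case is already an equality.
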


\begin{proof}
By induction on $cxty(u)$. We consider only some of the cases.
\begin{enumerate}

\item If $u=\l xv$, then

${u^{\mf{e}}}^\mf{f} = (\l y(\l x({\pi}_2(y)\star u^\mathfrak{e})\star {\pi}_1(y)))^\mf{f}
= \overline{\mut y\lfl p_1(y),\widetilde{\overline{\mut x\lfl {v^{\mf{e}}}^\mf{f},\widetilde{p_2(y)}\rfl}}\rfl} \raa_{cl_{1,r}}$ \\
$\overline{\mut y\lfl p_1(y),\mut x\lfl {v^{\mf{e}}}^\mf{f},\widetilde{p_2(y)}\rfl\rfl}
\raa_{\mut} \overline{\mut y\lfl {v^{\mf{e}}}^\mf{f}[x:=p_1(y)],\widetilde{p_2(y)}[x:=p_1(y)]\rfl}
\raa^* T(u)$.\medskip

\item If $u=\mut x\lfl t,v\rfl$, then

${u^{\mf{e}}}^\mf{f} = (\l x(v^\mf{e}\star t^\mf{e}))^\mf{f}
=\overline{\mut x\lfl {t^\mf{e}}^\mf{f},\widetilde{{v^\mf{e}}^\mf{f}}\rfl}
\raa^* \overline{\mut x\lfl T(t),\widetilde{T(v)}\rfl}
=\overline{\mut xT(\lfl t,v\rfl)}=T(u)$.\qedhere
\end{enumerate}
\end{proof}

\begin{rem}
We remark that we cannot expect $T(u)$ to be expressible with the help of $\mf{T}$. Namely, we can show that, if
\smash{$=_{\tiny{\lmts}}$} denotes the reflexive, transitive closure of the compatible union of the reduction relations in
the $\lmts$-calculus, then none of the assertions below are valid.
\begin{enumerate}

\item There exists a a $\lmts$-term $\Phi$ such that, for every $c$-term $c$, $T(c)=_{\tiny{\lmts}}\Phi(c)$.
\item There exists a a $\lmts$-term $\Phi_1$ such that, for every $l$-term $t$, $T(t)=_{\tiny{\lmts}}\Phi_1(t)$.
\item There exists a a $\lmts$-term $\Phi_2$ such that, for every $r$-term $e$, $T(e)=_{\tiny{\lmts}}\Phi_2(e)$.
\end{enumerate}
\end{rem}

\section{Conclusion}

The paper is mainly devoted to an arithmetical proof of the strong normalization of the \smash{$\ls$}-calculus introduced by
Berardi and Barbanera \cite{Ber-Bar}. The proof is an adaptation of the work of David and Nour \cite{Dav-Nou3}. The novelty of our
 paper is the application of the method of zoom-in sequences of redexes: we achieve the main theorem by identifying the
minimal non-strongly normalizing redexes of an infinite reduction sequence, which we call a zoom-in sequence of redexes.
The idea of zoom-in sequences was inspired by the notion of perpetual reduction strategies introduced by Raamsdonk et al. \cite{Sor}. Following the proof of the strong normalization of the \smash{$\ls$}-calculus, the $\lmt$-calculus is introduced, which was defined
by Curien and Herbelin \cite{Cur-Her}. The same proof of strong normalization as we have presented for the \smash{$\ls$}-calculus would also
work for the calculus of Curien and Herbelin as was shown by Batty\'anyi \cite{Batt}. However, instead of adapting the proof method for the
$\lmt$-calculus, we designed a translation of the \smash{$\ls$}-calculus in the $\lmts$-calculus and vice versa,
where the $\lmts$-calculus is the $\lmt$-calculus augmented with terms explicitly expressing negation and with
rules handling them. The translation allows us to assert strong normalization for the $\lmts$- and, hence, for the $\lmt$-calculus.

On the technical side, we remark that there were two main difficulties that rendered the proof a little more involved. First,
we had to work with an alternating substitution defined inductively starting from two sets of terms. The reason was that we had
to prove a more general statement to locate the supposedly non strongly normalizing part of a term emerging as a result of a
substitution. Simple substitutions would not have been enough for our purpose. The second difficulty was that in order
to establish a key property of zoom-in sequences in Lemma \ref{ch3:zoom} we had to move forward the Hypothesis ``H'' from the
main theorem, thus making the application of the hypothesis implicit in the sequel. We think that the elimination of
both problems would considerably enhance the paper's intelligibility.

It seems promising to investigate whether the present method of verifying strong normalization can be applied to systems other
than simple typed logical calculi, for example, proof nets (Laurent \cite{Lau}). Another fields of interest could be intuitionistic
and classical typed systems with explicit substitutions (Rose \cite{Ros}). To handle these systems, the present proof must be
simplified, we have to pay attention in our proof, for example, that the substitutions are defined by two sets of terms of
different types. Finally, we remark that it is a natural requirement of a proof formalizable in first order arithmetic to
enable us to find an upper bound for the lengths of the reduction sequences. At its present form, our proof does not make
it possible, this raises another demand for the simplification of the results.

\section*{Acknowledgment}
  \noindent We wish to thank Ren\'e David and the anonymous referees for helpful discussions and remarks.


\end{document}